\numberwithin{equation}{section}
\newcommand{\bbN}{{\ensuremath{\mathbb N}} }
\newcommand{\bbP}{{\ensuremath{\mathbb P}} }
\newcommand{\bbR}{{\ensuremath{\mathbb R}} }
\newcommand{\bbZ}{{\ensuremath{\mathbb Z}} }
\newcommand{\gb}{\beta}
\newcommand{\gep}{\varepsilon}       
\newcommand{\gG}{\Gamma}
\newcommand{\gO}{\Omega}
\newcommand{\gl}{\lambda}
\newcommand{\gs}{\sigma}
\newcommand{\tf}{\textsc{f}}
\newcommand{\cA}{{\ensuremath{\mathcal A}} }
\newcommand{\cE}{{\ensuremath{\mathcal E}} }
\newcommand{\cH}{{\ensuremath{\mathcal H}} }
\newcommand{\cL}{{\ensuremath{\mathcal L}} }
\newcommand{\cU}{{\ensuremath{\mathcal U}} }
\newcommand{\cZ}{{\ensuremath{\mathcal Z}} }
\newcommand{\bP}{{\ensuremath{\mathbf P}} }
\newcommand{\bQ}{{\ensuremath{\mathbf Q}} }
\newcommand{\bE}{{\ensuremath{\mathbf E}} }
\newcommand{\bZ}{{\ensuremath{\mathbf Z}} }
\newcommand{\ind}{\mathbf{1}}
\newcommand{\lint}{\llbracket}
\newcommand{\rint}{\rrbracket}
\newcommand{\intp}[1]{\lint #1 \rint}
\definecolor{darkred}{rgb}{0.7,0.1,0.1}
\definecolor{darkgreen}{rgb}{0.1,0.7,0.1}
\newtheorem{theorem}{Theorem}[section]
\newtheorem{lemma}[theorem]{Lemma}
\newtheorem{proposition}[theorem]{Proposition}
\newtheorem{rem}[theorem]{Remark}
\newcommand{\RN}[1]{%
  \textup{\uppercase\expandafter{\romannumeral#1}}%
}
\newcommand{\Var}{\mathrm{Var}}
\newcommand{\Gap}{\mathrm{gap}}
\newcommand{\Rel}{\mathrm{rel}}
\newcommand{\dd}{\mathrm{d}}
\newcommand{\TV}{\mathrm{TV}}
\newcommand{\Mix}{\mathrm{mix}}
\renewcommand{\tilde}{\widetilde}
\renewcommand{\hat}{\widehat}
\newcommand{\maxtwo}[2]{\max_{\substack{#1 \\ #2}}} 
\newcommand{\suptwo}[2]{\sup_{\substack{#1 \\ #2}}} 
\newcommand{\sumtwo}[2]{\sum_{\substack{#1 \\ #2}}} 
\newcommand{\sumthree}[3]{\sum_{\substack{#1 \\ #2 \\ #3}}} 
\def\captionfont@{\footnotesize}
\def\captionheadfont@{\scshape}
\long\def\@makecaption#1#2{%
  \vspace{2mm}
  \setbox\@tempboxa\vbox{\color@setgroup
    \advance\hsize-6pc\noindent
    \captionfont@\captionheadfont@#1\@xp\@ifnotempty\@xp
        {\@cdr#2\@nil}{.\captionfont@\upshape\enspace#2}%
    \unskip\kern-6pc\par
    \global\setbox\@ne\lastbox\color@endgroup}%
  \ifhbox\@ne 
    \setbox\@ne\hbox{\unhbox\@ne\unskip\unskip\unpenalty\unkern}%
  \fi
  \ifdim\wd\@tempboxa=\z@ 
    \setbox\@ne\hbox to\columnwidth{\hss\kern-6pc\box\@ne\hss}%
  \else 
    \setbox\@ne\vbox{\unvbox\@tempboxa\parskip\z@skip
        \noindent\unhbox\@ne\advance\hsize-6pc\par}%
\fi
  \ifnum\@tempcnta<64 
    \addvspace\abovecaptionskip
    \moveright 3pc\box\@ne
  \else 
    \moveright 3pc\box\@ne
    \nobreak
    \vskip\belowcaptionskip
  \fi
\relax
}
\def\writefig#1 #2 #3 {\rlap{\kern #1 truecm
\raise #2 truecm \hbox{#3}}}
\title[Metastability for expanding bubbles on a sticky substrate]{Metastability for expanding bubbles on a sticky substrate}
\author[Hubert Lacoin]{Hubert Lacoin}
 \address{Hubert Lacoin \hfill\break
IMPA\\
Estrada Dona Castorina, 110\\ Rio de Janeiro 22460-320 \\ Brazil.}
\email{lacoin@impa.br}
\author[Shangjie Yang]{Shangjie Yang}
 \address{Shangjie Yang \hfill\break
IMPA\\
Estrada Dona Castorina, 110\\ Rio de Janeiro 22460-320 \\ Brazil.}
\email{yashjie@impa.br}
\keywords{Markov chains, partition function, spectral gap, metastability.\\\textit{AMS subject classification}: 60K35, 82C20, 82C24}
\begin{document}

\setcounter{tocdepth}{1}


\begin{abstract}
We study the dynamical behavior of a one dimensional interface interacting with a sticky unpenetrable substrate or wall. The interface is subject to two effects going in opposite directions.
Contact between the interface and the substrate are given an energetic bonus while an external force with constant intensity pulls the interface away from the wall. 
Our interface is modeled by the graph of a  one-dimensional nearest-neighbor path on $\mathbb{Z}_+$, starting at $0$ and ending at $0$ after $2N$ steps, the wall corresponding to level-zero the horizontal axis.
At equilibrium
each path $\xi=(\xi_x)_{x=0}^{2N}$, is given a probability proportional to $\lambda^{H(\xi)} \exp(\frac{\sigma}{N}A(\xi))$, 
 where $H(\xi):=\#\{x \ : \xi_x=0\}$ and $A(\xi)$ is the area enclosed between the path $\xi$ and the $x$-axis. We then consider the classical heat-bath dynamics which equilibrates the value of each $\xi_x$ at a constant rate via corner-flip.

Investigating the statics of the model, we derive the full phase diagram in $\gl$ and $\sigma$ of this model, and identify the critical line which separates a localized phase where the pinning force sticks the interface to the wall and a delocalized one, for which the external force stabilizes $\xi$ around a deterministic shape at a macroscopic distance of the wall.
On the dynamical side, we identify a second critical line, which separates a rapidly mixing phase (for which the system mixes in polynomial time) to a slow phase where the mixing time grows exponentially. In this slowly mixing regime we obtain a sharp estimate of the mixing time on the $\log$ scale, and provide evidences of a metastable behavior.
\end{abstract}

\maketitle

\tableofcontents

\section{Introduction}\label{sec:intro}

The present manuscript investigates the dynamical behavior for a discrete interface model in the vicinity of an impenetrable substrate or wall. We assume that the interface is subject to:
\begin{itemize}
 \item [(A)] An interaction with the wall, modelized by an energetic reward or penalty for each contact.
 \item [(B)] An homogeneous external force field, which drives away the interface from the wall which translates into adding a potential energy proportional to the interface heights.
\end{itemize}
\begin{figure}[h]
\centering
  \begin{tikzpicture}[scale=.25,font=\tiny]
    \draw[fill, color=gray!60!white] (0,0)-- (16,0) -- (16,-3) -- (0,-3) -- (0,0);     
    \filldraw[black, thick] (1,0) circle (6pt);
    \filldraw[black, thick] (15,0) circle (6pt);     
     \draw [blue, thick,rounded corners] (1,0)--(2,1)-- (3,0)--(4,0)--(5,0)-- (6,1) --(7,-0.3)--(8,1)--(9,-0.3)--(10,1)--  (11,0)--(12,0)--(13,0)--(14,1)--(15,0);
   \node at ( -2,-1)[black,thick] {wall};
   \node at ( 7,2.4)[blue,thick] {interface};
   \node at ( 21,5)[thick] {Localization/Delocalization transition};

   \draw[fill, color=gray!60!white] (30,0) -- (46,0) -- (46,-3) -- (30,-3) -- (30,0);       
    \filldraw[black, thick] (31,0) circle (6pt);
    \filldraw[black, thick] (45,0) circle (6pt);
    \draw [blue, thick,rounded corners] (31,0)--(32,1)--(33,-0.3)--(34,1)--(35,2)--(36,3)--(37,2)--(38,3)--(39,4)--(40,3)--(41,2)--(42,3)--(43,2)--(44,1)--(45,0);
   
   \node at ( 28,-1)[black,thick] {wall};
   \node at ( 38,4.6)[blue,thick] {interface};

   \draw [black](18,3) edge[out=15,in=165,->] (28,3);
    
  \end{tikzpicture}
  \caption{The typical behavior of the interface changes when the external force field passes a certain threshold from a localized phase to a delocalized phase.}
  \label{fig:pinbubble}
\end{figure}
We want to understand in depth how these two competing effects can affect the mixing properties of the system.
We consider the simplest possible setup. Our interface is modeled by the graph of a one dimensional simple random walk, with a configuration space given by
\begin{equation*}
\Omega_N \colonequals \Big\{\xi\in \mathbb{Z}_+^{2N+1} \ : \   \xi_0=\xi_{2N}=0 \ ;  \forall x \in \lint 1, 2N \rint, \vert \xi_{x}-\xi_{x-1}\vert=1 \Big\}.
\end{equation*}
We are going to consider a reversible Markov chain  $(\eta_t)_t\ge 0$ on $\gO_N$ with transition rules which reflect the two driving forces described in $(A)$ and $(B)$
(see \eqref{jumprate} below for an explicit description of the Markov chain and \eqref{defmun} for the corresponding reversible probability).

\medskip

The study of effective interface models is a large field of study both in mathematics and physics.
The problem of wetting of a random walk (which is the study of effect (A) alone) dates back to the seminal paper of \cite{fisher1984walks}. Several variants and generalizations of the model have been considered since (with a particular interest for the disordered model see 
  \cite{GiacominPolymerLNM, GiacominPolymerbk} for a review).
Interest in the dynamics associated to this model and its mixing properties came later \cite{caputo2008approach, caputo2012polymer, yang2019cutoff}.
  
\medskip  

Interfaces subjected to an external force (effect (B)), on the other hand, have been studied in an infinite volume, both because it is a natural model for growth and because of its connection with the asymetric simple exclusion process, mostly in the infinite volume setup (see e.g.\ \cite{Rost81,rez91, Demasi89, GART88} for early references dealing with hydrodynamics  with total, partial and weak asymetry).
The model on the segment is slightly different, since in particular the boundary condition makes the dynamics reversible, and its static and dynamical properties were investigated \cite{benjamini2005mixing, LabbeWABridge, labbe2016cutoff, labbe2018mixing, LevPer16} (see also \cite{gns2020, schmid2019mixing} for variants with open boundaries and random environment).

\medskip

As can be seen in the above references, under the effect of $(A)$ or $(B)$ alone, the system mixes fast. By this we mean that the mixing time (whose definition is recalled in Section \ref{sec:modres} below) grows only like a power of the size of the system.

\medskip

In the paper, we show that this state of fact changes dramatically when $(A)$ and $(B)$ are combined, at least for some choices of parameters.
To take full advantage of the effect $(A)$ or $(B)$, the interface must adopt two very different strategies.
To get the best of the energetic bonus awarded for contacts with the wall, the interface wants to locally optimize the contact fraction which implies staying very close to the wall (see   \cite[Theorem 2.4]{GiacominPolymerbk} and Figure \ref{fig:macroshape}). On the other hand the pulling force, when considered alone, makes the interface stabilize around a macroscopic profile which optimizes the competition between the energetic reward given by the pulling force field and the large deviation cost for the one dimensional random walk (see  \cite[Theorem 4]{LabbeWABridge} and Figure \ref{fig:macroshape}). When both the attraction to the wall and the external field are turned on, there is no efficient way to combine the two above strategies. As a result the equilibrium state of the system is simply determined by comparing which of the two effects is dominant. In particular we have an abrupt phase transition when the external field grows, from a localized phase where the interface sticks to the wall, to a delocalized one, where the interface is repelled at a macroscopic distance away from  it. 
As a first result in our paper, we give a detailed description of the equilibrium phase diagram of the system, which includes the identification of the free-energy and a description of the interface behavior on the critical line.  

\medskip

The more important contribution is the study of the dynamics. We establish that depending on the value of the parameters which tune the intensity of effects $(A)$ and $(B)$ the system either mixes in polynomial time or takes an exponential time to reach its equilibrium state. We also identify the critical line which separates the slow and fast mixing phases, which does not coincide with the line delimiting the static phase transition. 
We will show that when the wall is attractive and the external force is sufficiently large, then the mixing time becomes exponentially large in the size of the system.
Moreover we identify the critical line which separates the fast-mixing regime from the slow-mixing regime, which differs from the one appearing on the equilibrium phase diagram.

\medskip

The slow mixing phase displays a metastable behavior. In that regime, the two strategies which maximize the benefits of contact with the wall and the external force field respectively correspond heuristically two distinct local equilibrium states for the dynamics. The mixing time then corresponds to the typical time needed to travel from the thermodynamically  less favorable state (corresponding the the less beneficial strategy)
to the point of equilibrium. We prove that properly rescaled, the traveling time for leaving the thermodynamically unstable local equilibrium rescales to an exponential random variable.

\medskip

This metastable picture is present in many systems of statistical mechanics and has been the object of an extensive mathematical attention in the past two decades (see  \cite{beltran2015martingale, bovier2016metastability} and references therein).
In the specific realm of pinning model, our picture is reminiscent of the Cassie-Baxter/Wenzel transition observed for wetting of irregular substrate (see \cite{giacomello2012} and references therein for a review and studies of the phenomenon and \cite{de2011metastable, lacoin2015mathematical} for the mathematical treatement of a simplified model accounting for it).

\section{Model and results}\label{sec:modres}

\subsection{The setup}

\subsubsection{The static model}
Let us now introduce a simple statistical mechanics model which combines the substrate interaction and the external force-field effect.
Consider the set of nonnegative integer-valued one-dimensional nearest-neighbor paths   which start at $0$ and end at $0$ after $2N$ steps, that is
 \begin{equation}
\Omega_N \colonequals \Big\{\xi\in \mathbb{Z}_+^{2N+1} \ : \   \xi_0=\xi_{2N}=0 \ ;  \forall x \in \lint 1, 2N \rint, \vert \xi_{x}-\xi_{x-1}\vert=1 \Big\}, \label{defomegaN}
\end{equation}
where $N\in \mathbb{N}$, and $\intp{a, b}\colonequals[a, b] \cap \mathbb{Z}$ for $a, b \in \mathbb{R}$ with $a<b$.
For $\xi\in \Omega_N$, we  denote by $H$ and $A$ respectively the number of zeros and the (algebraic) area between the path and the horizontal axis
\begin{equation*}
H(\xi)\colonequals\sum_{x=1}^{2N-1} \ind_{\{\xi_{x}=0\}} \quad  \text{ and } \quad   A(\xi)\colonequals\sum_{x=1}^{2N}\xi_x. 
\end{equation*}
 We define a probability measure on $\gO_N$ using a Gibbs weight constructed from an Hamiltonian which is the sum of two terms, one proportional to the area and another one proportional to the number of contacts. We rescale the area by a factor $N$ so that these two effects play on the same scale. Given $\gl \ge 0$ and $\sigma\in \bbR$,
we define $\mu_N^{\lambda, \sigma}$ on $\Omega_N$ by
\begin{equation}\label{defmun}
\mu_N^{\lambda, \sigma}(\xi) \colonequals \frac{2^{-2N} \lambda^{H(\xi)} \exp\left({\tfrac{\sigma}{N}A(\xi)}\right)}{Z_N(\lambda, \sigma)} 
\end{equation}
where $Z_N(\lambda, \sigma)$ is the partition function, given by
\begin{equation}
Z_N(\lambda, \sigma)\colonequals 2^{-2N} \sum_{\xi' \in \Omega_N} \lambda^{H(\xi')} \exp\left(\tfrac{\sigma}{N} A(\xi')\right).
\end{equation}
By convention,  $0^0 \colonequals 1$ and $0^k \colonequals 0$ for any positive integer $k\geq 1$.
 The factor $2^{-2N}$ is irrelevant for the definition of $\mu_N^{\lambda, \sigma}$ but is convenient for the partition function. 
When it is clear  from the context, we omit the indices $\lambda$ and $\sigma$ in $\mu_N^{\lambda, \sigma}$. 
The graph of $\xi$ depicts the spatial configuration of an interface ( see Figure \ref{fig:jumprates}).

\subsubsection{The dynamics}
The object of this paper is to investigate the relaxation property of the Glauber dynamics associated with the equilibrium measure 
$\mu_N^{\lambda, \sigma}$. This is
a continuous-time reversible Markov chain on $\Omega_N$,   which proceeds by flipping the corners in the path
  $\xi \in \Omega_N$.
For $\xi \in \Omega_N$ and $x \in \lint 1, 2N-1\rint$,  we define $\xi^x$ by 
\begin{equation}\label{flippingcorner}
\xi^x_y\colonequals\begin{cases*}
\xi_y &if $y\neq x$,\\
(\xi_{x-1}+\xi_{x+1})-\xi_x & if $y=x \mbox{ and }  \xi_{x-1}=\xi_{x+1}\geq 1 \mbox{ or } \xi_{x-1}\neq \xi_{x+1}$,\\
\xi_x  & if  $y=x \mbox{ and } \xi_{x-1}=\xi_{x+1}=0$.
\end{cases*}
\end{equation}
In other words, if $\xi_{x-1}=\xi_{x+1}$, $\xi$ presents a local extremum at $x$ and $\xi^x$ is obtained by flipping the corner at the coordinate $x$ provided that $\xi^x\in \Omega_N$  (see Figure \ref {fig:jumprates}).
The rates at which each corner is flipped is specified by the following rates 
\begin{equation}\label{jumprate}
    r_N(\xi, \xi^x)\colonequals  \begin{cases*}
      \frac{\exp({\frac{2\sigma}{N}})}{1+\exp({\frac{2\sigma}{N}})}  & if $\xi_{x-1}=\xi_{x+1}>\xi_x\geq 1$, \\
       \frac{1}{1+\exp({\frac{2\sigma}{N}})}  & if $\xi_x>\xi_{x-1}=\xi_{x+1}> 1$, \\
       \frac{\lambda}{\lambda+\exp({\frac{2\sigma}{N}})}        &  if $(\xi_{x-1}, \xi_x, \xi_{x+1})=(1, 2, 1) $,\\
       \frac{\exp({\frac{2\sigma}{N}})}{\lambda+\exp({\frac{2\sigma}{N}})}        &  if $(\xi_{x-1}, \xi_x, \xi_{x+1})=(1, 0, 1) $,\\
       0 & if $\xi_{x-1}\neq \xi_{x+1}$ or $\xi_{x-1}=\xi_{x+1}=0$.
    \end{cases*}
\end{equation}
 The other transition rates $r_N(\xi,\xi')$ when $\xi$ is not one of the $\xi^x$s are equal to zero.
The generator $\mathcal{L}_N$ of the Markov chain is thus given
(for $f: \Omega_N \to \bbR $) by
\begin{equation}
(\mathcal{L}_N f)(\xi) \colonequals \sum_{\xi'\in \gO_N} r_N(\xi,\xi') \big[f(\xi') -f(\xi) \big]=  \sum_{x=1}^{2N-1} r_N(\xi, \xi^x) \big[f(\xi^x) -f(\xi) \big].  \label{generator}
\end{equation}

\begin{figure}[h]
 \centering
   \begin{tikzpicture}[scale=.4,font=\tiny]
    \draw (25,4) --(25,-2);  \draw (25,-1) -- (52,-1);
     \draw[color=blue] (25,-1)--(26,0) -- (27,-1) -- (28,0) -- (29,-1) -- (30,0) -- (31,1) -- (32,0) -- (33,1) -- (34,0) -- (35,1) -- (36,2) -- (37,1) -- (38,0) -- (39,1) -- (40,0) -- (41,1) -- (42,0) -- (43,-1) -- (44,0) -- (45,-1) -- (46,0) -- (47,1) -- (48,2) -- (49,1) -- (50,0)--(51, -1);
     \foreach \x in {25,...,51} {\draw (\x,-1.3) -- (\x,-1);}
     \draw[fill] (25,-1) circle [radius=0.1];
     \draw[fill] (51,-1) circle [radius=0.1];   
     \node[left] at (25,-1) {$0$};
     \node[below] at (51,-1.3) {$2N$};
      \draw[dashed] (33,1) -- (34,2) -- (35,1);
      \node[below] at (34.5, 4.2) {$  \dfrac{\exp({\frac{2\sigma}{N}})}{1+\exp({\frac{2\sigma}{N}})} $};
      
       \draw (34, 0.3) edge[out=70, in=290, ->] (34, 1.8);
    \draw[red, dashed](25,-1)--(26,-2) -- (27,-1);
    \draw (26,-0.2) edge[out=290, in=70, ->] (26,-1.8);
      \node[right] at (25.8,-1.3) {$\times$};
      
     \draw[dashed] (40,0) -- (41,-1) -- (42,0);
     \draw (41,.8) edge[out=290, in=70, ->] (41,-.8);
     \node[above] at (42,.5) {$ \dfrac{\lambda}{\lambda+\exp(\frac{2\sigma}{N})} $};
     
     \draw[dashed] (47,1) -- (48,0) -- (49,1);
     \draw (48,1.8) edge[out=290, in=70, ->] (48,0.2);
     \node[right] at (48, 2.2) {$  \dfrac{1}{1+\exp({\frac{2\sigma}{N}})} $};
     \node[blue,above] at (38, 1.8){$\xi$};
     
 \draw[dashed](28,0) -- (29,1) -- (30,0);    
    \draw (29,-0.8) edge[out=70, in=290, ->] (29,0.8);
   \node[above] at (29,1) {$ \dfrac{\exp({\frac{2\sigma}{N}})}{\lambda+\exp({\frac{2\sigma}{N}})}  $};
       
     \draw[thick,->] (25,-2) -- (25,4) node[anchor=north west]{y};
     \draw[thick,->] (25,-1) -- (52,-1) node[anchor=north west]{x};
   \end{tikzpicture}
   \caption{\label{fig:jumprates}  A graphical representation of the jump rates for the system. A transition of the chain corresponds to flipping a corner, whose rate is chosen such that the chain is reversible with respect to $\mu_N^{\lambda, \sigma}$. The red dashed corner is not available, due to the nonnegative restriction of the state space $\Omega_N$.  Note that not all of the possible transitions are shown in the figure.}
 \end{figure}

An interpretation of $\cL_N$ is that for each $x$, the coordinate $\xi_x$ is   resampled
 with respect to the conditional equilibrium measure  $\mu_N \left( \cdot  \ | \   (\xi_y)_ {y \neq x}\right)$.   Indeed
the generator can be rewritten as 
\begin{equation*}
(\mathcal{L}_Nf)(\xi)= \sum_{x=1}^{2N-1} \Big[Q_x(f)(\xi)-f(\xi)\Big], 
\end{equation*}
 where $Q_x$ is the following operator 
\begin{equation*}
Q_x(f)(\xi)\colonequals\mu_N \left(f(\xi) \ | \   (\xi_y)_ {y \neq x}\right).
 \end{equation*}
 Here and in what follows $\nu(f)$ is used to denote the expectation of $f$ with respect to $\nu$ and similar convention is used for conditional expectation.
The chain is irreducible, 
and since the rates $r_N$ satisfy the detailed balance condition for the measure $\mu_N$, it is also reversible.
We are interested in the speed relaxation to equilibrium of the above dynamics which is encoded by the spectral gap of the generator $\cL_N$.
In our context the spectral gap  can be defined as the minimal positive eigenvalue of $-\cL_N$.
It can be characterized using the Dirichlet form associated with the dynamic defined by
\begin{equation*}
\mathcal{E}(f) \colonequals -\langle f, \mathcal{L}_Nf\rangle_{\mu_N}=\sum_{x=1}^{2N-1}\mu_N \left((Q_xf-f)^2\right),
\end{equation*}
where $\langle f, g \rangle_{{\mu_N}} \colonequals \sum_{\xi \in \Omega_N} \mu_N(\xi)f(\xi)g(\xi)$ denotes the usual inner-product in $L^2(\mu_N)$. Moreover, the spectral gap, denoted by $\Gap_N(\lambda,\sigma)$, is the minimal positive eigenvalue of $-\mathcal{L}_N$ and the relaxation time 
is its inverse. That is
\begin{equation}
T_{\Rel}^N(\lambda, \sigma) \colonequals \sup_{f \ : \ \Var_{\mu_N}(f)>0}\frac{\Var_{\mu_N}(f)}{\mathcal{E}(f)}=\Gap_N^{-1}(\lambda, \sigma),\label{defRelTime}
\end{equation}  
where $\Var_{\mu_N}(f)\colonequals \langle f, f\rangle_{\mu_N}-\langle f, 1 \rangle^2_{\mu_N}$.

\subsection{Equilibrium results}\label{sec:equili}

While our main result concerns the dynamics, our first task is to understand the properties  of the model at equilibrium, and in particular the asymptotic behavior of the partition function. Our result is obtained via comparison with two previously studied models.

\subsubsection{The Random walk pinning model}
The case $\sigma=0$ is very well understood, since in that case the model is the classical random walk pinning model in \cite{fisher1984walks}. We refer to   \cite{GiacominPolymerbk} (see also \cite{caputo2008approach, yang2019cutoff}  for studies of the dynamics). 
The model undergoes a phase transition at $\gl=2$  :  when $\gl<2$,  our random interfaces typically have a finite number of contact points with the $x-$axis and typical heights are of order $\sqrt{N}$ while when $\lambda>2$, we have a positive density of contact points with the $x-$axis and the largest height is of order $\log N$.

\medskip

\noindent This transition is encoded in the free energy of the model defined by
\begin{equation*}
F(\lambda) \colonequals \lim_{N \to \infty} \frac{1}{2N} \log Z_N(\lambda, 0).
\end{equation*}
 From \cite[Proposition 1.1]{GiacominPolymerbk} the free energy can be computed explicitely  and we have (see \cite[Equation (1.5)]{lacoin2015mathematical}),
\begin{equation}
F(\lambda)=\log \bigg(\frac{\lambda}{2\sqrt{\lambda-1}}\bigg) \ind_{\{\lambda>2\}}. \label{functionf}
\end{equation}
Furthermore  we have the following, more detailed asymptotics for the partition function (cf. \cite[Theorem 2.2]{GiacominPolymerbk}, 
\begin{equation}\label{asymppinning}
  Z_N(\lambda, 0)=\begin{cases} (1+o(1)) C_{\gl} N^{-3/2} \quad & \text{ if } \gl\in [0,2),\\
                   (1+o(1))C_{2} N^{-1/2} & \text{ if } \gl=2,\\
                   (1+o(1))C_{\gl} e^{2N F(\gl)} & \text{ if } \gl>2.
                   \end{cases}
\end{equation}
Our aim is to derive similar precise asymptotics when $\sigma>0$.

\subsubsection{The weakly asymetric simple exclusion process on the segment}

Another case for which details on the partition function have been obtained 
is that when $\gl=1$, $\sigma>0$, and no half-space constraint is given (meaning that we allow for $\xi_x<0$).
 In that case the model corresponds to the equilibrium height profile of the weakly asymmetric simple exclusion process (or WASEP) on the line segment $\lint 1, 2N \rint$ with $N$ particles. Its equilbrium properties have been investigated in details in  \cite[Section 2]{LabbeWABridge} (also with the objective of studying the dynamics) with some attention given to the asymptotic behavior the  corresponding partition function, namely 
 \begin{equation}\label{partitilde}
  \tilde Z_N(\sigma)\colonequals 2^{-2N} \sum_{\xi \in \tilde\Omega_N} \exp\left(\tfrac{\sigma}{N} A(\xi)\right),
\end{equation}
 where 
\begin{equation}
\widetilde{\Omega}_N\colonequals \Big\{\xi\in \mathbb{Z}^{2N+1} \ : \   \xi_0=\xi_{2N}=0 \ ;  \forall x \in \lint 1, 2N \rint, \vert \xi_{x}-\xi_{x-1}\vert=1  \Big\} \label{wsepspace}.
\end{equation}
In particular by  \cite[Proposition 3]{LabbeWABridge} the limit
\begin{equation*}
\lim_{N \to \infty} \frac{1}{2N} \log \tilde Z_N(\sigma)\colonequals  G(\sigma),
\end{equation*}
 exists and  is given by
\begin{equation}
G(\sigma)=\int_0^1 L\left(\sigma(1-2x)\right) \dd x   \quad  \text{ where }  \quad L(x)\colonequals\log \cosh x. \label{functiong}
\end{equation}
Furthermore we have  (from \cite[Lemma 11]{LabbeWABridge} in the case $k=1$, $\alpha=1$, see also \eqref{partfunzeropinning}-\eqref{diffapprox} below) 
\begin{equation}\label{asymparea}
 \tilde Z_N(\sigma)=(1+o(1))C_{\sigma} N^{-1/2}e^{2N G(\sigma)}.
\end{equation}

\subsubsection{The hybrid model}
In the present paper, we identify the free energy  when both pinning and area tilt are present, and identify (up to a constant) the right order  asymptotic.

\begin{proposition}\label{th:asymppf} 
We have for any $\gl\ge 0$ and $\sigma\ge 0$ 
\begin{equation}\label{lafreenz}
 \lim_{N\to \infty}  \frac{1}{2N} \log  Z_N(\gl, \sigma)= F(\gl)\vee  G(\sigma).
\end{equation}
More precisely there exists a constant $C_1(\lambda, \sigma)>0$ such that:
\begin{itemize}
\item[(1)]
If $G(\sigma)>F(\lambda)$,  then for all $N\geq 1$ we have
\begin{equation}
 \frac{1}{C_1(\lambda, \sigma)}  \leq \frac{\sqrt{N} Z_N(\lambda, \sigma)}{\exp\left(2N G(\sigma)\right)}\leq C_1(\lambda, \sigma); \label{spf}
\end{equation}
\item[(2)]
If $G(\sigma)\leq F(\lambda)$ and $\gl>2$, then for all $N\geq 1$ we have
\begin{equation}
 \frac{1}{C_1(\lambda, \sigma)}\leq  \frac{Z_N(\lambda, \sigma)}{\exp\left(2N F(\lambda)\right)}\leq C_1(\lambda, \sigma). \label{scpf}
\end{equation}
\end{itemize}

\end{proposition}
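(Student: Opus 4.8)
The plan is to obtain the upper and lower bounds for $Z_N(\lambda,\sigma)$ by comparing the hybrid model with the two reference models whose asymptotics are already known, namely the pinning model $Z_N(\lambda,0)$ and the area-tilted SOS model $\tilde Z_N(\sigma)$ on the unconstrained space $\widetilde\Omega_N$. The free energy identity \eqref{lafreenz} will follow from the sharper two-sided bounds in (1) and (2), so the bulk of the work is to prove \eqref{spf} and \eqref{scpf}.

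\medskip

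\textbf{Lower bounds.} For the lower bound in \eqref{scpf} (the case $\lambda>2$, $G(\sigma)\le F(\lambda)$), I would restrict the partition-function sum to paths that stay at height $O(1)$, so that the area term $\exp(\frac{\sigma}{N}A(\xi))$ contributes only a factor bounded above and below by constants while the pinning reward is essentially unchanged. More precisely, bounding $\exp(\frac{\sigma}{N}A(\xi)) \ge 1$ on paths with $A(\xi)\ge 0$ (which is automatic since $\xi\ge 0$) already gives $Z_N(\lambda,\sigma)\ge Z_N(\lambda,0)$, and then \eqref{asymppinning} in the regime $\lambda>2$ yields $Z_N(\lambda,\sigma)\ge (1+o(1))C_\lambda e^{2NF(\lambda)}$. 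For the lower bound in \eqref{spf}, the natural strategy is to forget the pinning reward: since $\lambda^{H(\xi)}\ge \min(1,\lambda)^{2N}$ is too lossy, instead restrict to paths with $H(\xi)=O(1)$ (few contacts), on which $\lambda^{H(\xi)}$ is bounded below by a constant, and which already carry the full area entropy. Concretely one can relate the sum over such constrained paths in $\Omega_N$ to $\tilde Z_N(\sigma)$ via a reflection/folding argument controlling the half-space constraint — using that the area-tilt favours positive excursions so that the unconstrained optimal profile is macroscopically positive and the reflection principle costs only a polynomial factor; then \eqref{asymparea} gives the claimed lower bound $c\, N^{-1/2} e^{2NG(\sigma)}$.

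\medskip

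\textbf{Upper bounds.} Here one splits $\Omega_N$ according to the number of contacts $H(\xi)$. Write $Z_N(\lambda,\sigma) = 2^{-2N}\sum_{k\ge 0}\lambda^k \sum_{\xi:\,H(\xi)=k} \exp(\frac{\sigma}{N}A(\xi))$. A path with $k$ interior contacts decomposes into $k+1$ excursions; using the convexity/subadditivity of the area functional and the known excursion asymptotics, each block of total length $\ell_i$ (with $\sum \ell_i = 2N$) contributes at most $C\,\ell_i^{-3/2} e^{\ell_i\, G_{\mathrm{exc}}(\sigma,\ell_i/(2N))}$-type terms, where one exploits that $G$ arises from the variational formula \eqref{functiong}. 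Summing the geometric-type series in $k$ and optimizing over the excursion lengths, the exponential rate is $\max(F(\lambda), G(\sigma))$ per unit $2N$, and the polynomial correction is $N^{-1/2}$ when $G(\sigma)$ wins (delocalized, one dominant excursion carrying the area) and $O(1)$ when $F(\lambda)$ wins (localized, extensive number of contacts). Splitting into the two cases $G(\sigma)>F(\lambda)$ and $G(\sigma)\le F(\lambda), \lambda>2$ then yields \eqref{spf} and \eqref{scpf} respectively.

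\medskip

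\textbf{Main obstacle.} The delicate point is the upper bound in the delocalized case \eqref{spf}: one must show that paths with a macroscopic number of contacts are genuinely suppressed and that the $N^{-1/2}$ correction — rather than $N^{-3/2}$ or $N^{-1}$ — is the correct one. This requires a precise accounting of the area functional over a path that is forced to return to zero an arbitrary number of times, i.e. a local-limit-type estimate for area-tilted bridges and excursions that is uniform in the excursion lengths, together with the observation from the variational formula for $G(\sigma)$ that inserting an extra contact (hence an extra return to $0$) strictly decreases the area-entropy balance unless the contact sits at one of the two endpoints of the macroscopic profile. Handling this uniformly — controlling the sum over the number and locations of contacts without losing or gaining spurious powers of $N$ — is where the real analytic effort lies; the rest reduces to combining \eqref{asymppinning}, \eqref{asymparea}, and elementary summation.
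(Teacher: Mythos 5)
Your outline follows the same architecture as the paper's proof --- excursion decomposition, comparison with the pinning and area--tilt reference models, and a case split on which free energy dominates --- but the two steps you defer are exactly where the proof lives, and as written the argument does not close. Consider first the lower bound in \eqref{spf}. The paper gets it from the trivial monotonicity $Z_N(\lambda,\sigma)\ge Z_N(0,\sigma)$ combined with a sharp two-sided estimate $Z_N(0,\sigma)\asymp N^{-1/2}(\sigma\vee N^{-1/2})^2e^{2NG(\sigma)}$ (Proposition \ref{th:zerocase}). Your route via $\tilde Z_N(\sigma)$ and a reflection argument ``costing only a polynomial factor'' is self-defeating: if the positivity constraint costs $N^{-p}$ with $p>0$, you obtain only $Z_N\ge cN^{-1/2-p}e^{2NG(\sigma)}$, which proves \eqref{lafreenz} but not the matching $N^{-1/2}$ order in \eqref{spf}. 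The actual content is that for fixed $\sigma>0$ the half-space constraint costs only a \emph{constant}: under the area-tilted bridge measure the walk stays strictly positive with probability of the same order $\sigma^2N^{-1/2}$ as the return probability itself. The paper proves this via FKG, the ballot theorem and a second-moment computation (Proposition \ref{th:lemnun}); nothing in your sketch supplies an equivalent estimate.

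Second, the upper bound. You correctly identify that one must sum over the number and locations of contacts without losing powers of $N$, but you label this the main obstacle rather than resolving it. The paper's mechanism is a renewal reformulation: setting $\tilde K(n)=\lambda e^{-2nF(\lambda)}Z_n(0,n\sigma/N)$, the exact identity $\sum_n\lambda K(n)e^{-2nF(\lambda)}=1$ (which defines $F$) together with the uniform bound on $Z_n(0,\cdot)$ from Proposition \ref{th:zerocase} yields $\sum_{n\le N}\tilde K(n)\le 1+C/N$, so the full combinatorial sum over excursion decompositions is bounded by $(\sum_n\tilde K(n))^N\,\hat\bP(N\in\tau)\le e^{C}$ and no entropy is lost. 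In the delocalized case the same device is applied after isolating the $O(1)$ excursions of macroscopic length; the $N^{-1/2}$ comes from the term with exactly one macroscopic excursion, while zero or at least two such excursions are shown to be exponentially subdominant by convexity of $u\mapsto uG(\sigma u)+(1-u)F(\lambda)$. Without this (or an equivalent) device, ``summing the geometric-type series in $k$ and optimizing over excursion lengths'' does not control the exponentially many compositions of $N$, and the claimed exponential rate $F(\lambda)\vee G(\sigma)$ --- let alone the polynomial prefactors --- does not follow.
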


 The above result confirms that the two effect of area tilt and pinning do not combine and that only the stronger of the two (which is determined by the comparison of $F(\gl)$ and  $G(\sigma)$) prevails.
 In the case of a tie between $F(\gl)$ and $G(\sigma)$, the estimates \eqref{spf}-\eqref{scpf} entails that the pinning has a stronger effect. This is illustrated in Theorem \ref{th:shapeandcontact1} below.

\begin{rem}
 In the result above, we do not identify the asymptotic equivalent of the partition function in \eqref{spf}-\eqref{scpf} and leave unmatching constants for the upper and lower bounds. This is mostly to avoid lengthier computation and because the estimates\eqref{spf}-\eqref{scpf} are sufficient to prove our results about the dynamics.
 \end{rem}
 
 \begin{rem}
 We excluded the case $\sigma<0$ from the analysis.  Little efforts would be necessary to show that we have in that case also 
 \begin{equation}
 \lim_{N\to \infty}  \frac{1}{2N} \log  Z_N(\gl, \sigma)= F(\gl),
\end{equation}
and that \eqref{scpf} also holds. The case $\gl<2$ and $\sigma<0$ should correspond to a different regime where
\begin{equation}
  - C_1(\gl,\sigma) N^{1/3} \le  \log  Z_N(\gl, \sigma) \le - \frac{1}{C_1(\gl,\sigma)} N^{1/3}.
\end{equation}
This is reminiscent of the behavior observe in \cite{ferrari2005} for a Brownian motion in presence of a curved barrier (see also references therein for numerous occurences of $N^{1/3}$ fluctuation). This is in any case out of the focus of this paper.
\end{rem}
\noindent The information we gathered about the partition function allows for a detailed description the typical behavior of $\xi$ under $\mu^{\gl,\sigma}_N$.
Let us define
\begin{equation}
M_{\sigma}(u):= \int^u_0 \tanh ( \sigma(1-x) ) \dd x= \frac{1}{\sigma} \log \left(\frac{ \cosh (\sigma)}{  \cosh (\sigma(1-u))}\right). 
\end{equation}

\begin{figure}[ht]
\centering
  \begin{tikzpicture}[scale=.05,font=\tiny]
    \draw[<->] (0,33) -- (0,0) -- (158,0);
    
    \node[below] at (0,0) {$0$};
    \node[below] at (150,0) {$2N$};
    \draw[<->] (0,69) -- (0,39) -- (158,39);
    \draw[black,fill] (150,39) circle (0.5cm);
    \draw[black,fill] (150,0) circle (0.5cm);
    \draw[black,thick] (-.5,50)--(.6,50);
    \node[left] at (0,50){$C \log N$};
    \draw[blue](0,39)--(1,40)--(2,39)--(3,40)--(4,39)--(5,40)--(6,41)--(7,42)--(8,41)--(9,42)--(10,43)--(11,42)--(12,43)--(13,44)--(14,45)--(15,46)--(16,47)--(17,46)--(18,45)--(19,44)--(20,43)--(21,42)--(22,43)--(23,44)--(24,43)--(25,42)--(26,41)--(27,40)--(28,41)--(29,40)--(30,39)--(31,40)--(32,41)--(33,40)--(34,39)--(35,40)--(36,41)--(37,42)--(38,43)--(39,42)--(40,41)--(41,40)--(42,39)--(43,40)--(44,41)--(45,42)--(46,41)--(47,40)--(48,41)--(49,40)--(50,39)--(51,40)--(52,41)--(53,40)--(54,39)--(55,40)--(56,41)--(57,42)--(58,43)--(59,44)--(60,45)--(61,44)--(62,43)--(63,44)--(64,43)--(65,42)--(66,41)--(67,40)--(68,39)--(69,40)--(70,41)--(71,42)--(72,41)--(73,40)--(74,39)--(75,40)--(76,41)--(77,42)--(78,41)--(79,40)--(80,41)--(81,40)--(82,39)--(83,40)--(84,41)--(85,42)--(86,43)--(87,42)--(88,41)--(89,40)--(90,41)--(91,40)--(92,39)--(93,40)--(94,41)--(95,42)--(96,43)--(97,44)--(98,45)--(99,46)--(100,45)--(101,44)--(102,45)--(103,44)--(104,43)--(105,44)--(106,43)--(107,42)--(108,41)--(109,40)--(110,39)--(111,40)--(112,41)--(113,42)--(114,41)--(115,40)--(116,39)--(117,40)--(118,39)--(119,40)--(120,41)--(121,42)--(122,41)--(123,40)--(124,41)--(125,40)--(126,39)--(127,40)--(128,41)--(129,40)--(130,39)--(131,40)--(132,41)--(133,40)--(134,39)--(135,40)--(136,39)--(137,40)--(138,41)--(139,42)--(140,43)--(141,42)--(142,41)--(143,40)--(144,39)--(145,40)--(146,41)--(147,40)--(148,39)--(149,40)--(150,39);
    
   \draw[black, dashed] (0,0)--(2,1.511836)--(4,3.0003507)--(6,4.4645989)--(8,5.90361485)--(10,7.31641155)--(12,8.7019826006)--(14,10.05930341970580571)--(16,11.3873)--(18,12.685)--(20,13.9513)--(22,15.1851)--(24,16.3852)--
   (26,17.5508)--(28,18.6805)--
(30,19.7734)--(32,20.8284)--(34,21.8443)--(36,22.8201)--(38,23.7547)--(40,24.6472)--(42,25.4964)--(44,26.3014)--(46,27.0613)--(48,27.775)--(50,28.4418)--(52,29.0608)--(54,29.6312)--(56,30.1522)--(58,30.6232)--(60,31.0435)--(62,31.4125)--(64,31.7298)--(66,31.9949)--(68,32.2074)--(70,32.367)--(72,32.4736)--(74,32.5269)--(76,32.5269)--
(78,32.4736)--(80,32.367)--(82,32.2074)--(84,31.9949)--(86,31.7298)--(88,31.4125)--(90,31.0435)--(92,30.6232)--(94,30.1522)--(96,29.6312)--(98,29.0608)--(100,28.4418)--(102,27.775)--(104,27.0613)--(106,26.3014)--(108,25.4964)--
(110,24.6472)--(112,23.7547)--(114,22.8201)--(116,21.8443)--(118,20.8284)--(120,19.7734)--(122,18.6805)--(124,17.5508)--(126,16.3852)--(128,15.1851)--(130,13.9513)--(132,12.685)--(134,11.3873)--(136,10.0593)--(138,8.70198)--(140,7.31641)--
(142,5.90361)--(144,4.4646)--(146,3.00035)--(148,1.51184)--(150,0);

\draw[blue](0,0)--(1,1)--(2,0)--(3,1)--(4,2)
--(5,3)--(6,4)--(7,5)--(8,6)--(9,7)--(10,8)--(11,9)--(12,10)--(13,9)--(14,10)--(15,9)--(16,10)--(17,11)--(18,12)--(19,13)--(20,14)--(21,15)--(22,16)--(23,17)--(24,16)--(25,15)--(26,16)--(27,17)--(28,18)--(29,19)--(30,20)--(31,21)--(32,20)--(33,21)--(34,22)--(35,23)--(36,22)--(37,23)--(38,24)--(39,25)--(40,24)--(41,25)--(42,24)--(43,25)--(44,26)--(45,27)--(46,28)--(47,29)--(48,28)--(49,27)--(50,28)--(51,27)--(52,28)--(53,29)--(54,30)--(55,29)--(56,30)--(57,31)--(58,32)--
(59,31)--(60,32)--(61,31)--(62,32)--(63,31)--(64,30)--(65,31)--(66,32)--(67,33)--(68,34)--(69,35)--(70,36)--(71,35)--(72,34)--(73,33)--(74,32)--(75,31)--(76,30)--(77,31)--(78,32)--(79,33)--(80,32)--(81,33)--(82,32)--(83,33)--(84,32)--
(85,31)--(86,32)--(87,31)--(88,30)--(89,31)--(90,32)--(91,31)--(92,30)--(93,29)--(94,28)--(95,29)--(96,30)--(97,31)--
(98,30)--(99,31)--(100,30)--(101,31)--(102,30)--(103,29)--(104,28)--(105,27)--(106,26)--(107,25)--(108,24)--(109,25)--
(110,26)--(111,27)--(112,26)--(113,25)--(114,24)--(115,23)--(116,22)--(117,21)--(118,22)--(119,21)--(120,20)--(121,21)--(122,20)--(123,19)--(124,18)--(125,17)--(126,16)--(127,15)--(128,14)--(129,15)--(130,16)--(131,15)--(132,14)--(133,13)--(134,12)--(135,11)--(136,10)--(137,9)--(138,8)--(139,7)--(140,8)--(141,7)--(142,6)--(143,5)--(144,4)--(145,3)--(146,2)--
(147,1)--(148,0)--(149,1)--(150,0);

  \draw[black,thick] (-.5,20)--(.6,20);
  \node[left] at (0,20){$ C N$};

  \end{tikzpicture}
 \caption{\label{fig:macroshape}  The macroscopic shape of the substrate in equilibrium when $F(\gl) \ge G(\gs)$ (at the top) and
 $F(\gl) <G(\gs)$  (at the bottom). The dotted line illustrates the macroscopic shape, which is the scaling limit when $N\to \infty$ (The dotted line in the top figure coincides with the $x-$axis.).}
\end{figure}

\begin{theorem}\label{th:shapeandcontact1}
 For $\lambda \geq 0$, $\sigma >0$, we have
\begin{itemize}
\item[1.] if $G(\sigma)>F(\lambda)$, then for every $\gep>0$ there exists $\delta>0$ such that for all $N$ sufficiently large,
\begin{equation}\label{areadomshape}
\mu_N \left(  \sup_{u\in[0,2]}\left| \frac{1}{N}\xi_{\lceil u N\rceil}-M_{\sigma}(u) \right|>\gep \right)\le e^{-\delta N};
\end{equation}

\item[2.] if $G(\sigma) < F(\lambda)$, then for every $\gep>0$ there exists $\delta>0$ such that 
for all $N$ sufficiently large, 
\begin{equation}
\mu_N \left( \sup_{x\in \lint 0,2N \rint} \xi_{x} > \gep N \right)\le e^{-\delta N};
\label{pindomshape}
\end{equation}
\item[3.] if $G(\sigma) = F(\lambda)$, then for every $\gep>0$  
and all $N$ sufficiently large, 
\begin{equation}
 \frac{1}{C\sqrt{N}} \le \mu_N \left( \sup_{x\in \lint 0,2N \rint} \xi_{x} > \gep N \right)\le \frac{C}{\sqrt{N}},
\label{pindomshapetie}
\end{equation}
and furthermore there exists $\delta>0$ such that
\begin{equation}\label{pintieareashape}
\mu_N \left(  \sup_{x\in \lint 0,2N \rint} \xi_{x} > \gep N  \text{ and } \sup_{u\in[0,2]}\left| \frac{1}{N}\xi_{\lceil u N\rceil}-M_{\sigma}(u) \right|>\gep \right)\le e^{-\delta N}.
\end{equation} 

\end{itemize}
 
\end{theorem}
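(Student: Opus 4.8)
The three regimes are governed by the comparison between the two free energies, and in each case the strategy is to contrast the cost of adopting a given macroscopic shape against the partition-function asymptotics from Proposition \ref{th:asymppf}. For part 1 ($G(\sigma)>F(\lambda)$), I would argue as follows. First, observe that $M_\sigma$ is the unique maximizer of the variational problem whose value is $G(\sigma)$; this is the content of \cite[Proposition 3]{LabbeWABridge} applied to the WASEP on the segment, and $M_\sigma(u)=\int_0^u\tanh(\sigma(1-x))\,\dd x$ is precisely the optimal profile there. The key point is a large-deviation upper bound: for any profile $\phi$ bounded away from $M_\sigma$ by $\gep$ in sup-norm, the combinatorial/area cost of paths following $\phi$ is exponentially smaller than $e^{2NG(\sigma)}$, say bounded by $e^{2N(G(\sigma)-c(\gep))}$ with $c(\gep)>0$. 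Since adding the pinning weight $\lambda^{H(\xi)}$ can only help paths that spend time near zero — and near-zero paths are already excluded by being $\gep$-far from $M_\sigma$ (as $M_\sigma$ is macroscopically positive on $(0,2)$) — the numerator in $\mu_N$ restricted to the bad event is at most $e^{2N(G(\sigma)-c(\gep))}$ up to subexponential corrections, while by \eqref{spf} the denominator $Z_N(\lambda,\sigma)$ is at least $\frac{1}{C_1}N^{-1/2}e^{2NG(\sigma)}$. Dividing yields \eqref{areadomshape} with any $\delta<c(\gep)$. Concretely I would obtain the large-deviation estimate by discretizing $[0,2]$ into $O(1/\gep)$ blocks, on each block comparing the number of paths with prescribed endpoints against the local entropy $e^{2N\int L}$ via the classical ballot-type / reflection estimates for $\pm1$ walks, exactly as in \cite{LabbeWABridge}.

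For part 2 ($G(\sigma)<F(\lambda)$, which forces $\lambda>2$), the target event is $\{\sup_x\xi_x>\gep N\}$. Here I would split according to whether the interface also makes many contacts. If $\xi$ reaches height $\gep N$ then on the corresponding excursion away from $0$ it pays an entropic price; more precisely, any path attaining macroscopic height has $H(\xi)\le 2N-\gep N$ minus the excursion length, and a standard argument shows that the total weight of paths with $\sup_x \xi_x>\gep N$ is at most $C e^{2N F(\lambda)-c(\gep)N}$: decompose at the first and last visit to the level $\gep N$, use \eqref{asymparea}-type bounds (or simply the area bound $\exp(\frac{\sigma}{N}A(\xi))\le e^{2\sigma N}$, harmless on the exponential scale) for the middle piece, and the pinning partition function asymptotics \eqref{asymppinning} for the two outer pieces, noting the middle piece contributes no contacts. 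Combined with the lower bound $Z_N(\lambda,\sigma)\ge \frac{1}{C_1}e^{2NF(\lambda)}$ from \eqref{scpf}, this gives \eqref{pindomshape}.

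For part 3 ($G(\sigma)=F(\lambda)$, again $\lambda\ge 2$): the two-sided bound \eqref{pindomshapetie} comes from the fact that in the tie case the localized (pinned) configurations and the delocalized (area-optimal) configurations contribute at the \emph{same} exponential order, but the localized ones carry an extra polynomial factor. Precisely, the pinned part of the partition function is $\asymp e^{2NF(\lambda)}$ (no polynomial correction, by \eqref{asymppinning} for $\lambda>2$) while the contribution of paths with $\sup_x\xi_x>\gep N$ that still achieve the maximal exponential rate is $\asymp N^{-1/2}e^{2NG(\sigma)}$ — this is the scaling of $\tilde Z_N(\sigma)$ in \eqref{asymparea}, and one checks the half-space constraint and the absence of pinning reward on a macroscopic excursion costs only a bounded factor. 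Their ratio is $\asymp N^{-1/2}$, giving \eqref{pindomshapetie} after dividing by $Z_N(\lambda,\sigma)\asymp e^{2NF(\lambda)}$ (which by \eqref{scpf} is comparable to the pinned part). Finally \eqref{pintieareashape} follows by intersecting: a path that is both macroscopically high \emph{and} macroscopically far from the optimal profile $M_\sigma$ pays the full large-deviation price $e^{-c(\gep)N}$ relative to $e^{2NG(\sigma)}$ (by the same block estimate as in part 1), so its total weight is $\le e^{2NF(\lambda)-c(\gep)N}$, and dividing by $Z_N\asymp e^{2NF(\lambda)}$ gives the exponential bound.

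\medskip
\noindent\textbf{Main obstacle.} The delicate point is the large-deviation upper bound used in parts 1 and 3, namely that paths staying uniformly $\gep$-away from $M_\sigma$ have weight at most $e^{2N(G(\sigma)-c(\gep))}$ with an \emph{explicit} positive gap $c(\gep)$. Getting this cleanly requires a quantitative, uniform version of the block-entropy estimate — controlling simultaneously the entropy $\prod e^{2N\int L}$ and the sub-exponential (reflection/ballot) corrections across $O(1/\gep)$ blocks, and ruling out that a path could "cheat" by being far from $M_\sigma$ on a small set while gaining from pinning elsewhere. I expect this to be the technical heart; the pinning-dominated estimates in part 2 and the polynomial bookkeeping in part 3 are comparatively routine given \eqref{asymppinning}, \eqref{asymparea}, and Proposition \ref{th:asymppf}.
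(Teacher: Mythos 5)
Your overall strategy (compare the weight of the bad event against the partition-function asymptotics of Proposition \ref{th:asymppf}) is the right one, and parts 2 and 3 are essentially the paper's argument: the paper bounds $\mu_N(\sup_x\xi_x>\gep N)$ through the largest-excursion variable $L_{\max}$ and the renewal representation \eqref{renewexpress} (Proposition \ref{th:upbpinningstr}), which is the rigorous form of your ``decompose at the first and last visit to level $\gep N$'' computation. One slip there: your parenthetical claim that the crude bound $\exp(\tfrac{\sigma}{N}A(\xi))\le e^{2\sigma N}$ is ``harmless on the exponential scale'' is false --- $e^{2\sigma N}$ can swamp the gain $e^{-2\gep N(F(\gl)-G(\gs))}$ unless $\sigma$ is small; you must keep the sharp bound $Z_\ell(0,\sigma\ell/N)\le C\ell^{-1/2}e^{2\ell G(\sigma)}$ for the excursion, as the paper does in \eqref{unifupbd:tildeKn}. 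Since you also offer that correct route, this is recoverable.

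The genuine gap is the profile-concentration estimate in parts 1 and 3, which you yourself flag as the unresolved ``main obstacle.'' Your proposed block-entropy/large-deviation discretization is not what the paper does, and the step you worry about --- a path that is far from $M_\sigma$ on part of $[0,2N]$ while harvesting pinning reward elsewhere --- is exactly the point your sketch does not close: such configurations are \emph{not} excluded by being far from $M_\sigma$, and controlling them requires showing that partially pinned profiles (unpinned fraction $\beta<1$) have weight $e^{2N[\beta G(\beta\sigma)+(1-\beta)F(\gl)]}$, strictly below $e^{2NG(\sigma)}$. The paper resolves this in two much lighter steps. First, for $\gl=0$ it rewrites $\mu_N^{0,\sigma}$ via the tilted law $\nu_N$ of \eqref{defNu}, under which the increments are \emph{independent} with means $\tanh(h^N_k)$ summing to $NM_\sigma(n/N)+O(1)$; Hoeffding then gives \eqref{areashape} directly, and the conditioning on $\{S_{2N}=0,\ S_n>0\}$ costs only a polynomial factor by Proposition \ref{th:lemnun}. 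Second, for general $\gl$ it first proves $\mu_N(L_{\max}\le(1-\gep')N)\le e^{-\delta N}$ (Proposition \ref{th:upasymptareadom}, via the renewal decomposition with at most $\gep_0^{-1}$ long excursions), which kills the ``cheating'' configurations, and then conditions on the endpoints $L(\xi),R(\xi)$ of the macroscopic unpinned window so that the middle piece has law $\mu_{\bar N}^{0,\bar\sigma}$ and the $\gl=0$ result applies. Without either your quantitative block estimate being carried out or this excursion-decomposition input, \eqref{areadomshape} and \eqref{pintieareashape} are not proved.
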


\begin{rem}
 Note that the corresponding shape result in the case of pure pinning ($\sigma=0$) can be deduced from \cite[Chapter 2]{GiacominPolymerbk} while that for
 WASEP interfaces (corresponding to \eqref{partitilde}) can be extracted from the results in \cite{LabbeWABridge}.
\end{rem}

\begin{figure}[h]
  \centering  
\begin{tikzpicture}[scale=0.3,font=\tiny] 
       \draw[->] (-4,0)--(16,0) node[anchor=north west]{$ (\log \gl)$-axis};
       \draw[ ->] (0,-2) -- (0,15) node[anchor=north west]{$ \gs$-axis};
      \draw[blue,thick](-4,0)--(0.693147,0);
      \draw[black, thick] (0.693147,0)--(0.693147,-2);            
      \draw[black,fill] (0.693147,0) circle (0.08cm);
      \draw[black,fill] (0,0) circle (0.08cm);
      \node[left] at (0.3,-0.35){$0$};     
      \foreach \x in {4,8,12} {  \draw (-0.1,\x)--(0.1,\x);}    
       \node[left] at (0, 4){$1$};
      \node[left] at (0, 8){$2$};
      \node[left] at (0, 12){$3$};
      
      \draw (0.693147,0)--(0.693147,-0.1);
      \node[right] at (0.63, -0.5){$\log 2$};
      \foreach \x in {4,6,8,10,12,14} {  \draw (\x,0)--(\x,-0.2);}            
       \node[below] at (4,0) {$4$};       
       \node[below] at (6,0) {$6$};
        \node[below] at (8,0) {$8$}; 
        \node[below] at (10,0) {$10$};
        \node[below] at (12,0) {$12$};  
        \node[below] at (14,0) {$14$};

\draw[red,thick](0.693147,0.)--(0.763679,0.24)--(0.836904,0.48)--(0.913106,0.72)--(0.992562,0.96)--(1.07554,1.2)--(1.1623,1.44)--(1.2531,1.68)-- (1.34818,1.92)-- (1.44781,2.16)-- (1.55222,2.4)-- (1.66168,2.64)-- (1.77644,2.88)--(1.89676,3.12)--(2.0229,3.36)--(2.15516,3.6)--(2.29382,3.84)--(2.43919,4.08)--(2.59159,4.32)--(2.75136,4.56)--(2.91886,4.8)-- (3.09448,5.04)-- (3.27862,5.28)-- (3.47171,5.52)--(3.67421,5.76)--(3.88662,6.)--(4.10944,6.24)--(4.34324,6.48)-- (4.5886,6.72)-- (4.84615,6.96)-- (5.11657,7.2)-- (5.40056,7.44)-- (5.69887,7.68)-- (6.01232,7.92)-- (6.34176,8.16)-- (6.6881,8.4)-- (7.05231,8.64)-- (7.43542,8.88)--(7.83851,9.12)--(8.26275,9.36)-- (8.70937,9.6)-- (9.17968,9.84)--(9.67506,10.08)-- (10.197,10.32)-- (10.7471,10.56)--(11.3269,10.8)--(11.9383,11.04)--(12.5831,11.28)--(13.2634,11.52)-- (13.9812,11.76)-- (14.7387,12.);

\draw[color=red,fill,opacity=.2](0.693147,0.)--(0.763679,0.24)--(0.836904,0.48)--(0.913106,0.72)--(0.992562,0.96)--(1.07554,1.2)--(1.1623,1.44)--(1.2531,1.68)-- (1.34818,1.92)-- (1.44781,2.16)-- (1.55222,2.4)-- (1.66168,2.64)-- (1.77644,2.88)--(1.89676,3.12)--(2.0229,3.36)--(2.15516,3.6)--(2.29382,3.84)--(2.43919,4.08)--(2.59159,4.32)--(2.75136,4.56)--(2.91886,4.8)-- (3.09448,5.04)-- (3.27862,5.28)-- (3.47171,5.52)--(3.67421,5.76)--(3.88662,6.)--(4.10944,6.24)--(4.34324,6.48)-- (4.5886,6.72)-- (4.84615,6.96)-- (5.11657,7.2)-- (5.40056,7.44)-- (5.69887,7.68)-- (6.01232,7.92)-- (6.34176,8.16)-- (6.6881,8.4)-- (7.05231,8.64)-- (7.43542,8.88)--(7.83851,9.12)--(8.26275,9.36)-- (8.70937,9.6)-- (9.17968,9.84)--(9.67506,10.08)-- (10.197,10.32)-- (10.7471,10.56)--(11.3269,10.8)--(11.9383,11.04)--(12.5831,11.28)--(13.2634,11.52)-- (13.9812,11.76)-- (14.7387,12.)--(14.7387,-2)--(0.693147,-2);
\node[red] at (8.82195,5) {$\tf(\gl,\sigma)=F(\gl)$};

\draw[color=blue,fill,opacity=.2](-4,0)--(0.693147,0.)--(0.763679,0.24)--(0.836904,0.48)--(0.913106,0.72)--(0.992562,0.96)--(1.07554,1.2)--(1.1623,1.44)--(1.2531,1.68)-- (1.34818,1.92)-- (1.44781,2.16)-- (1.55222,2.4)-- (1.66168,2.64)-- (1.77644,2.88)--(1.89676,3.12)--(2.0229,3.36)--(2.15516,3.6)--(2.29382,3.84)--(2.43919,4.08)--(2.59159,4.32)--(2.75136,4.56)--(2.91886,4.8)-- (3.09448,5.04)-- (3.27862,5.28)-- (3.47171,5.52)--(3.67421,5.76)--(3.88662,6.)--(4.10944,6.24)--(4.34324,6.48)-- (4.5886,6.72)-- (4.84615,6.96)-- (5.11657,7.2)-- (5.40056,7.44)-- (5.69887,7.68)-- (6.01232,7.92)-- (6.34176,8.16)-- (6.6881,8.4)-- (7.05231,8.64)-- (7.43542,8.88)--(7.83851,9.12)--(8.26275,9.36)-- (8.70937,9.6)-- (9.17968,9.84)--(9.67506,10.08)-- (10.197,10.32)-- (10.7471,10.56)--(11.3269,10.8)--(11.9383,11.04)--(12.5831,11.28)--(13.2634,11.52)-- (13.9812,11.76)-- (14.7387,12.)--(14.7387,14)--(-4,14)--(-4,0);
\node[blue] at (5,10) {$\tf(\gl,\sigma)=G(\gs)$};

\draw[color=black,fill,opacity=.4](-4,0)--(0.693147,0)--(0.693147,-2)--(-4,-2);

\node[black] at (-3,-1.5) {$\tf(\gl,\sigma)=0$};

 \node[right, red] at (14.7387,12.) {$F(\gl)=G(\gs)$ and $\gl>2$};

\node[black, thick, below] at (2.6,-2) {$\gl=2 \text{ and } \sigma\le 0$};    

\node[blue, thick, left] at (-4,0){$\gl\in [0, 2] \text{ and } \sigma=0$};

  \end{tikzpicture}
  \caption{The statics phase diagram for the free energy  $\tf(\gl,\sigma)$: the red curve is $F(\gl)=G(\gs)$ and $\gl>2$, the black line is $\gl=2 \text{ and } \sigma\le 0$, and the blue line is $\gl\in [0, 2] \text{ and } \sigma=0$.
    }\label{fig:staticsdiagram}
\end{figure}

\begin{rem}
Looking at \eqref{lafreenz} we see that the free energy of our model defined by
\begin{equation}
 \tf(\gl,\sigma):=\lim_{N\to \infty}  \frac{1}{2N} \log  Z_N(\gl, \sigma),
\end{equation}
is real-analytic in $\gl$ and $\sigma$, except on the curve 
$\{ (\gl,\sigma)  \ : \ \gl\ge 2,\  F(\gl)=G(\gl)\}$, on the half line line $\{ (\gl,\sigma)  \ : \ \gl=2, \sigma\le 0\}$ and the segment $\{ (\gl,\sigma)  \ : \ \gl\in [0, 2], \sigma=0\}$  (see Figure \ref{fig:staticsdiagram}). The partial derivatives of $\tf(\gl,\sigma)$ (corresponding to the asymptotic contact fraction and rescaled area respectively) are discontinuous across the line, indicating that the corresponding phase transition is of first order.
\end{rem}

\begin{figure}[h]
  \centering  
\begin{tikzpicture}[scale=0.5,font=\tiny] 
\draw (0.693147,0)--(14.7387,0) node[anchor=north west]{$ (\log \gl)$-axis};
       \draw (0.693147,0) -- (0.693147,14) node[anchor=north east]{$ \gs$-axis};       
      \foreach \x in {0,4,8,12} {  \draw (0.693147-0.1,\x)--(0.693147+0.1,\x);}
     \draw (0.693147,14) -- (14.7387,14)--(14.7387,0);
     \node[left] at (0.693147, 0){$0$};
       \node[left] at (0.693147, 4){$1$};
      \node[left] at (0.693147, 8){$2$};
      \node[left] at (0.693147, 12){$3$};

      \node[right] at (0, -0.5){$\log 2$};
      \foreach \x in {0.693147,2,4,6,8,10,12,14} {  \draw (\x,0)--(\x,-0.2);}            
      \node[below] at (2,0) {$2$};       
       \node[below] at (4,0) {$4$};       
       \node[below] at (6,0) {$6$};
        \node[below] at (8,0) {$8$}; 
        \node[below] at (10,0) {$10$};
        \node[below] at (12,0) {$12$};  
        \node[below] at (14,0) {$14$};

   \draw[red,thick](0.693147,0.)--(0.763679,0.24)--(0.836904,0.48)--(0.913106,0.72)--(0.992562,0.96)--(1.07554,1.2)--(1.1623,1.44)--(1.2531,1.68)-- (1.34818,1.92)-- (1.44781,2.16)-- (1.55222,2.4)-- (1.66168,2.64)-- (1.77644,2.88)--(1.89676,3.12)--(2.0229,3.36)--(2.15516,3.6)--(2.29382,3.84)--(2.43919,4.08)--(2.59159,4.32)--(2.75136,4.56)--(2.91886,4.8)-- (3.09448,5.04)-- (3.27862,5.28)-- (3.47171,5.52)--(3.67421,5.76)--(3.88662,6.)--(4.10944,6.24)--(4.34324,6.48)-- (4.5886,6.72)-- (4.84615,6.96)-- (5.11657,7.2)-- (5.40056,7.44)-- (5.69887,7.68)-- (6.01232,7.92)-- (6.34176,8.16)-- (6.6881,8.4)-- (7.05231,8.64)-- (7.43542,8.88)--(7.83851,9.12)--(8.26275,9.36)-- (8.70937,9.6)-- (9.17968,9.84)--(9.67506,10.08)-- (10.197,10.32)-- (10.7471,10.56)--(11.3269,10.8)--(11.9383,11.04)--(12.5831,11.28)--(13.2634,11.52)-- (13.9812,11.76)-- (14.7387,12.);
    
    \node[right, red] at (14.7387,12.) {$F(\gl)=G(\gs)$};
    
    \node[right, black] at (14.7387,6.40) {$E(\gl,\gs)=0$};
    
    \draw[black,thick](14.7387,6.40)--(13.8144,6.24)--(12.4219,6.)--(11.1752,5.76)-- (10.0579,5.52)--(9.05563,5.28)--(8.15569,5.04)--(7.34675,4.8)-- (6.61882,4.56)-- (5.96305,4.32)--(5.37163,4.08)--(4.83764,3.84)--(4.35498,3.6)-- (3.91825,3.36)-- (3.52266,3.12)--(3.164,2.88)-- (2.83851,2.64)-- (2.54288,2.4)-- (2.27413,2.16)-- (2.02961,1.92)--(1.80692,1.68)-- (1.60388,1.44)-- (1.41848,1.2)-- (1.24885,0.96)-- (1.0932,0.72)-- (0.949823,0.48)-- (0.817033,0.24)--(0.693147,0.);

\draw[color=blue,fill,opacity=.2] (0.693147,0.)--(0.763679,0.24)--(0.836904,0.48)--(0.913106,0.72)--(0.992562,0.96)--(1.07554,1.2)--(1.1623,1.44)--(1.2531,1.68)-- (1.34818,1.92)-- (1.44781,2.16)-- (1.55222,2.4)-- (1.66168,2.64)-- (1.77644,2.88)--(1.89676,3.12)--(2.0229,3.36)--(2.15516,3.6)--(2.29382,3.84)--(2.43919,4.08)--(2.59159,4.32)--(2.75136,4.56)--(2.91886,4.8)-- (3.09448,5.04)-- (3.27862,5.28)-- (3.47171,5.52)--(3.67421,5.76)--(3.88662,6.)--(4.10944,6.24)--(4.34324,6.48)-- (4.5886,6.72)-- (4.84615,6.96)-- (5.11657,7.2)-- (5.40056,7.44)-- (5.69887,7.68)-- (6.01232,7.92)-- (6.34176,8.16)-- (6.6881,8.4)-- (7.05231,8.64)-- (7.43542,8.88)--(7.83851,9.12)--(8.26275,9.36)-- (8.70937,9.6)-- (9.17968,9.84)--(9.67506,10.08)-- (10.197,10.32)-- (10.7471,10.56)--(11.3269,10.8)--(11.9383,11.04)--(12.5831,11.28)--(13.2634,11.52)-- (13.9812,11.76)-- (14.7387,12.)--(14.7387,14)--(0.693147,14);
    
 \draw[color=red,fill,opacity=.2]  (0.693147,0.)--(0.763679,0.24)--(0.836904,0.48)--(0.913106,0.72)--(0.992562,0.96)--(1.07554,1.2)--(1.1623,1.44)--(1.2531,1.68)-- (1.34818,1.92)-- (1.44781,2.16)-- (1.55222,2.4)-- (1.66168,2.64)-- (1.77644,2.88)--(1.89676,3.12)--(2.0229,3.36)--(2.15516,3.6)--(2.29382,3.84)--(2.43919,4.08)--(2.59159,4.32)--(2.75136,4.56)--(2.91886,4.8)-- (3.09448,5.04)-- (3.27862,5.28)-- (3.47171,5.52)--(3.67421,5.76)--(3.88662,6.)--(4.10944,6.24)--(4.34324,6.48)-- (4.5886,6.72)-- (4.84615,6.96)-- (5.11657,7.2)-- (5.40056,7.44)-- (5.69887,7.68)-- (6.01232,7.92)-- (6.34176,8.16)-- (6.6881,8.4)-- (7.05231,8.64)-- (7.43542,8.88)--(7.83851,9.12)--(8.26275,9.36)-- (8.70937,9.6)-- (9.17968,9.84)--(9.67506,10.08)-- (10.197,10.32)-- (10.7471,10.56)--(11.3269,10.8)--(11.9383,11.04)--(12.5831,11.28)--(13.2634,11.52)-- (13.9812,11.76)-- (14.7387,12.)--(14.7387,6.40)--(13.8144,6.24)--(12.4219,6.)--(11.1752,5.76)-- (10.0579,5.52)--(9.05563,5.28)--(8.15569,5.04)--(7.34675,4.8)-- (6.61882,4.56)-- (5.96305,4.32)--(5.37163,4.08)--(4.83764,3.84)--(4.35498,3.6)-- (3.91825,3.36)-- (3.52266,3.12)--(3.164,2.88)-- (2.83851,2.64)-- (2.54288,2.4)-- (2.27413,2.16)-- (2.02961,1.92)--(1.80692,1.68)-- (1.60388,1.44)-- (1.41848,1.2)-- (1.24885,0.96)-- (1.0932,0.72)-- (0.949823,0.48)-- (0.817033,0.24)--(0.693147,0.);  
    
 \draw[black](14.7387,12.)--(14.7387,0.);

\node at (5,1) {$\text{The rapidly mixing phase}$};
\node at (5,0.4) {$\text{( localized and single well)}$};
\node at (8,3) {$\tf(\gl,\gs)=F(\gl)$};

         \draw (10+0,0+2) to [out=275,in=180] (10+0.75,-1.5+2) to [out=0,in=225] (10+1.5,-0.9+2) to [out=45,in=260] (10+3,0+2);  

\node at (11,9) {$\text{ The  slow mixing phase}$};
\node at (11,8.4) {$\text{ (localized and double wells)}$};
\node at (8,6) {$\tf(\gl,\gs)=F(\gl)$};

     \draw[color=black] (10+0,0+7+1) to [out=280,in=180] (10+0.75,-1.5+7+1) to [out=0,in=180] (10+1.5,-0.9+7+1) to [out=0,in=180] (10+2.25,-1.2+7+1) to [out=0,in=260] (10+3,0+7+1);  
     
     \node at (9,13) {$\text{The  slow mixing phase}$};
     \node at (9,12.4) {$\text{ (delocalized and double wells)}$};

     \node at (5,10) {$\tf(\gl,\gs)=G(\gs)$};

     \draw[color=black] (10+0-4,0+12) to [out=280,in=180] (10+0.75-4,-1.2+12) to [out=0,in=180] (10+1.5-4,-0.9+12) to [out=0,in=180] (10+2.25-4,-1.5+12) to [out=0,in=260] (10+3-4,0+12);

  \end{tikzpicture}
  \caption{The dynamical phase diagram  in the regime $\gl>2$ and $\gs>0$:  
  The  line $F(\gl)=G(\gs)$ separates the localized phase from the delocalized phase, while the  line 
  $E(\gl,\gs)=0$ separates the rapidly mixing phase from the slow mixing phase.
  } \label{fig:dynamicdiagram}
\end{figure}

 \subsection{Dynamics results}
As the main result for our paper
 we manage to identify two regimes for the dynamics, one where 
 the system relaxes in polynomial time 
 and one where the relaxation time grows exponentially with the size of the system.
 To state our result, we need to introduce a new quantity. 
We define the activation energy of the system by  
 \begin{equation}\label{def:actienergy}
 E(\gl,\sigma)= G(\sigma)\wedge F(\gl)-\inf_{\beta\in[0,1]}\left(\beta G(\beta \sigma)+(1-\beta )F(\lambda)\right).
\end{equation}
 Note that  $E(\gl,\sigma)\ge 0$ and that  $E(\gl,\sigma)>0$ if and only if the equation
\begin{equation}
G(\beta \sigma)+\sigma \beta G'(\beta \sigma)-F(\lambda)=0. \label{defbetastar}
\end{equation}
admits a solution in $(0,1)$.
This condition is equivalent to $G(\sigma)+\sigma G'(\sigma)>  F(\lambda)>0$.

\subsubsection*{The main result}
We show that the system relaxation to equilibrium is ``fast'', that is, polynomial in $N$ when $E(\gl,\sigma)=0$ while it is exponentially slow when $E(\gl,\sigma)>0$.

 \begin{theorem} \label{th:relax}

For all $\lambda>2$ and all $\sigma>0$, we have
\begin{equation}
\lim_{N \to \infty}\frac{1}{2N} \log T_{\Rel}^N(\lambda, \sigma)= E(\gl,\sigma).
\end{equation}
When $E(\gl,\sigma)=0$, there exist constants $C(\lambda, \sigma)>0$ and $C(\lambda)>0$ such that for all $N\geq 1$,
\begin{equation}\label{nobottlecase}
C_2(\lambda, \sigma)^{-1} N\leq T_{\Rel}^N(\lambda, \sigma)\leq C_2(\lambda, \sigma)  N^{C(\lambda)}. 
\end{equation}
When $E(\gl, \gs)>0$, there exists constants $C(\lambda, \sigma)>0$ and $C'(\lambda, \sigma)>0$ such that
 $$C'(\gl, \gs)^{-1} N^{-2} \le T_{\Rel}^N(\lambda, \sigma) e^{-2N E(\gl,\sigma)}\le C'(\gl, \gs) N^{C(\gl,\gs)}.$$  
\end{theorem}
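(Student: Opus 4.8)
The plan is to prove the two matching bounds separately, treating the free-energy asymptotics (Proposition \ref{th:asymppf}) as a black box. For the lower bound $T_{\Rel}^N(\lambda,\sigma)\geq C'(\lambda,\sigma)^{-1}N^{-2}e^{2NE(\lambda,\sigma)}$, the strategy is the standard bottleneck (conductance) estimate: one exhibits a test function $f$, or equivalently a subset $\mathcal{A}\subset\Omega_N$, such that $\mu_N(\mathcal{A})$ is bounded away from $0$ and $1$ while the ``edge boundary'' $\mu_N\times r_N(\partial\mathcal{A})$ is exponentially small of order $e^{-2NE(\lambda,\sigma)}$. The natural choice for $\mathcal{A}$ is, say, the set of paths whose maximal height exceeds $\epsilon N$ — i.e. the ``delocalized well'' — when $F(\lambda)>G(\sigma)$ is the statically dominant phase, and its complement otherwise. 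The core computation is to show that any path configuration realizing a transition between the two wells must pass through an intermediate ``mixed'' profile consisting of a stretch of length $\beta\cdot 2N$ carrying a WASEP-type bump and a complementary stretch of length $(1-\beta)\cdot 2N$ pinned to the wall; the Gibbs weight of such a profile is $e^{2N(\beta G(\beta\sigma)+(1-\beta)F(\lambda))+o(N)}$ after optimizing over $\beta$ (this is precisely where the infimum in \eqref{def:actienergy} enters, and the relation \eqref{defbetastar} characterizes the optimal $\beta$). Dividing by the equilibrium weight $e^{2N(G(\sigma)\wedge F(\lambda))+o(N)}$ of the dominant well gives the factor $e^{-2NE(\lambda,\sigma)+o(N)}$; the extra polynomial corrections $N^{-2}$ come from the precise sub-exponential asymptotics in \eqref{asymppinning}, \eqref{asymparea} and the counting of the $O(N)$ positions where the interface between the two stretches can sit, together with the bound $|\partial\mathcal{A}|$ on the number of active corners along a transition. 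The cleanest way to organize this is to construct an explicit test function $f$ interpolating between the two wells (e.g. a function of the rescaled area or maximal height), bound $\Var_{\mu_N}(f)$ from below by a constant, and bound $\mathcal{E}(f)$ from above by controlling $\mu_N$ of the bottleneck set.

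For the upper bound $T_{\Rel}^N(\lambda,\sigma)\leq C'(\lambda,\sigma)N^{C(\lambda,\sigma)}e^{2NE(\lambda,\sigma)}$, the plan is a two-scale / restriction argument. First decompose $\Omega_N$ into two (or a few) overlapping regions, $\Omega_N^{\mathrm{loc}}$ (paths of height $\leq \epsilon N$) and $\Omega_N^{\mathrm{deloc}}$ (paths whose maximal height is $\geq \delta N$, say roughly following $M_\sigma$), plus a thin interface region. Within each region the conditioned chain has no bottleneck and should mix in time $N^{O(1)}$: for $\Omega_N^{\mathrm{loc}}$ this reduces, by monotonicity/coupling with the pure pinning dynamics and known results (\cite{caputo2008approach, yang2019cutoff}), to a polynomial relaxation time; for $\Omega_N^{\mathrm{deloc}}$ one compares with the WASEP-on-the-segment dynamics studied in \cite{LabbeWABridge, labbe2016cutoff}, again polynomial. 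Then one uses a standard decomposition-of-variance theorem (e.g. the one of Jerrum–Son–Tetali–Vigoda, or Caputo's block-dynamics bound) which expresses $T_{\Rel}^N$ in terms of the relaxation times of the restricted chains times the relaxation time of the ``projected'' two-state chain that tracks which region the configuration lies in; the latter is exactly $e^{2NE(\lambda,\sigma)+o(N)}$ by the bottleneck ratio, and here one needs the matching \emph{lower} bound on the bottleneck weight, i.e. that every crossing path really does cost at least $e^{-2NE(\lambda,\sigma)}$ — this is a large-deviations / subadditivity statement about the area-plus-contact functional, proven by concatenating the WASEP estimate \eqref{asymparea} on one sub-segment with the pinning estimate \eqref{asymppinning} on the complement and optimizing. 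Monotonicity of the dynamics (the rates \eqref{jumprate} are of FKG/attractive type with respect to the natural height partial order) will be used repeatedly to build couplings and to reduce to the extremal initial conditions $\xi\equiv 0$ and the maximal path.

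Finally, the free-energy-scale statement $\lim_N \frac{1}{2N}\log T_{\Rel}^N=E(\lambda,\sigma)$ follows immediately from the two quantitative bounds by taking logarithms and dividing by $2N$.

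The main obstacle I expect is the upper bound on the polynomial exponent $C(\lambda)$, i.e. showing that the dynamics \emph{restricted to each well} genuinely relaxes in polynomial time with an explicit exponent. For the localized well this means pushing the pinning-dynamics estimates of \cite{caputo2008approach, yang2019cutoff} through in the presence of the area tilt $\sigma/N$ (a perturbation which is small per site but macroscopic in aggregate, so some care is needed to show it does not create secondary bottlenecks inside the well); for the delocalized well one needs the analogous control for a WASEP bridge conditioned to stay near its concave macroscopic profile, adapting \cite{LabbeWABridge, labbe2018mixing}. A secondary technical difficulty is making the region decomposition clean enough that the ``interface'' configurations (height $\in[\epsilon N,\delta N]$) carry negligible mass and do not themselves form a bottleneck — this requires the equilibrium concentration estimates of Theorem \ref{th:shapeandcontact1}, in particular the exponential bound \eqref{pindomshape}/\eqref{areadomshape} and the fact that the tie case \eqref{pindomshapetie} still gives a $1/\sqrt N$-scale separation rather than a genuine coexistence.
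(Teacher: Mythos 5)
Your overall architecture (conductance lower bound at the saddle of the effective potential, plus a Jerrum--Son--Tetali--Vigoda-type decomposition for the upper bound) is exactly the paper's, but there are two concrete gaps. First, for the lower bound your cut set is mis-parametrized: you propose $\mathcal{A}=\{\max_x\xi_x>\epsilon N\}$ for a generic $\epsilon$, whereas the bottleneck must sit precisely at the saddle $\beta^*$ of $V(\beta)=-\beta G(\beta\sigma)-(1-\beta)F(\lambda)$, and the right coordinate is the length of the largest excursion, not its height: the paper cuts at $\mathcal{E}_N^1=\{L_{\max}(\xi)\le\beta^*N\}$. This choice is not cosmetic — the boundary $\partial\mathcal{E}_N^1$ consists of configurations with one excursion of length exactly crossing $\beta^*N$ flanked by pinned stretches, and its weight $\asymp\sqrt{N}\,e^{2\beta^*NG(\beta^*\sigma)+2(1-\beta^*)NF(\lambda)}$ is computed by the same renewal decomposition \eqref{excupath} used for the partition function; with a height cut at generic $\epsilon$ you are not at the saddle and the denominator of the conductance is too large. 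You also omit the lower bound $T_{\Rel}^N\ge cN$ in the case $E=0$, which requires a separate test function (the paper uses $f_a(\xi)=\exp(\frac{a}{N}\sum_x\xi_x)$ with $a$ tuned so that $G(\sigma+a)\le F(\lambda)<G(\sigma+2a)$, following Caputo et al.).

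Second, and more seriously, the polynomial relaxation \emph{inside each well} is precisely the hard part and your proposal only gestures at it. The localized well is not comparable to the pure pinning dynamics by a soft monotonicity argument: it is the original constrained chain on sub-intervals, with excursions up to the macroscopic length $\beta^*N$ still allowed, so one must rule out secondary bottlenecks quantitatively. The paper does this by induction on $N$ (and on dyadic ranges of $\sigma$), partitioning according to the positions $(L(\xi),R(\xi))$ of the last/first zeros around the midpoint; the restricted chains are then products of two smaller pinned chains and one WASEP bridge, and the reduced chain is controlled by a Cheeger bound exploiting the uniform drift of $\bar p(x,y)$ when $G(\sigma)+\sigma G'(\sigma)<F(\lambda)$. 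A key device you do not anticipate is the modification of the rates so that $L$ and $R$ move only by nearest-neighbor steps: without it the exit rate $\bar\gamma$ in the decomposition theorem is of order $N$, and the induction accumulates a factor $N$ per level, yielding only $e^{O((\log N)^2)}$ rather than a polynomial. For the delocalized well a further partition tracking \emph{all} excursions longer than $\beta^*N$ and a path/congestion estimate for the reduced chain are needed. Your "overlapping regions" variant would also require a different decomposition theorem, since the one used here demands a genuine partition.
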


The curve $\{ (\gl,\sigma) \ : \ \sigma>0 \ , \ G(\sigma)+\sigma G'(\sigma)=F(\gl)\}$  delimits a second phase transition (the first transition being the wetting transition materialized by the curve $F(\gl)=G(\gs)$ see Figure \ref{fig:dynamicdiagram}) from a slow mixing regime to a fast mixing regime. This transition is not visible in the phase diagram of the static model and appears when considering the dynamics.

\subsubsection*{Mixing time}

For the sake of completeness, let us mention how our result translates for the 
mixing time of the Markov chain (see \cite{LPWMCMT} for a full review of the topic).
We let $(\eta_t^{\xi})_{t\ge 0}$ denote the Markov chain with generator $\cL_N$ \eqref{generator} starting with initial condition $\xi \in \gO_N$, and let $P_t^{\xi}$ denote its marginal distribution at time $t$. 
For all $\epsilon \in (0, 1)$, the $\epsilon-$mixing time for the dynamics is
\begin{equation}
T^{N,\gl,\sigma}_{\Mix}(\epsilon) \colonequals \inf \left \{t\geq 0: \sup_{\xi \in \gO_N} \Vert P_t^{\xi}-\mu_N \Vert_{\TV}\leq \epsilon \right\},
\end{equation}
where
$\Vert \pi_1-\pi_2 \Vert_{\TV} \colonequals \frac{1}{2} \sum_{\xi \in \gO_N} \vert \pi_1(\xi)-\pi_2(\xi)\vert$ denotes the total variation distnace.
By \cite[Lemma 20.11, Theorem 12.3]{LPWMCMT}, the mixing time can be compared to the relaxation time as follows
\begin{equation}\label{gapmixtime}
 T_{\Rel}^N(\gl, \gs) \log \frac{1}{2\epsilon} \le T^{N,\gl,\sigma}_{\Mix}(\epsilon) \le  T_{\Rel}^N(\gl, \gs) \log \frac{1}{\gep \mu_N^*},
\end{equation}
where $ \mu_N^* \colonequals \min_{\xi \in \gO_N} \mu_N(\xi)$.
It is almost immediate to check that in our case $\log \mu_N^*$ is of order $N$ (with a prefactor depending on $\gl$ and $\sigma$). Thus Theorem \ref{th:relax} remains essentially if one replaces  $T_{\Rel}^N(\gl, \gs)$ by $T^{N,\gl,\sigma}_{\Mix}(\epsilon)$.

\subsubsection*{A first heuristic}

Let us try to give a first explanation for the slower relaxation time when $E(\gl,\sigma)>0$ (additional elements will be brought in the course of the proof see the discussion in Section \ref{heuristicz}). In that case, the state space 
displays two distinct ``wells of potential'' for the effective  energy functional
$$V \ : \ \beta \mapsto -\beta G(\beta \sigma)-(1-\beta )F(\lambda).$$
The parameter $\beta\in[0,1]$ above corresponds to the fraction of the polymer length which is unpinned and the functional corresponds to the contribution to the partition function (on the exponential scale) of the polymer configurations which are macroscopically unpinned on a fraction $\beta$ of their length.
The idea is that the unpinned fraction should look like a stochastic diffusion on the segment, with a potential  $2N V(\cdot)$.

The time $e^{2N E(\gl,\sigma)}$ corresponds to the time required for such a diffusion to overcome the energy barrier between the two local mimina of $V(\beta)$ (at $0$ and $1$ see Figure \ref{fig:vshape}).
\begin{figure}[h]
  \centering
     \begin{tikzpicture}[scale=.3,font=\tiny] 
       \draw[->] (0,-10.5) -- (0,1.5) node[anchor=north west]{$V$};
       \draw[->] (0,0) -- (10.5,0) node[anchor=north west]{$\gb$-axis};
       \draw[black](10,0)--(10,-0.3);
       \node[below] at (10,-0.2) {$1$};
       \node[left] at (0,0){$0$};
          \draw [black] (0,-10.244+1)-- (0.2,-10.0392+1)--(0.4,-9.8345+1)--(0.6,-9.63019+1)--(0.8,-9.42642+1)--(1,-9.22336+1)--(1.2,-9.0212+1)--(1.4,-8.8201+1)--(1.6,-8.62025+1) --(1.8,-8.42181+1)-- (2,-8.22495+1) --(2.2,-8.02984+1)-- (2.4,-7.83664+1)--(2.6,-7.64551+1)--(2.8,-7.45661+1)-- (3.,-7.27009+1)-- (3.2,-7.0861+1)-- (3.4,-6.90479+1)--(3.6,-6.7263+1)-- (3.8,-6.55076+1)--(4.,-6.37832+1)--(4.2,-6.2091+1) --(4.4,-6.04323+1)-- (4.6,-5.88082+1)-- (4.8,-5.72201+1)--(5.,-5.56689+1)-- (5.2,-5.41559+1)-- (5.4,-5.2682+1)--(5.6,-5.12483+1)--(5.8,-4.98557+1)-- (6.,-4.85052+1)--(6.2,-4.71977+1)-- (6.4,-4.59341+1)--(6.6,-4.4715+1)--(6.8,-4.35414+1)-- (7.,-4.2414+1)--(7.2,-4.13336+1)--(7.4,-4.03007+1)-- (7.6,-3.9316+1)-- (7.8,-3.83802+1)-- (8.,-3.74939+1)-- (8.2,-3.66575+1)--(8.4,-3.58718+1)--(8.6,-3.5137+1)-- (8.8,-3.44539+1)-- (9.,-3.38227+1)-- (9.2,-3.3244+1)--(9.4,-3.27181+1)-- (9.6,-3.22455+1)-- (9.8,-3.18265+1)--(10.,-3.14615+1); 
  \node[below] at (5,-12){(a) $G(\gs)+\gs G'(\gs) \le F(\gl)$};     
\node[below] at (5,-13){ $\text{where } E(\gl, \gs)=0;$};

       \draw[->] (15,-10.5) -- (15,1.5) node[anchor=north west]{$V$};
       \draw[->] (15,0) -- (25.5,0) node[anchor=north west]{$\gb$-axis};
       \node[left] at (15,0){$0$};
        \draw[black](25,0)--(25,-0.3);
       \node[below] at (25,-0.2) {$1$};
       \draw[black](15.,-9.75024)--(15.2,-9.28543)-- (15.4,-8.82181)-- (15.6,-8.36054)-- (15.8,-7.90281)--(16.,-7.44977)-- (16.2,-7.00255)-- (16.4,-6.56226)--(16.6,-6.12998)--(16.8,-5.70678)--(17.,-5.29366)-- (17.2,-4.89163)-- (17.4,-4.50161)-- (17.6,-4.12453)-- (17.8,-3.76124)-- (18.,-3.41256)--(18.2,-3.07929)--(18.4,-2.76216)-- (18.6,-2.46187)-- (18.8,-2.17907)--(19.,-1.91439)--(19.2,-1.66841)-- (19.4,-1.44167)-- (19.6,-1.23467)--(19.8,-1.04789)-- (20.,-0.881759)-- (20.2,-0.736694)--(20.4,-0.61307)--(20.6,-0.511238)--(20.8,-0.431521)-- (21.,-0.374219)-- (21.2,-0.339609)-- (21.4,-0.327945)-- (21.6,-0.339463)-- (21.8,-0.374378)-- (22.,-0.43289)-- (22.2,-0.515181)-- (22.4,-0.621419)-- (22.6,-0.751759)-- (22.8,-0.906342)--(23.,-1.0853)-- (23.2,-1.28875)-- (23.4,-1.51679)--(23.6,-1.76954)-- (23.8,-2.04708)-- (24.,-2.3495)-- (24.2,-2.67686)-- (24.4,-3.02925)-- (24.6,-3.40673)--(24.8,-3.80935)-- (25.,-4.23717); 
         \node[below] at (20,-12){(b) $G(\gs)+\gs G'(\gs) > F(\gl)$};   
           \node[below] at (20,-13){ and $F(\gl) \ge G(\gs)$  where $E(\gl, \gs)>0$;  };

       \draw[->] (30,-10.5) -- (30,1.5) node[anchor=north west]{$V$};
       \draw[->] (30,0) -- (40.5,0) node[anchor=north west]{$\gb$-axis};
       \node[left] at (30,0){$0$};
       \draw[black](30.,-2.52763-0.5)-- (30.2,-2.31367-0.5)--(30.4,-2.10136-0.5)--(30.6,-1.89231-0.5)--(30.8,-1.68813-0.5)-- (31.,-1.49032-0.5)--(31.2,-1.30035-0.5)-- (31.4,-1.11959-0.5)--(31.6,-0.949295-0.5)--(31.8,-0.790656-0.5)--(32.,-0.644741-0.5)-- (32.2,-0.512524-0.5)-- (32.4,-0.394879-0.5)-- (32.6,-0.292589-0.5)--(32.8,-0.206348-0.5)-- (33.,-0.136769-0.5)--(33.2,-0.0843909-0.5)--(33.4,-0.0496856-0.5)--(33.6,-0.0330647-0.5)-- (33.8,-0.0348857-0.5)--(34.,-0.0554587-0.5)-- (34.2,-0.095052-0.5)-- (34.4,-0.153897-0.5)-- (34.6,-0.232193-0.5)-- (34.8,-0.330111-0.5)-- (35.,-0.447799-0.5)--(35.2,-0.585382-0.5)--(35.4,-0.742971-0.5)--(35.6,-0.920656-0.5)--(35.8,-1.11852-0.5)-- (36.,-1.33662-0.5)-- (36.2,-1.57503-0.5)-- (36.4,-1.83379-0.5)--(36.6,-2.11295-0.5)-- (36.8,-2.41253-0.5)--(37.,-2.73258-0.5)--(37.2,-3.07311-0.5)-- (37.4,-3.43416-0.5)--(37.6,-3.81573-0.5)--(37.8,-4.21785-0.5)-- (38.,-4.64053-0.5)--(38.2,-5.08378-0.5)--(38.4,-5.54761-0.5)--(38.6,-6.03203-0.5)-- (38.8,-6.53705-0.5)-- (39.,-7.06267-0.5)-- (39.2,-7.6089-0.5)-- (39.4,-8.17575-0.5)--(39.6,-8.76321-0.5)-- (39.8,-9.37129-0.5)-- (40.,-10-0.5); 
       \draw[black](40,0)--(40,-0.3);
       \node[below] at (40,-0.2) {$1$};
       \node[below] at (35,-12){(c) $G(\gs) > F(\gl)$}; 
       \node[below] at (35,-13){ where $E(\gl, \gs)>0$.};

    \end{tikzpicture}
    \caption{ 
     The shapes of the functional $V(\gb) \colonequals -\gb G(\gb \gs)-(1-\gb)F(\gl)$ 
    for three phases: (a) $G(\gs)+\gs G'(\gs) \le F(\gl);$ (b) $G(\gs)+\gs G'(\gs) > F(\gl)$ and $F(\gl) \ge G(\gs)$; (c) $G(\gs) > F(\gl).$ 
    }\label{fig:vshape}
  \end{figure}

We obtain  more detailed information concerning the tunnelling time between the higher local minimum of $V$ (which corresponds to a locally stable, or \textit{metastable} state) and the absolute  minimum which corresponds to the equilibrium state.
For $\xi\in \Omega_N$, we define the (half) length of the largest excursion of $\xi$ to be
\begin{equation}\label{def:biggestjump}
L_{\max}(\xi) = \sup \left\{ \ell \in \lint 1, N \rint\  : \exists x \in \lint 0, 2N \rint,  \ \xi_{x}=\xi_{x+2 \ell}=0, \  \forall y \in \lint 1, 2\ell-1\rint, \  \xi_{x+y}>0  \right\}.
\end{equation}
Assuming that $E(\gl,\sigma)>0$, we let $\beta^*\in(0,1)$ denote the unique solution of \eqref{defbetastar} and let  $\mathcal{E}_N^i$, $i=1,2$ be the domains of attraction of the two local minima of $V$
\begin{equation}
\begin{aligned}
\mathcal{E}_N^1 & \colonequals \left\{ \xi \in \Omega_N: L_{\max}(\xi) \le \beta^* N \right\},\\
\mathcal{E}_N^2  &\colonequals \left\{ \xi \in \Omega_N: L_{\max}(\xi)>  \beta^* N \right\}.\label{partitionintwo}
\end{aligned}
\end{equation}
We let $\cH_N$ denote the domain of attraction of the  higher of these two minima, that is
\begin{equation}\label{def:cHN}
\cH_N \colonequals\begin{cases}
\mathcal{E}_N^2   \quad  & \text{ if } G(\sigma)\leq F(\lambda),\\
\mathcal{E}_N^1   \quad  & \text{ if } G(\sigma)>F(\lambda).
\end{cases}
\end{equation}
Our choice for breaking the tie when  $G(\sigma)= F(\lambda)$ is not arbitrary at all and comes from the estimates for the partition function beyond the exponential scale obtained in Proposition  \ref{th:asymppf}.

\medskip

According to our heuristic analysis, the behavior of the dynamics when $E(\gl,\sigma)>0$ should be the following: If starting from a configuration $\xi\in \cH_N$, the system should quickly thermalize in $\cH_N$ (within a time which is polynomial in $N$) and then take a time of order $\exp( 2NE(\gl,\sigma) )$ to jump from $\cH_N$ to $\gO_N\setminus \cH_N$ and reach equilibrium. Moreover, when properly rescaled
the time for jumping from $\cH_N$ to $\gO_N\setminus \cH_N$ should converge to an exponential random variable.

\medskip

These features (existence of different time scales, and loss of memory from one time scale to another) are the signature of metastable behavior of the system. We refer to \cite{bovier2016metastability, landim2019metastable} for an introduction to the phenomenon and a review of the literature. 

\medskip

Given $\nu$ a probabilty on $\gO_N$ we  let $\bbP_{\nu}$ denote the law of the Markov chain  $(\eta_t)_{t\geq 0}$ starting with $\eta_0$ distributed as $\nu$.
Our last result establishes the metastability of our system in the sense that it shows that the dynamics starting from $\cH_N$ exits it at an exponential rate which is given by the relaxation time of the dynamics.

\begin{theorem}\label{th:metastablemacro}
We have
\begin{equation*}
 \lim_{N\to \infty} 
 \mathbb{P}_{ \mu_N(\cdot \vert \cH_N)}
 \left( \eta_{t T_{\Rel}^N(\lambda, \sigma)}
 \in \cH_N \right)=\exp(-t),
\end{equation*}
and
the finite-dimensional distributions of the process $\ind_{\cH_N}(\eta_{t T_{\Rel}^N(\lambda, \sigma)})$ (under  $\mathbb{P}_{ \mu_N(\cdot \vert \cH_N)}$) converges to that of a Markov process which starts at one and jumps, at rate one, to zero where it is absorbed.
\end{theorem}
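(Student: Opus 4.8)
The plan is to derive the exponential exit law from the sharp relaxation-time estimates of Theorem \ref{th:relax} together with a quantitative control on the equilibration within each well. First I would establish a \emph{quasi-stationarity} statement: starting from $\mu_N(\cdot\,|\,\cH_N)$, the distribution of $\eta_t$ conditioned on $\{\eta_t\in\cH_N\}$ stays close (in total variation) to $\mu_N(\cdot\,|\,\cH_N)$ for all $t$, up to an error that is polynomially small in $N$ uniformly on the time scale $T_{\Rel}^N$. The mechanism is standard: the restriction of the generator $\cL_N$ to functions supported on $\cH_N$ (with absorption on $\gO_N\setminus\cH_N$) has a Perron eigenvalue $\theta_N>0$ and a corresponding quasi-stationary distribution $\nu_N$; one shows (i) $\theta_N = (1+o(1))/T_{\Rel}^N(\lambda,\sigma)$ and (ii) $\|\nu_N - \mu_N(\cdot\,|\,\cH_N)\|_{\TV}\to 0$. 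Claim (i) follows by comparing $\theta_N$ with the principal Dirichlet eigenvalue and invoking the matching upper and lower bounds on $T_{\Rel}^N$; the point is that the bottleneck separating $\cH_N$ from its complement \emph{is} the global bottleneck governing the spectral gap, which is exactly what the proof of Theorem \ref{th:relax} shows via the functional $V$ and $L_{\max}$. Claim (ii) uses that the within-well relaxation time (the relaxation time of the chain restricted to $\cH_N$ with reflecting boundary, or a Dirichlet version) is only polynomial in $N$ --- again a byproduct of the bottleneck analysis, since removing the constraint at $L_{\max}=\beta^*N$ leaves a rapidly mixing chain on each piece --- so the ratio $\theta_N\cdot(\text{within-well relaxation time})\to 0$, which is precisely the separation-of-time-scales condition guaranteeing that the escape happens from local equilibrium.

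Granting quasi-stationarity, the first display follows quickly. Write $p_N(t):=\bbP_{\mu_N(\cdot|\cH_N)}(\eta_t\in\cH_N)$. Using that conditionally on non-absorption the law is $\approx\nu_N\approx\mu_N(\cdot|\cH_N)$, the escape over an infinitesimal time increment occurs at rate $\approx\theta_N$, so $p_N(t)$ satisfies $p_N'(t)=-(1+o(1))\theta_N\,p_N(t)$ uniformly for $t\in[0,CT_{\Rel}^N]$; integrating and plugging $\theta_N\sim 1/T_{\Rel}^N$ gives $p_N(tT_{\Rel}^N)\to e^{-t}$. More carefully, I would not differentiate but instead compare $\bbP(\eta_{(s+t)T_{\Rel}^N}\in\cH_N\mid \eta_{sT_{\Rel}^N}\in\cH_N)$ with $\bbP(\eta_{tT_{\Rel}^N}\in\cH_N)$ using the quasi-stationary approximation at time $sT_{\Rel}^N$, obtaining an approximate multiplicativity $p_N((s+t)T_{\Rel}^N)=(1+o(1))p_N(sT_{\Rel}^N)p_N(tT_{\Rel}^N)$ plus the asymptotics $p_N(\epsilon T_{\Rel}^N)=1-(1+o(1))\epsilon$ for small $\epsilon$, and conclude by the standard functional-equation argument that the limit is $e^{-t}$.

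For the finite-dimensional convergence of $\ind_{\cH_N}(\eta_{tT_{\Rel}^N})$ to the absorbed two-state Markov process, I would apply the Markov property at the successive times $t_1<\dots<t_k$ and show that the relevant conditional probabilities factorize in the limit: given $\eta_{t_jT_{\Rel}^N}\in\cH_N$, the law at time $t_jT_{\Rel}^N$ is again $\approx\mu_N(\cdot|\cH_N)$ by quasi-stationarity (this is where the loss of memory enters), so the increments behave like independent exponential-clock trials; and once the chain has left $\cH_N$, it is overwhelmingly unlikely to return on the time scale $T_{\Rel}^N$ --- a return would require recrossing the bottleneck against the favorable drift of $V$, costing an extra factor $e^{2NE(\lambda,\sigma)}$, hence probability $o(1)$ even over a window of length $CT_{\Rel}^N$. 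This ``no-return'' estimate is the analytic heart of the absorption statement and I would prove it by a restricted-ensemble argument: bound the return probability by $\mu_N(\cH_N\,|\,\gO_N\setminus\cH_N)$-type quantities times the mixing time within $\gO_N\setminus\cH_N$, using Proposition \ref{th:asymppf} and Theorem \ref{th:shapeandcontact1} to see that $\mu_N(\cH_N)$ is exponentially small (or, in the tie case $G(\sigma)=F(\lambda)$, polynomially small, which is why the tie-breaking convention in \eqref{def:cHN} was chosen so that $\cH_N$ is the \emph{unfavored} well).

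The main obstacle I anticipate is establishing the quasi-stationary approximation with a genuinely uniform error over the long time scale $t\asymp T_{\Rel}^N$ --- i.e.\ proving that $\nu_N$ and $\mu_N(\cdot|\cH_N)$ are close and that the sub-principal part of the Dirichlet spectrum is separated from $\theta_N$ by a polynomial-in-$N$ factor. Concretely one needs a spectral-gap estimate for the chain \emph{restricted to} $\cH_N$ (the ``within-well'' gap) that is only polynomially small; this does not follow formally from Theorem \ref{th:relax} and must be extracted by redoing the canonical-path / bottleneck argument inside $\cH_N$, showing there is no secondary bottleneck of exponential strength --- plausible from the convexity structure of $V$ on the relevant sub-interval of $[0,1]$ and the polynomial mixing of the pinning and WASEP dynamics referenced in the introduction, but requiring real work. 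Once that within-well gap is in hand, the separation of time scales $\theta_N \ll (\text{within-well gap})$ is automatic, and the rest of the argument is soft.
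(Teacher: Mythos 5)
Your proposal is correct in substance but takes a genuinely different route from the paper. The paper's proof is essentially a verification exercise: it invokes a ready-made general metastability criterion (Theorem 5.1 of \cite{lacoin2015mathematical}, itself a variant of the martingale approach of \cite{beltran2015martingale}) whose two hypotheses are exactly the two quantitative inputs you isolate, namely (1) $\mu_N(\cH_N)\to 0$ and (2) the separation of time scales $\Gap_N/\min(\Gap_{\cH_N},\Gap_{\cH_N^{\complement}})\to 0$; these are then checked using Theorem \ref{th:shapeandcontact1}, Propositions \ref{th:upbpinningstr} and \ref{th:upasymptareadom}, Proposition \ref{th:lowbdRel}, and Propositions \ref{fast1}--\ref{fast2}. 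You instead propose to re-derive the implication (hypotheses $\Rightarrow$ exponential exit law and absorption) from scratch via a quasi-stationary distribution, an approximate functional equation for $p_N(t)$, and a no-return estimate. That route is viable and is essentially how the general criterion is proved, but it is considerably more work than the paper needs, and two of its steps would require care: the identification $\theta_N=(1+o(1))/T_{\Rel}^N$ is not automatic and is really part of what the black-box theorem delivers; and the no-return estimate is cleanest phrased via reversibility as $\bar r(2,1)=\bar r(1,2)\,\mu_N(\cH_N)/\mu_N(\cH_N^{\complement})$, which makes the return probability over a window of length $CT_{\Rel}^N$ of order $\mu_N(\cH_N)=o(1)$ --- important because in the tie case $G(\sigma)=F(\lambda)$ the smallness is only polynomial, not the exponential factor $e^{2NE(\gl,\sigma)}$ you invoke. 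Finally, the ``within-well gap'' you flag as the main obstacle is not an open issue here: it is precisely the content of Propositions \ref{fast1} and \ref{fast2}, already established in Section \ref{sec:upbrel} as part of the upper bound on the relaxation time, so what your plan treats as the hard remaining work is exactly what the paper has in hand before stating the theorem.
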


\begin{rem}
 We have chosen to present the result in the above form because it comes as an easy consequence of the analysis needed to prove Theorem \ref{th:relax} and of a general criterion established in \cite{beltran2015martingale}. Pushing the analysis further and following the ideas developped in \cite[Section 1.3]{caputo2012polymer} for monotone system, one can  most likely get a more detailed picture of the metastable behavior (convergence profile to equilibrium starting from extremal conditions, exponential hitting times for the potential wells etc...).
 
\end{rem}

\subsection{Organization of the paper}

In Section \ref{sec:partitionfunction}, we gather most of the technical estimates on the partition function $Z_N(\gl,\sigma)$. This contains in particular the proof of 
Proposition \ref{th:asymppf} and Theorem \ref{th:shapeandcontact1} but also some of the estimates needed in the following sections to estimate the relaxation time.

\medskip

In Section
\ref{sec:lowbrel}, we derive the lower bound on the relaxation time in Theorem \ref{th:relax}. This is the easier of the two bounds, but perhaps the more important since the proof allows to identify exactly what slows down the relaxation to equilibrium, which is a single bottleneck in the space of configuration.

\medskip

In Section \ref{sec:upbrel}, we prove almost matching upper bound (up to correction of polynomial order). Our proof relies on the combination of several techniques (induction, chain reduction, path/flow methods...). While these techniques now became part of the classic toolbox to study mixing time, their combination and implementation to this case required an insightful understanding of the relaxation mechanism of this particular system. This is the most technical part of the paper.

\medskip

In Section \ref{sec:metastab}, we show that the estimates proved in previous sections are sufficient to check all the conditions needed to apply the general metastability results from  \cite{beltran2015martingale}.

\subsection*{About notation} In order to make the proof more readable we avoid writing integer parts and write in many instances $\sum_{i=1}^t$ for $\sum_{i=1}^{\lfloor t\rfloor}$. The constants used in the proof are not numbered the same $C$ can assume different values in different equations. We tried to underline the dependence in the parameter by writing $C(\gl)$ and $C(\gl,\sigma)$ when it has some importance, with a particular care for the dependence in $\sigma$ since some parts of the proof crucially rely on it.

 \subsection*{Acknowledgment}
The authors thank Pietro Caputo, Milton Jara,  Claudio Landim and Augusto Texeira for inspiring discussions. This work was realized in part during H.L.\ extended stay in Aix-Marseille University funded by the European Union’s Horizon 2020 research and innovation programme under the Marie Skłodowska-Curie grant agreement No 837793.

\section{Equilibrium behavior and partition function asymptotics} \label{sec:partitionfunction}

 Let us expose here our general strategy to understand the equilibrium measure, and obtain not only the asymptotics for the partition functions contained in Proposition \ref{th:asymppf} but also a variety going to be required to analyse the dynamics and prove Theorem \ref{th:relax}. Our starting point is the observation
that decomposing the path into excursions away from the $x$-axis and factorizing we obtain
\begin{equation}\label{excupath}
 Z_N(\gl, \sigma):=\sum_{k\ge 1}\sumtwo{n_1,\dots,n_k}{\sum_{i=1}^k n_k=N} \gl^{ k-1}  \prod_{i=1}^k  Z_{n_i}\left(0,\frac{\sigma n_i}{N}\right).
\end{equation}
Hence our first task is going to be to understand the detailed behavior of  $Z_N(0, \sigma)$ for a large range of $\sigma$ and then use it in the above decomposition.

\subsection{The case $\gl=0$}
This case is first treated separately. It then plays an important role to obtain estimates both for $\gl\le 2$ and $\gl>2$.
The statement is actually more precise than what is required for  Proposition \ref{th:asymppf} (in the sense that it is uniform in $\sigma$). This precision is necessary for some of the spectral gap estimates in Section \ref{sec:upbrel}.

\begin{proposition}\label{th:zerocase}  For all $K>0$,
there exists a constant $C=C_K>0$,  such that for all $N\geq 1$, and all $\sigma \in [0,K]$
\begin{equation}\label{sigmazero}
\frac{1}{C\sqrt{N}}\left(N^{-1/2}\vee \sigma \right)^2 \leq \frac{Z_N(0, \sigma)}{\exp\left(2N G(\sigma)\right)} \leq \frac{C}{\sqrt{N}}\left(N^{-1/2}\vee \sigma \right)^2
\end{equation}
 where $G(\sigma)$ is defined in \eqref{functiong}.
Moreover,  given $\gep, K>0$  then, there exists $\delta= \delta( \gep)>0$ such that we have  for all $N \ge N_0(\gep,K)$, and  $\sigma \in [0, K]$
\begin{equation}\label{areashape}
 \mu^{0,\sigma}_N\left(   \sup_{u\in[0,2]}\left| \frac{1}{N}\xi_{\lceil u N\rceil}-M_{\sigma}(u) \right|>\gep\right)\le  e^{-\delta N}.
\end{equation}

\end{proposition}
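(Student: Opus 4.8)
The plan is to treat $Z_N(0,\sigma)$ as a partition function over a single nonnegative excursion of length $2N$, and to relate it to the unconstrained (WASEP) partition function $\tilde Z_N$ of \eqref{partitilde} by a reflection-type argument. The starting point is the exact combinatorial identity expressing the constrained quantity through the unconstrained one. For the pure random walk ($\sigma=0$) one has the classical ballot/cycle lemma giving $Z_N(0,0) \asymp N^{-3/2}$, and the extra factor $(N^{-1/2}\vee \sigma)^2$ in \eqref{sigmazero} should be exactly the cost of forcing the bridge to stay positive in the presence of the drift encoded by $\sigma$. Concretely, I would first rescale: writing $\xi_{x} = \sqrt{N}\, w(x/(2N)) + (\text{macroscopic profile})$ is the wrong normalization here since the area tilt is of order $\sigma/N$ times an area of order $N^{3/2}$ in the localized-excursion regime; instead the relevant picture is a random walk bridge of length $2N$ tilted by $\exp(\tfrac{\sigma}{N} A)$, whose typical height is of order $\sqrt N$ when $\sigma$ is bounded, so the tilt contributes a bounded multiplicative distortion of the bridge measure. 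I would make this precise via a change of measure: let $\mathbf{P}^{\mathrm{br}}_{2N}$ be the uniform measure on $\widetilde\Omega_N$; then
\begin{equation*}
\frac{Z_N(0,\sigma)}{\tilde Z_N(\sigma)} = \frac{\mathbf{E}^{\mathrm{br}}_{2N}\big[\exp(\tfrac{\sigma}{N}A(\xi))\,\ind_{\{\xi\ge 0\}}\big]}{\mathbf{E}^{\mathrm{br}}_{2N}\big[\exp(\tfrac{\sigma}{N}A(\xi))\big]} = \tilde{\mathbf P}^{\sigma}_{2N}(\xi\ge 0),
\end{equation*}
where $\tilde{\mathbf P}^{\sigma}_{2N}$ is the area-tilted bridge. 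Combined with \eqref{asymparea}, the proof of \eqref{sigmazero} reduces to showing $\tilde{\mathbf P}^{\sigma}_{2N}(\xi\ge 0) \asymp N^{-1}(N^{-1/2}\vee\sigma)^2 = N^{-2}\vee (\sigma^2/N)$, uniformly for $\sigma\in[0,K]$.

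The main work is this positivity estimate, and I expect it to be the principal obstacle. For $\sigma$ of order at most $N^{-1/2}$ the tilt is negligible and one recovers the pure-bridge estimate $\asymp N^{-2}$ by the reflection principle / local CLT. For $\sigma$ in the intermediate range $N^{-1/2}\ll\sigma\le K$, I would decompose according to the first and last zeros of $\xi$ and use the following picture: to stay positive, the tilted walk must make an initial ascent of height $\Theta(\sigma\sqrt N)$... no — rather, the correct heuristic is that under the tilt the walk \emph{wants} to be positive (the drift pushes it up in the first half), so the constraint costs only the probability that the endpoints behave like excursions, which is $\asymp (\sigma \wedge \sqrt N \cdot N^{-1/2})^2 \cdot$(bulk factor). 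I would implement this rigorously by: (i) an explicit Laplace/saddle-point analysis of $\tilde Z_N(\sigma)$ using the product structure over $\pm1$ steps (each step independent with $\mathbb P(+1)\propto e^{\sigma(1-2x)/N}$ after an Abel-summation rearrangement of $\tfrac\sigma N A(\xi) = \tfrac\sigma N\sum_x \xi_x = \sum_x (\text{step}_x)\cdot(\text{something linear in } x)$), which makes $\tilde{\mathbf P}^\sigma_{2N}$ a product measure of independent $\pm1$ variables conditioned to sum to zero; (ii) for such a conditioned product measure, the positivity probability of the induced walk is handled by a KMT-type coupling with a Brownian bridge plus drift $M_\sigma'$, reducing to a Brownian computation of $\mathbf P(\text{bridge}+\text{smooth drift}\ge 0)$ whose scale is governed by the behavior of the drift near the endpoints $0$ and $2$ — and near those endpoints the drift $M_\sigma(u)$ has derivative $\tanh\sigma \asymp \sigma$, giving precisely the $\sigma^2/N$ scaling. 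I would isolate the endpoint contributions and bound the bulk crossing probability below by a constant using the fact that $M_\sigma(u)>0$ strictly on $(0,2)$.

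For the concentration statement \eqref{areashape}, the plan is a standard large-deviations argument. Since $\tfrac\sigma N A(\xi) = \sum_{x} c_{x,N}\,\epsilon_x$ is linear in the i.i.d.\ increments $\epsilon_x$ (after Abel summation, with $c_{x,N}$ bounded), $\mu^{0,\sigma}_N$ is a product measure on increments conditioned on $\{\sum\epsilon_x = 0\}$ and on $\{\xi\ge0\}$, and both conditioning events have probability $\ge e^{-C\log N}$ by \eqref{sigmazero} and the local CLT. Hence any event of unconditioned probability $\le e^{-\delta' N}$ still has conditioned probability $\le e^{-\delta N/2}$. It therefore suffices to prove the deviation bound for the \emph{unconditioned} tilted product measure, i.e.\ that $\sup_u|\tfrac1N\xi_{\lceil uN\rceil} - M_\sigma(u)|>\gep$ has exponentially small probability; this is Cramér's theorem applied at each fixed $u$ (the log-moment generating function of $\tfrac1N\xi_{\lceil uN\rceil}$ has a unique minimizer at $M_\sigma(u)$ by the definition $M_\sigma(u)=\int_0^u\tanh(\sigma(1-x))\,dx$ and the identity $L'=\tanh$), upgraded to a uniform-in-$u$ statement by a union bound over a mesh of $O(N)$ values of $u$ together with the $1$-Lipschitz (in $x/N$) control of $x\mapsto\xi_x/N$. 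The uniformity in $\sigma\in[0,K]$ comes from the continuity of the rate functions in $\sigma$ on the compact $[0,K]$; I would note that $\delta$ may be taken independent of $\sigma$ by compactness, while possibly shrinking it. The one delicate point is that for small $\sigma$ the profile $M_\sigma$ is itself of order $\sigma$, so for $\sigma \le \gep$ the statement \eqref{areashape} is almost vacuous (the walk is $O(\sqrt N)$, hence within $\gep N$ of both $0$ and $M_\sigma$), and the genuine content is for $\sigma$ bounded away from $0$ — which is exactly where Cramér applies cleanly.
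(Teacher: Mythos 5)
Your overall architecture is the same as the paper's: Abel summation turns the area tilt into a product of independent tilted increments, the problem reduces to a sharp two-sided estimate on the probability that the tilted bridge stays positive, and \eqref{areashape} follows from Hoeffding/Cram\'er for the product measure plus the fact that the conditioning events are only polynomially unlikely. However, there is a quantitative error in your reduction that would derail the proof if carried through: the target you state for the positivity probability is off by a factor of $N$. Since $\tilde Z_N(\sigma)\asymp N^{-1/2}e^{2NG(\sigma)}$ and \eqref{sigmazero} asserts $Z_N(0,\sigma)\asymp N^{-1/2}(N^{-1/2}\vee\sigma)^2e^{2NG(\sigma)}$, the ratio $\tilde{\mathbf P}^{\sigma}_{2N}(\xi>0)$ must be $\asymp (N^{-1/2}\vee\sigma)^2=N^{-1}\vee\sigma^2$, not $N^{-2}\vee(\sigma^2/N)$ as you write. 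Your claimed ``pure-bridge estimate $\asymp N^{-2}$'' at $\sigma\lesssim N^{-1/2}$ is the tell: by the cycle lemma the proportion of bridges of length $2N$ that are strictly positive in the interior is $\asymp N^{-1}$ (the paper uses the exact value $\tfrac{1}{2(2N-1)}$). The correct heuristic is one factor of $\sigma\vee N^{-1/2}$ per endpoint, with no extra $N^{-1}$.

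The second, more structural gap is the mechanism for the positivity estimate. A KMT coupling with ``Brownian bridge plus smooth drift $\ge 0$'' cannot work as stated: a Brownian bridge from $0$ to $0$ is nonnegative with probability zero, so the entire content of the estimate lives at lattice scale within $O(1)$ of the endpoints, precisely where the $O(\log N)$ KMT error is fatal. You would in any case have to fall back on discrete endpoint estimates, and this is where the real work is. The paper does this with purely discrete tools: FKG comparison with the untilted bridge plus the reflection principle handles $\sigma\sqrt N=O(1)$; for $\sigma\sqrt N$ large it splits the bridge at the midpoint into two independent halves, writes the probability as $\sum_x \nu_N(S_N=x;\,S>0 \text{ on } \lint 1,N-1\rint)^2$, and uses the ballot theorem for the lower bound. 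The matching \emph{upper} bound in the intermediate regime $N^{-1/2}\ll\sigma\ll 1$ --- which your sketch does not address at all (your endpoint discussion only produces a lower bound) --- requires an additional idea: the paper introduces an auxiliary endpoint tilt $\nu^x_N$ and compares with a constant-drift walk $\bQ_{N,x,\sigma}$ to prove $\nu_N(S>0\mid S_N=x)\le 10\big(\tfrac{x+2\sqrt N}{N}+\sigma\big)$, then closes the square using the uniform local bound $\nu_N(S_N=x)\le CN^{-1/2}$. Without a substitute for this step, your plan establishes at best the lower bound in \eqref{sigmazero}. The treatment of \eqref{areashape} is fine and coincides with the paper's.
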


\medskip

\begin{proof}
Our proof follows the mainline of \cite[Proposition 3]{LabbeWABridge} with an 
additional care needed to deal with the positivity constraint.
Hence the first step is to reduce the statement to the estimate of the probability  of a given event.
We let $\bP$ denote the distribution of the nearest-neighbor symmetric simple random walk in $\bbZ$ starting from $0$. Given a simple random walk trajectory we define $A_N(S):= \sum_{n=1}^{2N-1} S_n +\frac{S_{2N}}{2}$, to be the algebraic area between the graph of $S=(S_n)^{2N}_{n=1}$  and the $x$-axis.
We have (the tilt by $-\sigma S_{2N}$ having no effect)
\begin{equation}
 Z_N(0,\sigma)= \bE\left[ e^{\frac{\sigma A_N(S)}{N}-\sigma S_{2N}} \ind_{\{S_{2N}=0 \ ; \ \forall n\in \lint 1,2N-1\rint, \ S_{n}>0 \}}\right].\label{recenterarea}
\end{equation}
We introduce $\nu_N$ a probability which is absolutely continuous with respect to $\bP$ with density given by
\begin{equation}\label{defNu}
 \frac{\dd \nu_N}{\dd \bP}(S):= \frac{e^{\frac{\sigma A_N(S)}{N}-\sigma S_{2N}}}{\bE\left[ e^{\frac{\sigma A_N(S)}{N}-\sigma S_{2N}}\right]}.
\end{equation}
The tilt by $-\sigma S_{2N}$ has the effect of recentering the distribution of $S_{2N}$ and to make the event  $\{S_{2N}=0\}$ typical under $\nu_N$.
Indeed let $(X_k)_{1 \leq k\leq 2N}$ denote the increments of our random walk, and
we have 
\begin{equation}\label{incrementNu}
 \frac{\sigma A_N(S)}{N}- \sigma S_{2N}=\sum_{k=1}^{2N}h^N_k X_k \quad  \text{ where } \quad h^N_k:=\frac{\sigma}{N}\left(N-k+\frac{1}{2}\right).
\end{equation}
We have 
\begin{equation}\label{partfunzeropinning}
  Z_N(0,\sigma)= \bE\left[ e^{\frac{\sigma A_N(S)}{N}-\sigma S_{2N}}\right]\nu_{N}\left( S_{2N}=0 \ ; \ \forall n\in \lint 1,2N-1\rint, \ S_{n}>0  \right).
\end{equation}
Recalling the definition of $L$ in \eqref{functiong} we have
\begin{equation}
 \bE\left[ e^{\frac{\sigma A_N(S)}{N}-\sigma S_{2N}}\right]
 =\exp\left( \sum^{2N}_{k=1} L(h^N_k) \right).\label{discreteareaenergy}
\end{equation}
By the approximation of Riemann integral  and the Taylor-Lagrange inequality
(we have $L''(x)=1-\tanh^2(x) \in [0,1]$) we obtain
\begin{align}
 \Big\vert \sum^{2N}_{k=1} L(h^N_k)- 2N \int_0^1 L\left(\sigma(1-2x)\right)dx \Big\vert 
 \leq \frac{\sigma^2}{4N},  \label{diffapprox} 
\end{align}
 and hence that
\begin{equation}
\left|\log \bE\left[ e^{\frac{\sigma A_N(S)}{N}-\sigma S_{2N}}\right]- 2N G(\sigma)\right|\le \frac{\sigma^2}{4N}.
\end{equation}
The first term in the r.h.s. in \eqref{partfunzeropinning} can be replaced by $e^{2N G(\sigma)}$ to obtain an asymptotic equivalent. The asymptotic equivalent of the second term $\nu_N(\cdots)$ is the object of Proposition \ref{th:lemnun} which allows to conclude the proof of \eqref{sigmazero}.

 Let us now prove \eqref{areashape}.
The rewriting of  $Z_N(0,\sigma)$ in \eqref{partfunzeropinning}  can be performed for the partition function integrated against an arbitrary event $A$  yields $S_{2N}=0$
\begin{equation}
 \mu_N^{0,\gs}(A)= \nu_{N}\left(   A  \ | \  S_{2N}=0 \ ; \ \forall n\in \lint 1,2N-1\rint, \ S_{n}>0  \right)
 \le C_{ K} N^{3/2}\nu_{N}( A ).
\end{equation}
where for the last inequality we used Proposition \ref{th:lemnun} below.
Hence it is sufficient for us to show that 
\begin{equation}
  \nu_N\left(   \sup_{u\in[0,2]}\left| \frac{1}{N}\xi_{\lceil u N\rceil}-M_{\sigma}(u) \right|>\gep\right)\le  2Ne^{-2\delta N}.
\end{equation}
 Since $M_{\gs}$ is $1-$Lipschitz, by union bound it is sufficient to check that that 
 \begin{equation}
  \sup_{n\in \lint 0,2N \rint} \nu_N\left(  \left|\xi_{n}- N M_{\sigma}(n/N) \right|> N\gep/2\right)\le e^{-2\delta N},
 \end{equation}
 where $\delta= \gep^2/130$ for  all $N \ge N_0 (\gep, K)$.
This is a simple consequence of Hoeffding's inequality  (see e.g.\ \cite[Proposition 1.8]{pete2019probability}) for a sum of bounded  independent variables.
The only thing to check is that $N M_{\sigma}(n/N)$ approximates well the expectation of $\xi_n$ (that is, that the difference is of a smaller order than $N$).
By Riemann sum approximation we have
\begin{equation}
\left| \nu_N \left[ \xi_n \right]-N M_{\sigma}(n/N) \right|=  \left|\sum_{k=1}^n  \tanh (h^N_k) -N M_{\sigma}(n/N) \right|\le \frac{\sigma^2}{N},
\end{equation}
which allows to conclude.

\end{proof}

\begin{proposition}\label{th:lemnun}
 With the definitions above, there exists a constant $C=C_K$ such that for every $N\ge 1$ and $\sigma\in[0,K]$
 \begin{equation}\label{compar}
 \frac{1}{C\sqrt{N}}(\sigma\vee N^{-1/2})^2 \le \nu_{N}\left( S_{2N}=0 \ ; \ \forall n\in \lint 1,2N-1\rint, \ S_{n}>0  \right)\le \frac{C}{\sqrt{N}}(\sigma\vee N^{-1/2})^2 .
 \end{equation}

\end{proposition}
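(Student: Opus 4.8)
The quantity to estimate is $\nu_N\bigl(S_{2N}=0;\ \forall n\in\lint 1,2N-1\rint,\ S_n>0\bigr)$, the probability under the tilted walk that a bridge of length $2N$ stays strictly positive in the interior. The plan is to split the analysis according to whether $\sigma$ is of smaller or larger order than $N^{-1/2}$, and in each regime compare $\nu_N$ to the untilted walk $\bP$ via the explicit Radon--Nikodym density. First I would record from \eqref{incrementNu} that under $\nu_N$ the increments $(X_k)$ are still \emph{independent} (the density factorizes as $\prod_k e^{h_k^N X_k}$ up to normalization), so under $\nu_N$ each $X_k=\pm1$ with $\bP_{\nu_N}(X_k=1)=\tfrac{e^{h_k^N}}{e^{h_k^N}+e^{-h_k^N}}=\tfrac{1+\tanh h_k^N}{2}$. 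In particular $\nu_N$ is a time-inhomogeneous random walk with drifts $\tanh h_k^N = \tanh(\tfrac{\sigma}{N}(N-k+\tfrac12))$, and the mean path is exactly the Riemann sum approximating $N M_\sigma(n/N)$, which is a positive concave bump vanishing at both endpoints, with maximum of order $\sigma N$ (for $\sigma$ bounded) — or order $\sigma^2 N$... more precisely $M_\sigma$ has height comparable to $\min(\sigma,1)$ times $\sigma$, i.e. the bump has height $\asymp \sigma^2 N$ when $\sigma\to0$. This is the heart of why the answer scales like $\sigma^2$.

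The small-$\sigma$ regime $\sigma\le N^{-1/2}$: here the tilt is a bounded perturbation of the symmetric walk on the relevant scale (each $h_k^N=O(N^{-1/2})$, so $\sum_k (h_k^N)^2 = O(1)$), and one can compare $\nu_N$ to $\bP$ directly: the density $\tfrac{\dd\nu_N}{\dd\bP}$ restricted to the bridge-positive event is bounded above and below by constants times the density ratio, so $\nu_N(\cdots)\asymp \bP(S_{2N}=0;\ S_n>0\ \forall n)$. The latter is the classical probability that a simple random walk bridge is a positive excursion (a Dyck-path count), which is $\asymp N^{-3/2}$. Since here $(\sigma\vee N^{-1/2})^2 = N^{-1}$, we need $N^{-3/2} \asymp N^{-1/2}\cdot N^{-1}$, which checks out. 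The only care needed is controlling the density ratio uniformly on the event, which follows from \eqref{diffapprox}-type Riemann-sum bounds and the fact that on the positive-bridge event $S_{2N}=0$ so the linear term $-\sigma S_{2N}$ vanishes identically.

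The main-$\sigma$ regime $N^{-1/2}\le\sigma\le K$: here the tilted walk has a genuine drift pushing it up to a deterministic bump of height $\asymp\sigma^2 N \gg \sqrt N$, so being a positive bridge is no longer a rare event in the same way — the walk \emph{wants} to be positive in the bulk, and the cost is localized at the two endpoints where the walk must leave $0$, climb, and return to $0$ staying positive near $x=0$ and $x=2N$. The natural approach: decompose the trajectory near each endpoint. Near $x=0$, on scale $\Theta(1/\sigma^2)$ or so, the drift $\tanh h_k^N\approx \tanh\sigma$ is roughly constant and of order $\min(\sigma,1)$; the probability that a walk with small positive drift $\delta$ starting at $0$ stays positive forever is $\asymp\delta$ (standard for biased walks / ballot-type estimates), giving a factor $\asymp\sigma$ from each of the two endpoints, hence $\asymp\sigma^2$ total. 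Combined with the central local-CLT factor $\asymp N^{-1/2}$ for the pinning constraint $S_{2N}=0$ (the bulk fluctuations are Gaussian of scale $\sqrt N$ and the endpoint is at a typical location given the bump shape), this yields $\nu_N(\cdots)\asymp \sigma^2 N^{-1/2}$, matching \eqref{compar}. Rigorously I would (i) use a reflection/decomposition or an $h$-transform (Doob transform by the harmonic function for the killed-at-$0$ walk) to factor out the positivity constraint near each endpoint, picking up the $\asymp(\sigma\text{-drift})$ factors; (ii) use a local central limit theorem for the (independent, bounded, inhomogeneous) increments to get the $N^{-1/2}$ from the terminal pinning; (iii) patch the endpoint regions to the bulk with FKG/monotonicity or with crude union bounds showing the bulk positivity is not the binding constraint (the bump is high, so deviations below $0$ in the bulk are exponentially unlikely — this is exactly the content of \eqref{areashape}).

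The hard part will be the main-$\sigma$ regime, specifically making the two-endpoint decomposition rigorous while keeping uniform control over the full range $N^{-1/2}\le\sigma\le K$: one must interpolate between the "barely biased" behavior near $\sigma\sim N^{-1/2}$ (where endpoint and bulk scales are comparable and the answer degenerates to the $N^{-3/2}$ Dyck regime) and the "strongly biased" behavior at $\sigma\sim 1$. The cleanest route is probably to follow \cite[Proposition 3]{LabbeWABridge} closely for the bulk and the LCLT, and handle the positivity constraint by comparison with a \emph{homogeneous} biased walk of drift $c\sigma$ near each end (legitimate since $h_k^N$ varies by $O(1)$ over the $O(\sigma^{-2})$ endpoint window when $\sigma$ bounded, and when $\sigma\sim N^{-1/2}$ the window is all of $\lint 0,2N\rint$ and one reverts to the comparison with $\bP$ as in the small-$\sigma$ case). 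Throughout, the constant $C_K$ absorbs the dependence on $K=\sup\sigma$, and one should double-check that none of the comparisons degrade as $\sigma\to K$.
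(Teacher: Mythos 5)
Your scaling heuristics are right and several of your ingredients (ballot-type estimates, FKG, comparison with a homogeneous biased walk, extra tilting) do appear in the paper's proof, but your architecture differs from the paper's and the proposal leaves the decisive step unspecified. The paper does \emph{not} decompose near the two endpoints. Instead it exploits the antisymmetry $h^N_{2N-k+1}=-h^N_k$ of the drift profile: setting $\tilde S_n=S_{2N-n}-S_{2N}$, the two halves $(S_n)_{n\le N}$ and $(\tilde S_n)_{n\le N}$ are i.i.d.\ under $\nu_N$, so the target probability factorizes exactly as $\sum_{x\ge1}\nu_N(S_N=x;\ S_n>0\ \forall n\le N-1)^2$. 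This single identity is what glues your ``two endpoint factors of $\sigma$'' to the ``LCLT factor $N^{-1/2}$'': the lower bound follows from FKG plus the ballot theorem ($\nu_N(\mathrm{pos}\,|\,S_N=x)\ge x/N$), concentration of $S_N$ around its mean $\asymp\sigma N$, and Cauchy--Schwarz; the upper bound from the uniform local bound $\nu_N(S_N=x)\le CN^{-1/2}$ (via characteristic functions) together with the pointwise estimate $\nu_N(\mathrm{pos}\,|\,S_N=x)\le 10\bigl(\tfrac{x+2\sqrt N}{N}+\sigma\bigr)$, itself obtained by an extra tilt and comparison with a homogeneous walk. Your plan to ``patch the endpoint regions to the bulk with FKG/monotonicity or crude union bounds'' is exactly the point where a proof is needed and none is sketched: the two positivity constraints and the terminal pinning are not independent, and an $h$-transform/LCLT patching argument in the inhomogeneous setting, uniformly over $N^{-1/2}\le\sigma\le K$, is considerably more delicate than the midpoint factorization.

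There is also a concrete error in your small-$\sigma$ step. On the event $\{S_{2N}=0\}$ the density $\dd\nu_N/\dd\bP$ equals $e^{\sigma A_N(S)/N}/\bE[e^{\cdots}]$, and $A_N(S)$ can be as large as $N^2$ on a positive bridge, so the density can reach $e^{\sigma N}\sim e^{\sqrt N}$ when $\sigma\sim N^{-1/2}$; it is \emph{not} pointwise bounded on the event. The comparison $\nu_N(\cdots)\le C\,\bP(\cdots)$ in this regime therefore requires a bounded exponential moment of $\sigma A_N/N\asymp A_N/N^{3/2}$ under the positive-excursion law -- true, but a genuine estimate, not a pointwise density bound. (The lower bound is fine after restricting to $\{A_N\le CN^{3/2}\}$; note the paper sidesteps the whole issue, getting its small-$\sigma$ lower bound $CN^{-3/2}$ from FKG alone and its upper bound from the conditional estimate above, which never compares to the untilted excursion measure.)
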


\begin{proof}
 First we show that we can find a constant $C$ such that for every $\sigma\in[0,K]$
 \begin{equation}\label{zigma}
 \frac{1}{C} N^{-1/2} \le \nu_N(S_{2N}=0)\le C N^{-1/2}.
 \end{equation}
 This follows from  the proof of \cite[Lemma 11]{LabbeWABridge}, a quick way to check is via Fourier transform.
Grouping the increments of $S_{2N}$ with opposite drifts we obtain (since $S_{2N}\in 2\bbZ$ we only need to average over an interval of length $\pi$)
 \begin{equation}\label{theproba}
  \nu_N(S_{2N}=0)=  \frac{1}{\pi}\int_{[-\pi/2,\pi/2] \nu_N[e^{i\xi S_{2N}}]\dd \xi }
  =  \frac{1}{\pi} \int_{[-\pi/2,\pi/2]} \prod_{k=1}^{N} \left(1-\alpha_{k,N}(1-\cos (2\xi))    \right) \dd \xi.
 \end{equation}
where  $$\alpha_{k,N}=1-\nu_N[X_k+X_{2N-k+1}=0]=\frac{1}{2}\left(1-  \tanh^2 (h^N_k)\right)
.$$
This shows that \eqref{theproba} is increasing in $\sigma$ and we can obtain the upper and lower bounds by considering the cases $\sigma=K$ and $\sigma=0$ respectively. This is then a standard computation  to check that there exists a constant $C$ (depending on $K$) such that for every $\xi \in [-\pi/2,\pi/2]$
\begin{equation}
  e^{- CN|\xi|^2 }\le   \nu_N [e^{i\xi S_{2N}}] \le  e^{- \frac{N}{C}|\xi|^2 },
\end{equation}
and conclude.
Another thing we can deduce from the above computation 
and using the fact that $S_{2N}-S_N$ is independent from $S_N$ and has the same distribution as $-S_N$ is that 
\begin{equation}
 | { \nu_N}[e^{i\xi S_{N}}]|^2= \nu_N[e^{i\xi S_{2N}}] \le  e^{- \frac{N}{C}|\xi|^2 },
\end{equation}
and thus we have 
\begin{equation}\label{unifbound}
   \nu_N(S_{N}=x)=  \frac{1}{\pi}  \int_{[-\pi/2,\pi/2]} \nu_N[e^{i\xi(S_{N}-x)}] \dd \xi\le  \frac{1}{\pi} \int_{[-\pi/2,\pi/2]} | \nu_N[e^{i\xi(S_{N}-x)}]| \dd \xi \le C N^{-1/2}.
\end{equation}
Our second observation uses the FKG inequality (cf.  \cite[Lemma 3.3]{lacoin2016mixing}) for the measure  $\bP\left( \cdot \ | \  S_N=x  \right)$. Note that for every $\sigma>0$, the density of $\nu_N$ with respect to $\bP$ is an increasing function for the natural partial order on $S$. Hence from the FKG inequality we have 
\begin{multline}
  \nu_{N}\left(  \forall n\in \lint 1,2N-1\rint, \ S_{n}>0 \ | \ S_{2N}=0  \right)\\
  \ge   \bP\left(  \forall n\in \lint 1,2N-1\rint, \ S_{n}>0 \ | \ S_{2N}=0 \right)=\frac{1}{2(2N-1)}.
 \end{multline}
  where the last equality is easily obtained combining the reflection principle and some basic combinatorics (see e.g.  \cite[Theorem 4.3.1]{Durrett}).
 Thus there is a constant for which for every $\sigma \in [0, K]$
 \begin{equation}
 \nu_{N}\left( S_{2N}=0 \ ; \ \forall n\in \lint 1,2N-1\rint, \ S_{n}>0  \right)\ge CN^{-3/2}.
 \end{equation}
As a consequence, we have to prove the lower bound in \eqref{compar} only when $\sigma \sqrt{N}$ is large.
Let $\tilde S_n \colonequals S_{2N-n}-S_{2N}$. Note that $(\tilde S_n)_{n=1}^N$ and $(S_n)_{n=1}^N$ are independent and identically distributed. Hence we have
 \begin{multline}\label{decompox}
  \nu_{N}\left( S_{2N}=0 \ ; \ \forall n\in \lint 1,2N-1\rint, \ S_{n}>0  \right)\\=
  \nu_{N}\left( S_N=\tilde S_N \ ; \  \forall n\in \lint 1,N\rint, \ S_{n}, \tilde S_n >0 \right)\\
  =\sum_{x=1}^N   \nu_{N}\left( S_N=x \ ; \  \forall n\in \lint 1,N-1\rint, \ S_{n} >0 \right)^2.
 \end{multline}
To obtain a lower-bound,  the FKG inequality applied to the measure  $\bP\left( \cdot \  \ | \  S_N=x  \right)$ yields  
\begin{equation}
 \nu_{N}\left(   \forall n\in \lint 1,N-1\rint, \ S_{n} >0  \ | \  S_N=x  \right)\ge  \bP\left(   \forall n\in \lint 1,N-1\rint, \ S_{n} >0  \ | \  S_N=x  \right)= \frac{x}{N},
 \end{equation}
 where the last equality is the ballot theorem.
Now as we have for all $\sigma\in [0,K]$
\begin{equation}\label{espevar}
 \nu_N(S_N)=\sum^N_{k=1}\tanh(h^N_k)\ge c\sigma N \quad   \text{ and } \quad  \Var_{\nu_N}(S_N)\le N.
\end{equation}
and thus we  obtain that 
\begin{equation}
 \nu_N( S_N \in \{ |S_N-\nu_N(S_N)| \le  \sqrt{2N}\} )\ge 1/2.
\end{equation}
Hence assuming that $c\sigma N\ge  2\sqrt{2N}$ and using Cauchy-Schwartz inequality we have 
\begin{multline}
   \nu_{N}\left( S_{2N}=0 \ ; \ \forall n\in \lint 1,2N-1\rint, \ S_{n}>0  \right)
   \ge N^{-2}\sum_{ |x-\nu_N(S_N)| \le  \sqrt{2N}}    \nu_{N}( S_N=x)^2 x^2 \\
   \ge N^{-2} (c\sigma N- \sqrt{2N})^2 \sum_{ |x-\nu_N(S_N)| \le  \sqrt{2N}}    \nu_{N}( S_N=x)^2
   \ge c'N^{-1/2} \sigma^2,
\end{multline}
which is the desired lower bound. 
For the upper-bound, we can assume that $\sigma\le 1/20$ since in all other cases 
\eqref{zigma} is sufficient to conclude.
  Our aim is to prove that for every $x\ge 0$
   \begin{equation}\label{ouraim}
    \nu_{N}\left(   \forall n\in \lint 1,N-1\rint, \ S_{n} >0 \ | \  S_N = x  \right)  \le 10\left(\frac{x+2\sqrt{N}}{N}+\sigma\right).
 \end{equation}
 This is trivial when $x\ge N/10$, so we may assume that $x\le N/10$.
We let $\nu^x_N$ the measure defined by adding an extra tilt at the end point setting 
\begin{equation}
\frac{ \dd \nu^{x}_N}{\dd \nu_N}(S)= \frac{1}{J_{x,N}}e^{\frac{3 (x+\sqrt{N}) S_N}{N}} \quad \text{ with }  J_{x,N}= \nu_N\left(e^{\frac{3 (x+\sqrt{N}) S_N}{N}} \right).
\end{equation}
The average of $S_N$ under this alternative measure is given by
\begin{equation}
 \nu^x_N(S_N)= \sum_{k=1}^N \tanh\left( h^N_k+ \frac{3(x+\sqrt{N})}{N}\right)\ge \frac{\sigma N}{4}+ 2(x+\sqrt{N}).
\end{equation}
Since the variance is smaller than $N$ we have in particular   $\nu^x_N(S_N\ge x)\ge 1/2$ and hence
\begin{multline}\label{combi1}
 \nu_{N}\left(   \forall n\in \lint 1,N-1\rint, \ S_{n} >0 \ | \  S_N = x  \right)
  \le  \nu^x_{N}\left(   \forall n\in \lint 1,N-1\rint, \ S_{n} >0 \ | \  S_N = x  \right)\\
 \le  \nu^x_{N}\left(   \forall n\in \lint 1,N-1\rint, \ S_{n} >0 \ | \  S_N \ge x  \right)
 \le 2 \nu^x_{N}\left(   \forall n\in \lint 1,N\rint, \ S_{n} >0 \right).
\end{multline}
To bound the last estimate, we can compare $\nu^x_{N}$ with $\bQ_{N,x,\sigma}$ under which $S$ is a simple random walk with constant tilt equal to $\frac{3 (x+\sqrt{N})}{N}+ \gs$, that is, increments are IID and $$\bQ_{N,x,\sigma}(S_1=\pm 1)= \frac{e^{\pm\left(\frac{3 (x+\sqrt{N})}{N}+ \gs\right)}}{2\cosh\left( \frac{3 (x+\sqrt{N})}{N}+ \gs\right)}.$$
We have 
\begin{equation}\label{combi2}
 \nu^x_{N}\left(   \forall n\in \lint 1,N\rint, \ S_{n} >0 \right)\le
 \bQ_{N,x,\sigma}\left( \forall n\in \lint 1,N\rint , \ S_{n} >0\right)=\frac{1}{N} \bQ_{N,x,\sigma}(S_N\vee 0).
\end{equation}
The equality above is simply a consequence of the fact that by the ballot Theorem, for every $y\ge 0$
$$ \bQ_{N,x,\sigma}\left( \forall n\in \lint 1,N\rint , \ S_{n} >0  \ | \ S_N=y\right)=\frac{y}{N}.$$
 Now we have (using Cauchy-Schwartz inequality, the inequality $\sqrt{a+b}\le \sqrt{a}+\sqrt{b}$ and bounding the variance by $N$) 
\begin{multline}
 \bQ_{N,x,\sigma}(S_N\vee 0)\le \left( \bQ_{N,x,\sigma}(S^2_N) \right)^{1/2}\le \bQ_{N,x,\sigma}(S_N)
 + \sqrt{ \Var_{ \bQ_{N,x,\sigma}}(S_N)}\\ \le N\tanh\left(\frac{3 (x+\sqrt{N})}{N}+ \gs\right)+\sqrt{N}.
 \end{multline}
The inequality \eqref{ouraim} follows by combining \eqref{combi1} and \eqref{combi2}.
We are now ready to conclude our upper bound proof. Recall \eqref{decompox}, and
from \eqref{unifbound} we have 
\begin{multline}
 \sum_{x=1}^N   \nu_{N}\left( S_N=x \ ; \  \forall n\in \lint 1,N-1\rint, \ S_{n} >0 \right)^2\\
 \le   C N^{-1/2} \sum_{x=1}^N \nu_{N}\left( S_N=x\right)  \nu_{N}\left( \forall n\in \lint 1,N-1\rint, \ S_{n} >0 \ | \ S_N=x \right)^2\\
 \le C N^{-1/2} \nu_N \left[  \left(\frac{S_N  + 2 \sqrt{N}}{N}+\sigma\right)^2 \right].
\end{multline}
where the second inequality is a direct consequence of \eqref{ouraim}. The upper bound in
\eqref{compar} then follows from our estimates on variance of $S_N$ \eqref{espevar}  and that on the expectation  since from the explicit expression in \eqref{espevar} we can deduce that $\nu_N(S_N) \le \gs N .$

\end{proof}

\noindent Now it remains to provide an upper bound on the partition function valid for every $\sigma  > 0$ and $\gl  > 0$. We treat separately the cases $F(\gl)\ge G(\sigma)$ and $G(\sigma)> F(\gl)$.

\subsection{The case when $F(\gl)\ge G(\sigma)$}\label{sec:pinningstr}
This subsection is devoted to the proof of the upper bound on the partition function when $  F(\lambda) \ge G(\sigma)$, that is

\begin{proposition} \label{th:upbpinningstr}
When $G(\sigma) \le F(\lambda)$ and $\gl>2$, there exists a constant $C(\lambda)>0$, such that for all $N\geq 1$,
\begin{equation}\label{zupper}
Z_N(\lambda, \sigma)\leq C(\lambda) \exp\left(2NF(\lambda)\right).
\end{equation}
 Moreover when $G(\sigma) < F(\lambda)$, then for every $\gep>0$ there exists $\delta>0$ such that for all $N$ sufficiently large,
\begin{equation}\label{pindom:excurdecay}
 \mu_N\left( L_{\max}(\xi) \ge \gep N \right) \le  e^{-\delta N}.
\end{equation}
When $G(\sigma) = F(\lambda)$, for all $N\ge N_0(\gep)$ sufficiently large we have
\begin{equation}
\begin{split}\label{equalcase}
 \mu_N\left( L_{\max}(\xi) \in [\gep N,(1-\gep) N] \right) &\le e^{-\delta N}, \\
\frac{1}{C(\gl) \sqrt{N}} \le \mu_N\left( L_{\max}(\xi)> (1-\gep )N \right)&\le  \frac{C(\gl)}{\sqrt{N}}.  
\end{split}
\end{equation}

\end{proposition}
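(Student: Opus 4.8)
The plan is to first prove the partition–function bound \eqref{zupper}, and then to read off the three statements on $L_{\max}$ from it, using Proposition~\ref{th:zerocase} and the pure–pinning asymptotics \eqref{asymppinning}. Write $\mu_N=\mu_N^{\lambda,\sigma}$ and, for $\ell\in\lint 1,N\rint$, let $Z_N^{(\ell)}$ denote the restriction of the sum defining $Z_N(\lambda,\sigma)$ to configurations with $L_{\max}(\xi)=\ell$, so $Z_N(\lambda,\sigma)=\sum_{\ell=1}^N Z_N^{(\ell)}$. The key device is the following refinement of \eqref{excupath}: singling out the leftmost excursion of maximal half–length $\ell$ and grouping the remaining excursions into the two sub–paths to its left and right gives
\[
Z_N^{(\ell)}\ \le\ Z_\ell\!\Big(0,\tfrac{\sigma\ell}{N}\Big)\sum_{m=0}^{N-\ell}\widehat Z_m\,\widehat Z_{N-\ell-m},\qquad \widehat Z_m:=\ind_{\{m=0\}}+\lambda\,Z_m\!\Big(\lambda,\tfrac{\sigma m}{N}\Big)\ind_{\{m\ge 1\}},
\]
which is an equality when $\ell>N/2$ (the maximal excursion is then unique); note that the tilt parameter stays $\sigma m/N$ rather than $\sigma m/(N-\ell)$ because of the global rescaling of the area in \eqref{defmun}. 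Two facts are used throughout: convexity of $G$ with $G(0)=0$ gives $\beta G(\beta\sigma)\le\beta G(\sigma)\le\beta F(\lambda)$ for $\beta\in[0,1]$, hence $2\ell\,G(\sigma\ell/N)\le\tfrac{2\ell^2}{N}F(\lambda)$; and Proposition~\ref{th:zerocase} gives $Z_\ell(0,\sigma\ell/N)\le C\big(\ell^{-3/2}+\sigma^2\ell^{3/2}N^{-2}\big)e^{2\ell G(\sigma\ell/N)}$, so for $\ell$ of macroscopic size this prefactor is $O(N^{-1/2})$ — the source of the $N^{-1/2}$ in \eqref{equalcase}.

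I would establish \eqref{zupper} by induction on $N$, with inductive hypothesis $Z_M(\lambda,\tau)\le C(\lambda)e^{2MF(\lambda)}$ for all $M<N$ and all $\tau\in[0,\sigma]$ (legitimate since $G(\tau)\le G(\sigma)\le F(\lambda)$), splitting $\sum_\ell Z_N^{(\ell)}$ by the size of $\ell$. For $\ell$ below a fixed constant $L_0$, the crude area bound $A\le 2\ell^2$ on an excursion of half–length $\ell$ gives $Z_\ell(0,\sigma\ell/N)\le e^{2\sigma\ell^2/N}Z_\ell(0,0)$, hence (using $n_i^2\le L_0 n_i$ and $\sum_i n_i=N$) $\sum_{\ell\le L_0}Z_N^{(\ell)}\le e^{2\sigma L_0}Z_N(\lambda,0)\le C(\lambda,\sigma,L_0)e^{2NF(\lambda)}$ by \eqref{asymppinning}. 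For $\ell>N/2$ the displayed identity is exact, the inductive hypothesis turns the convolution into $O\!\big(N\,e^{2(N-\ell)F(\lambda)}\big)$, the exponent $2\ell G(\sigma\ell/N)+2(N-\ell)F(\lambda)$ is $\le 2NF(\lambda)-(N-\ell)F(\lambda)$, and the $Z_\ell$ prefactor is $O(N^{-1/2})$, so summing over $\ell$ gives a contribution $\le C(\lambda,\sigma)N^{-1/2}e^{2NF(\lambda)}$. The difficult range is $L_0<\ell\le N/2$: here the exponential gain is only $e^{-cF(\lambda)\ell}$, while the convolution over the positions of the remaining excursions costs a prefactor of order $N$; closing the induction with the \emph{constant} $C(\lambda)$, rather than a polynomial correction, requires controlling the over–counting produced by configurations with many excursions of intermediate size — done by decomposing further according to the number of excursions of half–length exceeding $L_0$ and using that each such excursion carries a weight $\lambda\ell^{-3/2}$ that the underlying renewal (pure–pinning) structure cannot sustain. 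This is the technical core and the main obstacle; the other ranges are routine once the convexity of $G$ is available.

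Granting \eqref{zupper}, the three $L_{\max}$ estimates are elementary. From the displayed inequality, $\widehat Z_m\le\lambda C(\lambda)e^{2mF(\lambda)}$ (now available from \eqref{zupper}) and the Proposition~\ref{th:zerocase} bound on $Z_\ell$, one obtains $Z_N^{(\ell)}\le C(\lambda)N\,Z_\ell(0,\sigma\ell/N)e^{2(N-\ell)F(\lambda)}$ for $\ell<N$, while $Z_N^{(N)}=Z_N(0,\sigma)$; and $Z_N(\lambda,\sigma)\ge Z_N(\lambda,0)\ge c(\lambda)e^{2NF(\lambda)}$ since $\sigma\ge0$. If $G(\sigma)<F(\lambda)$ then for $\ell\ge\gep N$ the relevant exponent $2\ell G(\sigma\ell/N)+2(N-\ell)F(\lambda)-2NF(\lambda)=2\ell\big(G(\sigma\ell/N)-F(\lambda)\big)$ is $\le-2\gep N\big(F(\lambda)-G(\sigma)\big)$, and summing over $\ell$ and dividing by $Z_N$ yields \eqref{pindom:excurdecay} with $\delta=\gep(F(\lambda)-G(\sigma))$. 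If $G(\sigma)=F(\lambda)$ the same exponent equals $2\ell\big(G(\sigma\ell/N)-G(\sigma)\big)$; for $\ell/N=\beta\in[\gep,1-\gep]$, using $G(\sigma\ell/N)\le\beta G(\sigma)$ it is $\le -2NF(\lambda)\beta(1-\beta)\le-2NF(\lambda)\gep(1-\gep)$, giving the first line of \eqref{equalcase}; for $\beta\in(1-\gep,1]$, write $\ell=N-j$ and use strict monotonicity of $G$ on $(0,\infty)$ to get $G(\sigma(1-j/N))-G(\sigma)\le-c_\gep\,j/N$ with $c_\gep=\sigma G'(\sigma(1-\gep))>0$, so that $Z_N^{(N-j)}\le C(\lambda,\sigma)N^{-1/2}(j+1)e^{2NF(\lambda)}e^{-c'j}$; summing this geometric series over $j\ge1$ and adding $Z_N^{(N)}=Z_N(0,\sigma)\le C\sigma^2 N^{-1/2}e^{2NF(\lambda)}$ gives the upper bound $C(\lambda)N^{-1/2}$ in \eqref{equalcase}. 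Finally the matching lower bound is immediate: $\mu_N\big(L_{\max}=N\big)=Z_N(0,\sigma)/Z_N(\lambda,\sigma)\ge\tfrac{\sigma^2}{C\,C(\lambda)\sqrt N}$ by Proposition~\ref{th:zerocase} and \eqref{zupper}, since $G(\sigma)=F(\lambda)$.
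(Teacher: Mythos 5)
Your reduction of the three $L_{\max}$ statements to \eqref{zupper} is sound and essentially parallels the paper's route (compare exponents via convexity of $G$, use Proposition \ref{th:zerocase} for the $N^{-1/2}$ prefactor carried by the long excursion, and get the lower bound in \eqref{equalcase} from $Z_N(0,\sigma)/Z_N(\lambda,\sigma)$). The problem is that your proof of \eqref{zupper} itself is not complete: you explicitly leave open the range $L_0<\ell\le N/2$ of your induction, and this is not a routine detail --- it is exactly where the statement lives. As you set it up, summing $Z_N^{(\ell)}$ over intermediate $\ell$ costs a factor $N$ from the position sum of the distinguished excursion against an exponential gain of only $e^{-c\ell}$, so the induction with an $N$-independent constant $C(\lambda)$ does not close; and the further decomposition you sketch (by the number of excursions of half-length exceeding $L_0$) produces a sum of order $\sum_j (Ne^{-cL_0})^j$, which forces $L_0\gtrsim\log N$ and hence a polynomial, not constant, correction coming back from your small-$\ell$ range via the factor $e^{2\sigma L_0}$.

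The paper closes this gap by a different mechanism. It writes $\lambda e^{-2NF(\lambda)}Z_N(\lambda,\sigma)$ as a sum over excursion decompositions of $\prod_i\tilde K(n_i)$ with $\tilde K(n)=\lambda e^{-2nF(\lambda)}Z_n(0,n\sigma/N)$, and proves (Lemma \ref{th:renewestimate}) the sharp mass estimate $\sum_{n\le(1-\gep)N}\tilde K(n)\le 1+C(\lambda)/N$: the area tilt inflates the critical renewal mass $\sum_n\lambda K(n)e^{-2nF(\lambda)}=1$ by only $O(1/N)$, which is obtained by splitting at $n\sim\sqrt{N/\sigma}$ and using $e^{\sigma n^2/N}-1\le 2\sigma n^2/N$ against the summable weights $K(n)e^{-2nF(\lambda)}$. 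Normalizing $\tilde K$ into an interarrival law then bounds the whole sum by $(1+C/N)^N\,\hat\bP(N\in\tau)\le e^{C}$, with at most one ``long'' excursion treated separately on the critical line. This $O(1/N)$ excess is the quantitative input your argument is missing; without it (or an equivalent), the uniform-in-$N$ constant in \eqref{zupper} is not reachable by the route you propose, and everything downstream that divides by $Z_N(\lambda,\sigma)$ and invokes \eqref{zupper} inherits the gap.
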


We provide a proof for Proposition \ref{th:upbpinningstr} from the viewpoint of renewal process. For simplicity of notations, for each $n\in \lint 1, N\rint$, set
\begin{equation}\label{defKn}
\begin{split}
K(n)&\colonequals \bP\left( S_{2n}=0;  \forall k \in \lint 1, 2n-1 \rint, S_k>0 \right),\\
 \widetilde{K}(n) &\colonequals \lambda e^{-2nF(\lambda)} Z_n \left(0, \frac{n\sigma}{N}\right).
\end{split}
\end{equation}
Note that with this definition,  we have from \eqref{excupath}
\begin{equation}\label{renewexpress}
 \gl e^{-2N F(\gl)}Z_N(\gl,\sigma)= \sum_{k=1}^N \sumtwo{(n_1,\dots,n_k)}{ \sum_{i=1}^k n_i=N} \prod_{i=1}^{k} \tilde K(n_i).
\end{equation}
The key point here is that with our assumption, $\tilde K(n)$ almost sums to $1$ and thus can be interpreted as the interarrival law of a renewal process.

\begin{lemma} \label{th:renewestimate}
When $G(\sigma)< F(\lambda)$, there exists a constant $C(\lambda, \sigma)>0$ such that for all $N\geq 1$, 
\begin{equation}
 \sum_{n=1}^{N} \widetilde{K}(n)\leq 1+\frac{C(\lambda, \sigma)}{N}.
\end{equation}
When $0<G(\sigma)= F(\lambda)$, for every given $\gep \in (0, \frac{1}{2})$  there exists a constant $C(\gl)$ such that for all $N \ge N_0(\gep)$ sufficiently large,
\begin{equation}
\sum_{n=1}^{ (1-\gep)N} \widetilde{K}(n)\leq 1+\frac{C(\gl)}{N}  .
\end{equation}
\end{lemma}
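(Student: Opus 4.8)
The quantity to control is $\widetilde K(n)=\lambda e^{-2nF(\lambda)}Z_n(0,n\sigma/N)$. The plan is to split the sum $\sum_n \widetilde K(n)$ into a bulk part (indices $n$ of order comparable to $N$, where $n\sigma/N$ is macroscopic) and a small-$n$ part, and to treat them by two different mechanisms. For the bulk part I would invoke Proposition \ref{th:zerocase}: with $K\colonequals\sigma$ (or any fixed bound on $n\sigma/N$ for $n\le N$), we have the two-sided bound $Z_n(0,n\sigma/N)\le C n^{-1/2}((n/N)\sigma \vee n^{-1/2})^2 \exp(2nG(n\sigma/N))$, so that $\widetilde K(n)\le C(\lambda,\sigma)\, n^{-1/2}\exp\big(2n(G(n\sigma/N)-F(\lambda))\big)$ (after absorbing the $((n/N)\sigma\vee n^{-1/2})^2$ factor, which is bounded by $1+\sigma^2\le C$). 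Under the assumption $G(\sigma)<F(\lambda)$, since $G$ is increasing (as $G'=\int_0^1(1-2x)\tanh'(\cdot)\cdots$, or more simply $G$ is nondecreasing because of the structure of \eqref{functiong}), we get $G(n\sigma/N)\le G(\sigma)$ for all $n\le N$, hence $\widetilde K(n)\le C(\lambda,\sigma) n^{-1/2} e^{-2n(F(\lambda)-G(\sigma))}$, which is summable with geometric decay; the whole sum over all $n\ge 1$ is then $O(1)$. That shows $\sum_n\widetilde K(n)$ is bounded, but not that it is $\le 1+C/N$ — for the sharp bound I need a different argument.

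For the sharp $1+C(\lambda,\sigma)/N$ estimate I would use the defining identity of $F(\lambda)$ as a free energy for the pure pinning renewal at criticality. Recall from \eqref{asymppinning} that $Z_n(\lambda,0)=(1+o(1))C_\lambda e^{2nF(\lambda)}$ for $\lambda>2$, and more precisely that $K_F(n)\colonequals \lambda e^{-2nF(\lambda)}K(n)$ (with $K(n)$ from \eqref{defKn}) is \emph{exactly} a probability distribution summing to $1$: this is the classical statement that $F(\lambda)$ is chosen so that $\sum_n \lambda e^{-2nF(\lambda)} K(n)=1$ for $\lambda>2$ (it follows from $K(n)\sim c\, n^{-3/2} 4^{-n}$ and the explicit formula \eqref{functionf}). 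Now compare $\widetilde K(n)$ with $K_F(n)$: by \eqref{recenterarea}, $Z_n(0,n\sigma/N)=\bE[e^{\frac{\sigma}{N}A_n(S)-(n\sigma/N)S_{2n}}\ind_{\{S_{2n}=0;\,S_k>0\}}]$ while $K(n)=\bP(S_{2n}=0;\,S_k>0)$; for $n\le N$ one has $h_k^{n}=\frac{n\sigma}{N\, n}(n-k+\tfrac12)=\frac{\sigma}{N}(n-k+\tfrac12)\in[0,\sigma]$, and the Radon–Nikodym factor $e^{\frac{\sigma}{N}A_n(S)-(n\sigma/N)S_{2n}}=\prod_k e^{h_k^n X_k}$ satisfies $\bE[\prod_k e^{h_k^n X_k}]=\exp(\sum_k L(h_k^n))\le \exp(2nG(n\sigma/N)+\sigma^2/N)$ by \eqref{diffapprox}. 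Hence $\widetilde K(n)=K_F(n)\cdot\frac{Z_n(0,n\sigma/N)}{K(n)}$ and I need $\frac{Z_n(0,n\sigma/N)}{K(n)}\le 1+C/N$ uniformly — but this ratio is $\ge 1$ in general, so instead I would argue: $\widetilde K(n)\le \lambda e^{-2nF(\lambda)}\exp(2nG(n\sigma/N))\, \nu_{n,N}(S_{2n}=0;S_k>0)$ and then, using $G(n\sigma/N)\le G(\sigma)=F(\lambda)$ in the critical case (resp.\ $<F(\lambda)$ strictly), reduce to: $\sum_{n\le(1-\epsilon)N}\widetilde K(n)\le \sum_n K_F(n)\exp(2n[G(n\sigma/N)-G(\sigma)])$ (roughly), and the exponent $G(n\sigma/N)-G(\sigma)\le 0$ is nonpositive — so the sum is $\le \sum_n K_F(n)=1$, up to the $e^{\sigma^2/N}=1+O(1/N)$ correction from \eqref{diffapprox} and the discrepancy between summing $K_F$ up to $(1-\epsilon)N$ versus to $\infty$ (the tail $\sum_{n>(1-\epsilon)N}K_F(n)$ is exponentially small since $K_F$ has an exponential tail when $\lambda>2$). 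Collecting: $\sum_{n\le(1-\epsilon)N}\widetilde K(n)\le e^{\sigma^2/N}(1+o(1))\le 1+C(\lambda)/N$, and in the strict case $G(\sigma)<F(\lambda)$ the geometric factor even lets one sum all $n\le N$.

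The key structural inputs are therefore: (i) the two-sided partition-function estimate of Proposition \ref{th:zerocase}, uniform in $\sigma$ over a compact set, applied with the \emph{effective} tilt $n\sigma/N$ which stays in $[0,\sigma]$; (ii) the monotonicity of $G$, giving $G(n\sigma/N)\le G(\sigma)$ for $n\le N$; (iii) the criticality identity $\sum_n \lambda e^{-2nF(\lambda)}K(n)=1$ for the pure pinning renewal when $\lambda>2$; and (iv) the sharp control $e^{\sigma^2/N}$ on the area-energy Riemann-sum error from \eqref{diffapprox}, which is exactly what upgrades an $O(1)$ bound to a $1+C/N$ bound. The main obstacle I anticipate is item (iv) combined with the polynomial prefactors: one must be careful that the $((n/N)\sigma\vee n^{-1/2})^2$ factor from Proposition \ref{th:zerocase} does not spoil the $1+C/N$ bound. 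In the critical case $G(\sigma)=F(\lambda)$ this factor is $\le 1$ only for $n$ small relative to $N/\sigma$; for $n$ comparable to $N$ it can be as large as $\sigma^2$, so the restriction to $n\le(1-\epsilon)N$ is essential — there the exponent $2n(G(n\sigma/N)-G(\sigma))$ is bounded away from $0$ by a quantity of order $(1-n/N)$, i.e.\ at least $c\epsilon$, no wait, one needs the more delicate observation that $G(\sigma)-G(n\sigma/N)\ge c(1-n/N)^2\sigma^2$ or so near the endpoint is not enough; rather, since we only sum up to $(1-\epsilon)N$, we may simply bound $n^{-1/2}((n/N)\sigma\vee n^{-1/2})^2\le C$ crudely and absorb the loss into the already-exponentially-decaying tail — this is where the real bookkeeping lies, and it is the step I would write out most carefully.
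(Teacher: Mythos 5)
Your skeleton is the right one and matches the paper's: the criticality identity $\sum_{n\ge 1}\lambda e^{-2nF(\lambda)}K(n)=1$, a split between small $n$ and large $n$, exponential decay of $\widetilde K(n)$ for large $n$ via $G(n\sigma/N)\le G(\sigma)<F(\lambda)$ (resp.\ $\le G((1-\gep)\sigma)<F(\lambda)$ on $n\le(1-\gep)N$ in the critical case), and the recognition that the two-sided bound of Proposition \ref{th:zerocase} only yields an $O(1)$ bound so that something sharper is needed near $n=1$. However, the step you flag as the crux — upgrading to $1+C/N$ — is not correctly executed. Your proposed comparison routes $\widetilde K(n)$ through $e^{2nG(n\sigma/N)}\nu_{n,N}(S_{2n}=0;\,S_k>0)$ and then replaces $\nu_{n,N}(S_{2n}=0;\,S_k>0)$ by $K(n)$. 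By Proposition \ref{th:lemnun} these two quantities are comparable only up to an unspecified multiplicative constant (and for $n$ of order $N$ the ratio is in fact of order $N\sigma^2$, since $\nu_{n,N}(\cdots)\asymp n^{-1/2}(n\sigma/N)^2$ while $K(n)\asymp n^{-3/2}$). Consequently the displayed ``reduce to $\sum_{n\le(1-\gep)N}\widetilde K(n)\le\sum_n K_F(n)\exp(2n[G(n\sigma/N)-G(\sigma)])$'' is not a valid inequality, and following it through gives only $\sum_n\widetilde K(n)\le C(\lambda,\sigma)$, not $\le 1+C/N$. The error term from \eqref{diffapprox} controls $\bE[e^{\cdots}]$ against $e^{2nG}$, but it does not control the probability factor, which is where the lost constant lives.

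The missing ingredient is elementary and bypasses $G$ and $\nu_{n,N}$ entirely for small $n$: on the event $\{S_{2n}=0;\,S_k>0\}$ one has the deterministic bound $0\le A_n(S)\le n^2$, hence
\begin{equation*}
K(n)\;\le\; Z_n\!\left(0,\tfrac{n\sigma}{N}\right)\;\le\; e^{\sigma n^2/N}\,K(n),
\qquad\text{so}\qquad
\widetilde K(n)-K_F(n)\;\le\; K_F(n)\left(e^{\sigma n^2/N}-1\right).
\end{equation*}
For $n\le\sqrt{N/\sigma}$ the exponent is at most $1$, so $e^{\sigma n^2/N}-1\le 2\sigma n^2/N$, and since $e^{-2nF(\lambda)}n^2$ is summable (as $F(\lambda)>0$ for $\lambda>2$) this contributes at most $C(\lambda)\sigma/N$ beyond $\sum_n K_F(n)=1$. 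The remaining range $n>\sqrt{N/\sigma}$ is then handled exactly as you propose, by the uniform exponential bound $\widetilde K(n)\le C(\lambda,\sigma)e^{2n(G(\sigma)-F(\lambda))}$ (or its $(1-\gep)$ variant), whose sum is $O(e^{-c\sqrt{N}})=O(1/N)$ and swallows all polynomial prefactors. With that replacement your argument closes; without it, the $1+C/N$ conclusion does not follow from what you have written.
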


\begin{proof}[Proof of Proposition \ref{th:upbpinningstr} from Lemma \ref{th:renewestimate}]
By monotonicity in $\gs$,
it is sufficient to treat the case $G(\sigma)= F(\lambda)$.
For pedagogical reason however we start with the easier case $G(\sigma)< F(\lambda)$ (and a slightly weak-statement see below).
We set 
\begin{equation}\label{defhatK}
 \hat K(n):= \tilde K(n)/ \sum_{m=1}^N \tilde K(m)
\end{equation}
and let $\hat \bP$ denote the law of a renewal process $\tau$ starting from zero with interarrival law $\hat K$. That is a increasing sequence $(\tau_k)_{k\ge 0}$ with IID increments whose distribution is given by $\hat K(n)$. We also consider $\tau$ as a subset of $\bbN$ and write
$\{N\in \tau\}$ for $\{\exists k\ge 0, \tau_k=N\}$.
We have from \eqref{renewexpress}
\begin{multline}\label{renewexpress1}
  \gl e^{-2N F(\gl)} Z_N(\gl,\sigma)= \sum_{k=1}^N \left(\sum_{m=1}^N \tilde K(m)\right)^k \sumtwo{(n_1,\dots,n_k)}{ \sum_{i=1}^k n_i=N} \prod_{i=1}^{k} \hat K(n_i)\\ \le \left(1 \vee \sum_{m=1}^N \tilde K(m)\right)^N \hat \bP\left( N  \in \tau\right)\le
  e^{C(\lambda, \sigma)}
\end{multline}
where the last inequality uses Lemma \ref{th:renewestimate} (and the fact that a probability is always smaller than one).
Note that this does not provide a full proof of \eqref{zupper} since the constant in the upper bound \textit{does depend} on $\gs$.

\medskip

Let us now treat the case $G(\sigma)= F(\lambda)$.  For a given $\gep \in (0, \frac{1}{2})$, we redefine
\begin{equation}\label{redefined}
 \hat K(n):= \tilde K(n)\ind_{\{ n \le (1-\gep )N\}}/ \left(\sum_{1 \le m \le (1-\gep )N} \tilde K(m)\right).
\end{equation}
and update the definition of $\hat \bP$ accordingly.
Now we can make a computation similar to \eqref{renewexpress1} but including possibly one long jump. 
We obtain (we have put in  the factor  term $e^{C(\gl)}$ which accounts for the fact that the $\tilde  K$ do not sum to one.)
\begin{multline}\label{renewexpress2}
  \gl e^{-2N F(\gl)} Z_N(\gl,\sigma)\le  e^{ C( \gl)} \bigg[ \hat \bP\left( N  \in \tau \right)+ \!\!\!\! \sumtwo{a,b\in \lint 0,N\rint}{ b-a > (1-\gep)N}  \!\!\!\!  \hat \bP\left( a  \in \tau\right)\tilde K(b-a)
  \hat \bP\left( N-b  \in \tau\right)\bigg]\\
  \le  e^{C( \gl)}\left(1+\sumtwo{a,b\in \lint 0,N\rint}{ b-a > (1-\gep)N}  \!\!\!\!  \tilde K(b-a)\right)\le e^{ C(\gl)}\left(1+ \frac{C'(\gl)}{\sqrt{N}}\right) .  
\end{multline}
To obtain the last inequality, note that as  $G(\sigma)=F(\gl)$, by Proposition \ref{th:zerocase} we have
 $$\tilde K(n)
 \le \frac{\gl  C}{\sqrt{N}}e^{ 2n\left(G(n\gs/N)-G(\gs)\right)}\le  \frac{\gl C}{\sqrt{N}}e^{- \frac{2n(N-n)}{N}G(\sigma)},$$ 
where the last inequality follows by convexity of $G$. Summed over $a$ and $b$ this yields the adequate $C'(\gl)/\sqrt{N}$ term (since we are on the critical line, $\sigma$ is a function of $\gl$).

\medskip 

 Let us now turn to the proof the statements concerning the length of the largest excursion $L_{\max}$. 
When $F(\gl)>G(\gs)$, repeating \eqref{renewexpress} but summing over $\xi$ displaying a large jump we have
\begin{equation}\label{pinexcurlarge}
  \mu_N ( L_{\max}(\xi)\ge  \gep N)\le \frac{ C(\gl,\sigma)  \hat \bP (L_{\max}(\tau)\ge \gep N ; \ N \in \tau)}{ e^{-2N F(\gl)}Z_N(\gl,\sigma)},
\end{equation}
where $L_{\max}(\tau):=\max\{ |\tau_{k}-\tau_{k-1}| \ : \tau_k \le N \}$ is the largest inter-arrival before $N$ in the renewal sequence.
The denominator in the r.h.s. in \eqref{pinexcurlarge} is larger than 
$e^{-2N F(\gl)}Z_N(\gl,0)$ which according to \eqref{asymppinning} is of constant order.
It remains to show that the denominator is exponentially small.
We have
\begin{equation}\label{theabove}
 \hat \bP (L_{\max}(\tau)\ge \gep N ; \ N \in \tau)\le  N \hat \bP(\tau_1\ge \gep N) \le \frac{N}{\tilde K(1)}  \sum_{n = \gep N}^N \tilde K(n) .
\end{equation}
Now from \eqref{partfunzeropinning}-\eqref{diffapprox} and the definition of $\tilde K$, we have
\begin{equation}\label{unifupbd:tildeKn}
 \tilde K(n) \le  \gl e^{2n \left( G(\frac{\sigma n }{N})-F(\gl)\right)+ \frac{\sigma^2 n}{4 N^2}}\le C(\gl,\sigma)  e^{2n \left( G(\sigma)-F(\gl)\right)}
 \end{equation}
and hence it decays exponentially, and so does the sum in \eqref{theabove}.
 When $F(\gl)=G(\gs)$, we proceed similarly and we only have to show that (for the renewal defined in \eqref{redefined})
 \begin{equation}
  \hat \bP (L_{\max}(\tau)\in [ \gep N, (1-\gep)N] \ ; \ N \in \tau)\le e^{-\delta N}.
 \end{equation} 
 We use \eqref{unifupbd:tildeKn} and $G(\frac{n}{N}\gs)\le G((1-\gep)\gs)$ for all $n \le (1-\gep)N$ to obtain
 \begin{equation}\label{unionbdtie}
\hat \bP ( \tau_1  \in [ \gep N, (1-\gep)N] )\le \frac{1}{\tilde K(1)}  \sum_{\gep N \le n \le (1-\gep)N} \tilde K(n) \le C(\gl, \gs) e^{- 2\gep N (F(\gl)-G((1-\gep)\gs))}.
\end{equation} 
Finally to estimate (from above and below) the probability of having long jumps when $F(\gl)=G(\gs)$ (in that case the value of $\sigma$ is determined by that of $\gl$) we first observe that from Proposition   \ref{th:zerocase} and \eqref{zupper} we have
$$\mu_N (  L_{\max}(\xi)= N)  = \frac{Z_N(0, \gs)}{Z_N(\gl,\gs)}\ge \frac{1}{C(\gl) \sqrt{N}}.$$
For the upper-bound, we observe that in \eqref{renewexpress2}, the contribution of jumps larger than $(1-\gep)N$ is given by the sum over $a$ and $b$ and this readily implies that for all $N \ge N_0(\gep)$
\begin{equation}\label{tieupprob:largexcur}
\mu_N (L_{\max}(\xi) > (1-\gep)N) \leq  \frac{C(\gl)}{\sqrt{N}}.
 \end{equation}

\end{proof}

\begin{proof}[Proof of Lemma \ref{th:renewestimate}] Recall the notations $K(n)$ and $\widetilde{K}(n)$  in (\ref{defKn}).
By \cite[Equation (1.6)]{GiacominPolymerbk}  we know that
\begin{equation}\label{sumstoone}
\sum_{n=1}^{\infty} \lambda K(n) e^{-2n F(\lambda)}=1.
\end{equation}
Moreover, there exists a universal constant $C_0>0$ such that for all $n\geq 1$,
\begin{equation}
C_0^{-1} n^{-3/2}\leq K(n)\leq C_0n^{-3/2}. \label{renewal induced by SRW}
\end{equation}
We are going to use different estimates for $\tilde K(n)$ depending on whether $n$ is small or large. 
We adopt the same notation as in the proof of Proposition \ref{th:zerocase}, $S$ being a simple random walk and $A_n$ being the area between its graph and the $x$ axis (see Equation \eqref{recenterarea} and above). For small values of $n$, we observe that since $A_n(S)\le n^2$ when $S_{2n}=0$   we have
\begin{equation}\label{upperkn}
 \tilde K(n)= \gl e^{-2n F(\gl)} \bE\left[ e^{\frac{\sigma A_n(S)}{N}}\ind_{\{ S_1>0, \cdots, S_{2n-1}>0, S_{2n}=0\}}\right]\le \gl  e^{-2n F(\gl)}e^{\frac{\sigma n^2}{N}} K(n).
\end{equation}
Using \eqref{sumstoone}, and the  bounds $K(n)\le 1$ and $e^u-1\le 2u$ for $u\le 1$ we obtain for large values of $N$
\begin{multline}\label{zecut}
\sum_{n=1}^{ \sqrt{N/\sigma}}  \widetilde{K}(n)-1
\le \sum_{n=1}^{\sqrt{N/\sigma}} \left( \widetilde{K}(n)-\gl e^{-2n F(\gl)}K(n)\right)
\\ \le \sum_{n=1}^{\sqrt{N/\sigma}}  \gl K(n) e^{-2n F(\gl)} \left(e^{\frac{\sigma n^2}{N}}-1\right)
\le \gl  \sum_{n=1}^{\sqrt{N/\sigma}} e^{-2n F(\gl)} \frac{2\sigma n^2}{N} \le \frac{\sigma C(\gl)}{N}.
\end{multline}
 For large values of $n$ we rely on  \eqref{unifupbd:tildeKn}.  
When  $G(\sigma)<F(\gl)$, we bound $G(\frac{\sigma n }{N})$ by $G(\sigma)$.
Using this we obtain
\begin{equation}\label{pindomupbound:bigjumps}
\sum_{n= \sqrt{N/\sigma}+1}^N \widetilde{K}(n) \le \  \sum_{n\geq  \sqrt{N/\sigma}+1}
C \gl e^{2n \left( G(\sigma)-F(\gl)\right)}\le C'(\gl,\sigma) e^{-2  \sqrt{N/\sigma} \left(F(\gl)- G(\sigma)\right) }\le \frac{C'}{N}.
\end{equation}
When $G(\sigma)=F(\gl)$,  we bound  $G(\frac{n}{N} \gs)$ by $G((1-\gep)\gs)$ for $n \le (1-\gep)N$
which is sufficient to conclude.
 \end{proof}
%

\subsection{The case $G(\sigma)>F(\gl)$}\label{sec:gbiggerf}

Our objective in this section is to prove: 
\begin{proposition}\label{th:upasymptareadom}
If $G(\sigma)>F(\gl)$, 
then there exists a constant $C(\gl, \gs)$ such that for every $N$ we have
\begin{equation}\label{spoof}
 Z_N(\gl,\sigma)\le \frac{C(\gl,\gs)}{\sqrt{N}} e^{2N G(\sigma)}.
\end{equation}
On top of this, for a given $\gep>0$, there exists $\delta>0$ such that for all $N$ sufficiently large  we have 
\begin{equation}\label{aredom:excurdecay}
 \mu^{\gl,\sigma}_{N}( L_{\max}(\xi)\le (1-\gep) N )\le e^{-\delta N}.
\end{equation}

\end{proposition}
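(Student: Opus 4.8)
The plan is to run an argument parallel to the one used in the case $F(\lambda)\ge G(\sigma)$, but now renormalizing by $e^{2NG(\sigma)}$ instead of $e^{2NF(\lambda)}$, so that the single long excursion carries essentially all the weight while the pinned portions contribute a summable correction. Concretely, starting from the excursion decomposition \eqref{excupath}, I would isolate, in each term, the largest block $n_i$, call it $m$, and bound the product of the remaining factors. For the small blocks $n_j$ with $n_j\le \gep' N$ one has $Z_{n_j}(0,\sigma n_j/N)\le C n_j^{-3/2}e^{2n_j G(\sigma n_j/N)}$ by Proposition \ref{th:zerocase}; since $G$ is convex and $G(0)=0$, $2n_j G(\sigma n_j/N)\le \tfrac{2 n_j m}{N}G(\sigma)$ is \emph{not} quite what is wanted — rather one uses $2n_jG(\sigma n_j/N)+2mG(\sigma m/N)\le 2(n_j+m)G(\sigma(n_j+m)/N)$, i.e.\ merging a small excursion into the big one only increases the area energy. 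Iterating this ``absorption'' bound shows that replacing all of $n_1,\dots,n_k$ by a single excursion of length $N$ dominates up to a multiplicative factor $\prod_j (\lambda \cdot C n_j^{-3/2}) \cdot (\text{polynomial in }N)$, and the key point is that $\sum_{n\ge 1}\lambda C n^{-3/2}<\infty$ controls the number and sizes of the small excursions. The leftover combinatorial sum over $(k;n_1,\dots,n_k)$ is then a convergent renewal-type sum, giving $Z_N(\lambda,\sigma)\le \frac{C(\lambda,\sigma)}{\sqrt N}e^{2NG(\sigma)}$ once we feed in $Z_N(0,\sigma)\le \frac{C}{\sqrt N}e^{2NG(\sigma)}$ from \eqref{sigmazero} for the single dominant block. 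To make the argument clean, I would formalize it as: pick $\beta^*\in(0,1)$ the maximizer of $\beta\mapsto \beta G(\beta\sigma)+(1-\beta)F(\lambda)$ — here since $G(\sigma)>F(\lambda)$ one checks $\beta^*=1$ — and show directly that the contribution of configurations with $L_{\max}\le (1-\gep)N$ is exponentially smaller than $e^{2NG(\sigma)}$.

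For the upper bound \eqref{spoof} itself, the cleanest route is: write $Z_N(\lambda,\sigma)=\sum_{\ell=1}^N (\text{contribution of }\{L_{\max}=\ell\})$. A configuration with a marked maximal excursion of length $2\ell$ occupying $[x,x+2\ell]$ factorizes as (pinned random-walk-pinning partition function on the left of total length $a$) $\times$ ($Z_\ell(0,\sigma\ell/N)$) $\times$ (pinned partition function on the right of length $N-a-\ell$), summed over positions; the pinned pieces are bounded, using \eqref{asymppinning}, by $C\,e^{2(N-\ell)F(\lambda)}\cdot(\text{polynomial})$ after summing over internal excursion structure, and $Z_\ell(0,\sigma\ell/N)\le C\ell^{-3/2}e^{2\ell G(\sigma\ell/N)}$. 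Hence each term is at most $C(\lambda)\,\mathrm{poly}(N)\,\ell^{-3/2}\exp\!\big(2\ell G(\sigma\ell/N)+2(N-\ell)F(\lambda)\big)$. Since $G(\sigma)>F(\lambda)$, the exponent $2\ell G(\sigma\ell/N)+2(N-\ell)F(\lambda)=2N\big[\tfrac{\ell}{N}G(\tfrac{\ell}{N}\sigma)+(1-\tfrac{\ell}{N})F(\lambda)\big]$ is, by the characterization \eqref{def:actienergy}--\eqref{defbetastar} and the assumption, maximized at $\ell=N$ where it equals $2NG(\sigma)$, and it is \emph{strictly} smaller for $\ell\le(1-\gep)N$, uniformly: there is $c(\gep)>0$ with the bracket $\le G(\sigma)-c(\gep)$ on $\{\ell\le(1-\gep)N\}$. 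Summing over $\ell\le(1-\gep)N$ gives $O(e^{2N(G(\sigma)-c(\gep))})$, which is $o(e^{2NG(\sigma)}/\sqrt N)$; summing over $\ell>(1-\gep)N$ and optimizing $\gep$, or rather treating $\ell$ close to $N$ directly via $Z_\ell(0,\sigma\ell/N)\le \tfrac{C}{\sqrt\ell}e^{2\ell G(\sigma\ell/N)}$ and $G(\sigma\ell/N)\le G(\sigma)$ together with $\ell\ge N/2$, yields the clean $\frac{C(\lambda,\sigma)}{\sqrt N}e^{2NG(\sigma)}$. This also needs the matching lower bound $Z_N(\lambda,\sigma)\ge Z_N(0,\sigma)\ge \frac{1}{C\sqrt N}e^{2NG(\sigma)}$ (drop all pinning, use \eqref{sigmazero}) to turn the estimate on $\{L_{\max}\le(1-\gep)N\}$ into a statement about $\mu_N$.

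For \eqref{aredom:excurdecay}, divide the bound on $Z_N(\lambda,\sigma)\ind_{\{L_{\max}\le (1-\gep)N\}}$ just obtained, namely $\le C\,\mathrm{poly}(N)\,e^{2N(G(\sigma)-c(\gep))}$, by the lower bound $Z_N(\lambda,\sigma)\ge \frac{1}{C\sqrt N}e^{2NG(\sigma)}$; the ratio is $\le e^{-\delta N}$ for any $\delta<2c(\gep)$ and $N$ large, which is exactly \eqref{aredom:excurdecay}. The one point requiring genuine care — and the main obstacle — is the uniform strict inequality ``$\tfrac\ell N G(\tfrac\ell N\sigma)+(1-\tfrac\ell N)F(\lambda)\le G(\sigma)-c(\gep)$ on $\ell\le(1-\gep)N$'': this is where the hypothesis $G(\sigma)>F(\lambda)$ enters decisively, via the observation that the function $h(\beta):=\beta G(\beta\sigma)+(1-\beta)F(\lambda)$ has $h(1)=G(\sigma)$, $h'(1)=G(\sigma)+\sigma G'(\sigma)-F(\lambda)$, and that $h$ is convex in $\beta$ (as $\beta G(\beta\sigma)$ is convex and the linear term is harmless), so $h(\beta)\le h(1)$ with equality only at $\beta=1$ unless $h$ is affine — and the strict convexity of $\beta\mapsto\beta G(\beta\sigma)$ (inherited from strict convexity of $G$ away from $0$, with $G''>0$ on $(0,\infty)$ since $L''>0$) rules that out. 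Making the gap $c(\gep)$ explicit and checking the convexity/strict-convexity claims carefully is the technical heart; the rest is bookkeeping with the renewal estimates \eqref{asymppinning} and Proposition \ref{th:zerocase}.
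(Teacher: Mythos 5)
Your plan founders on the key estimate. In the second paragraph you decompose according to the length $2\ell$ and position of \emph{the} maximal excursion and bound the two side pieces as ``pinned random-walk-pinning partition functions'' via \eqref{asymppinning}, arriving at the exponent $2N\bigl[\tfrac{\ell}{N}G(\tfrac{\ell}{N}\gs)+(1-\tfrac{\ell}{N})F(\gl)\bigr]$ for the event $\{L_{\max}=\ell\}$. This is false for two related reasons. First, the side pieces are not $\gs=0$ pinning partition functions: they carry the area tilt $\tfrac{\gs}{N}A$, so \eqref{asymppinning} does not apply to them. Second, and more fundamentally, on $\{L_{\max}=\ell\}$ there may be \emph{several} excursions of length comparable to $\ell$, each contributing area energy of order $N$. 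For instance, with $\gl$ close to $2$ (so $F(\gl)\approx 0$) a configuration made of two excursions of length $N/2$ has weight of order $e^{2N G(\gs/2)}$, whereas your claimed bound for $\ell=N/2$ is $\mathrm{poly}(N)\,e^{2N\cdot\frac12 G(\gs/2)}$ --- off by a factor $e^{NG(\gs/2)}$. The correct exponential rate on $\{L_{\max}\le(1-\gep)N\}$ is governed by $\sup\{\sum_i\beta_iG(\beta_i\gs)+(1-\sum_i\beta_i)F(\gl)\}$ over $\beta_i\le 1-\gep$, $\sum_i\beta_i\le 1$, which can strictly exceed $\sup_{\beta\le 1-\gep}h(\beta)$ for your $h$. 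The conclusion $\le e^{2N(G(\gs)-c(\gep))}$ is still true, but not by your argument, and the constant $c(\gep)$ you would extract is wrong.

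This multi-excursion issue is precisely what the paper's proof is organized around: it fixes $\gep_0$ with $F(\gl)\le G((1-\gep_0)\gs)$, tracks \emph{all} excursions longer than $\gep_0 N$ (there are at most $\gep_0^{-1}$ of them), controls the short excursions by the renewal bound $\sum_{m\le\gep_0 N}\tilde K(m)\le 1+C/N$, and bounds the $k\ge 2$ terms by $C^kN^{2k+1}e^{2NG((1-\gep_0)\gs)}$ using only $G(l_j\gs/N)\le G((1-\gep_0)\gs)$ and $F(\gl)\le G((1-\gep_0)\gs)$; the $1/\sqrt N$ prefactor then comes from the $k=1$ term alone, where the geometric decay $e^{-2(m_0+m_1)[G(\gs)-F(\gl)]}$ in the pinned lengths makes the positional sum convergent. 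Your first-paragraph ``absorption'' idea (superadditivity of $u\mapsto uG(\gs u)$) is a true inequality but you abandon it, and by itself it does not produce the prefactor; your convexity argument showing $h(\beta)\le G(\gs)-c(\gep)$ on $\beta\le 1-\gep$ is correct but is applied to the wrong quantity. The lower bound $Z_N(\gl,\gs)\ge Z_N(0,\gs)\ge \frac{1}{C\sqrt N}e^{2NG(\gs)}$ and the final division step for \eqref{aredom:excurdecay} are fine.
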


\begin{proof} 
 Observe that  if $0 \le \gl \le \gl'$, we have
\begin{equation}
\begin{gathered}
Z_N(\gl, \gs) \le Z_N(\gl', \gs),\\
\mu^{\gl,\sigma}_{N}( L_{\max}(\xi)\le (1-\gep) N ) \le  \mu^{\gl',\sigma}_{N}( L_{\max}(\xi)\le (1-\gep) N ),
\end{gathered}
\end{equation}
where the last inequality can be proved by FKG inequality (cf. \cite[Lemma 3.3]{lacoin2016mixing}).
Therefore, it is sufficient to prove the statements for $\gl>2$.
To prove \eqref{spoof}, we  fix $\gep_0 \colonequals \gep_0 (\gl, \gs)>0$ (not related to the $\gep$ in \eqref{aredom:excurdecay}) sufficiently small such that
\begin{equation}\label{choosegep}
F(\gl) \le G((1-\gep_0)\gs).
\end{equation}

From Lemma \ref{th:renewestimate}, we have
\begin{equation}
\sum_{m=1}^{\gep_0 N} \tilde K(m) \le 1+\frac{C(\gl, \gs)}{N}.
\end{equation}
In order to estimate the partition function we are going to split the trajectories according to where the position of jumps larger than $\gep_0 N$  away from the $x$-axis are located. Starting with \eqref{excupath}, letting ${\bf l}=(l_1, \dots, l_k)$ denote the length of those jumps and ${\bf m}=(m_0,\dots, m_k)$ the space between those jumps, we have (similarly to \eqref{renewexpress2})

\begin{equation}\label{renewexpress3}
 Z_{N}(\gl,\sigma)\le  \left(1\vee \sum_{m=1}^{\gep_0 N} \tilde K(m)\right)^N  
 \sum_{k=0}^{\infty}  \gl^{k-1} \!\!\!\!\!\!\sum_{ ({\bf l}, {\bf m})\in \cA^{(\gep_0)}_{N,k}} \prod_{i=0}^k e^{2m_i  F(\gl) } \hat \bP( m_i \in \tau)  \prod_{j=1}^k Z_{l_j}\left(0,\frac{\sigma l_j}{N}\right).
\end{equation}
where 
\begin{equation}\label{defank}
  \cA^{(\gep_0)}_{N,k}:= \left\{ [(l_j)_{j=1}^k, (m_i)_{i=0}^k]\in \bbZ^{2k+1}_+  :  \forall j\in \lint 1,k\rint, \ l_j\ge \gep_0 N \text{ and } \sum_{i=0}^k m_i+ \sum_{j=1}^k l_j =N  \right\}.
\end{equation}
Bounding above the probabilites by $1$, and using the fact only  $k\le \gep_0^{-1}$ are positive, we obtain that 
\begin{equation}
  Z_{N}(\gl,\sigma)\le e^{C(\gl,\sigma)}\sum_{k=0}^{\gep_0^{-1}} \gl^{k-1} \!\!\!\!\!\! \sum_{ ({\bf l}, {\bf m})\in \cA^{(\gep_0)}_{N,k}} \!\!\!\!\!\!   e^{\sum_{i=0}^k 2 m_i   F(\gl)}\prod_{j=1}^k Z_{l_j}\left(0,\frac{\sigma l_j}{N}\right)=: e^{C(\gl,\sigma)}
  \sum_{k=0}^{\gep_0^{-1}} \gl^{k-1}Z_{N,k}.
\end{equation}
We are going to show first that the contribution of $k=0$ and $k\ge 2$ in the above sum are small. 
We have $Z_{N,0}=e^{2N  F(\gl)}$.
For $k\ge 2$, we simply use the fact that  $\#\cA_{N,k}\le N^{2k+1}$ and \eqref{compar} to obtain that 
\begin{equation}\label{twoexcurs}
Z_{N,k}\le  C_{\gs}^k N^{2k+1} e^{ \sum_{i=0}^k 2 m_i   F(\gl)  + 
\sum_{j=1}^k 2 l_j  G\left(\frac{l_j \gs}{N}\right)}
\le   C_{\gs}^k N^{2k+1} e^{ 2N   G((1-\gep_0) \sigma)} ,
\end{equation}
where the second inequality uses only the fact that $l_j/N\le (1-\gep_0)$ and the assumption in \eqref{choosegep}.
Finally for the case $k=1$ we have 
\begin{multline}
 Z_{N,k}\le \sumtwo{m_0,m_1}{m_0+m_1< N(1-\gep_0)} e^{2(m_0+m_1)   F(\gl) }
 Z_{N-m_0-m_1}(0,\sigma)
\\ \le C  e^{2N G(\sigma)} \sumtwo{m_0,m_1}{m_0+m_1< N(1-\gep_0)}\frac{e^{-2(m_0+m_1)\left[ G(\sigma)- F(\gl) \right]}}{\sqrt{N-m_0-m_1}},
\end{multline}
and we conclude that the last sum is bounded above by $C N^{-1/2}$ since $F(\gl)<G(\gs)$.

Now we move to provide an upper bound on $\mu_N(L_{\max}(\xi) \le (1-\gep)N)$. We need to estimate 
 $Z_N(\gl,\sigma)\mu_N(L_{\max}(\xi) \le (1-\gep)N)$.
Using the decomposition above with $Z_{N,0}=e^{2N  F(\gl)}$ and \eqref{twoexcurs} to bound the contribution of $k \ge  2$, it remains to  to estimate the contribution corresponds to case  $k=1$ and $\gep N \le (m_0+m_1) \le (1-\gep_0)N$, 
\begin{multline}
  \sumtwo{m_0,m_1}{\gep N \le (m_0+m_1) \le (1-\gep_0)N} e^{2(m_0+m_1)   F(\gl)}
 Z_{N-m_0-m_1}\left(0, \sigma \frac{N-(m_0+m_1)}{N}\right)
\\ \le C_{\gs} N^2 \exp \left(2N   G((1- \gep \wedge  \gep_0)\gs) \right),
\end{multline}
where we use the assumption \eqref{choosegep} and bound $Z_n(0, \gs \frac{n}{N})$ by $Z_n(0, (1-\gep)\gs)$ for all $n \le (1-\gep)N$.
The above inequality together with  $Z_N(\gl, \gs) \ge Z_N(0, \gs)$ and the lower-bound in \eqref{sigmazero}  allows to conclude.

\end{proof}

\subsection{Proof of Proposition \ref{th:asymppf} and Theorem \ref{th:shapeandcontact1}}
Let us first check that the combination of the previous statements yield Proposition \ref{th:asymppf}.
Proposition \ref{th:upbpinningstr} and Proposition \ref{th:upasymptareadom} give the desired upper bound on the partition function. Concerning the lower bound, we have by monotonicity for every $\gl,\sigma\ge 0$
 \begin{equation}\label{lowerbound}
Z_N(\lambda, \sigma)\geq \max \left( Z_N(\lambda, 0), Z_N(0, \sigma) \right),
\end{equation}
and thus the lower bounds in \eqref{spf}-\eqref{scpf} are a direct consequence of Proposition \ref{th:zerocase} and \eqref{asymppinning}.

 Let us now turn to Theorem \ref{th:shapeandcontact1} which requires a bit more work.
 The statements in \eqref{pindomshape} and \eqref{pindomshapetie} are proved in Proposition \ref{th:upbpinningstr} and we are left with the proof of \eqref{areadomshape} and \eqref{pintieareashape}. We focus on \eqref{areadomshape}, the proof of \eqref{pintieareashape} follows along the same line, and we leave it to the reader.
 Since we have allready proven the statement in the case $\sigma=0$, our strategy is to reduce ourselves to this case, by conditioning on the size of the unpinned region appearing in bulk of the system (which we have proved to be of size $N(1-o(1))$ (cf. Proposition \ref{th:zerocase} and Proposition \ref{th:upasymptareadom}). Let us set
 \begin{equation}\label{leftrightcontacts}
\begin{gathered}
L(\xi) \colonequals \sup \left\{k\leq N: \xi_k=0  \right\},\\
R(\xi) \colonequals \inf \left\{k\geq N: \xi_k=0 \right\}.
\end{gathered}
\end{equation}
We fix $\gep'>0$ sufficiently small in a way that depends on $\gep$ and not on $N$ (we will mention the requirement along the proof).
 We have 
 \begin{multline}\label{areadomshape3}
\mu_N^{\gl,\gs} \left(\sup_{u \in [0, 2]} \left\vert \frac{1}{N}\xi_{\lceil uN \rceil}-M_{\gs}(u)\right\vert > \gep \right) \\ \le   \maxtwo{\ell,r\in \lint 0, 2N\rint }{ r-\ell \ge 2N(1-\gep')}\mu_N^{\gl,\gs} \left( \sup_{u \in [0, 2]} \left\vert \frac{1}{N}\xi_{\lceil uN \rceil}-M_{\gs}(u)\right\vert >   \gep  \ \Big| \ L(\xi)=  \ell, 
R(\xi)= r \right)\\
 +  \mu_N^{\gl,\gs} \left(  L_{\max}(\xi) \le (1-\gep')N  \right). 
\end{multline}
The second term is exponentially small by Proposition \ref{th:upasymptareadom}.
We concerning the first term in the r.h.s. of \eqref{areadomshape3},  we observe that for $\gep'$ sufficiently small we have with probability one
$$\forall s\notin [0,2\ell] \cup[2r,2N],\quad  \left\vert \frac{1}{N}\xi_{ s }-M_{\gs}(s/N)\right\vert \le   \gep,$$ 
simply because both functions are $1/N$-Lipschitz. Setting $\bar N=(r-\ell)/2$ and $\bar \sigma:=\frac{ r-\ell}{2N} \sigma $ we only have to look at the middle part of the path which after conditioning has distribution $\mu_{\bar N}^{0,\bar \gs}$. Hence we need to estimate 
\begin{equation}
 \mu_{\bar N}^{0,\bar \gs} \left( \sup_{u \in [0, 2]} \left\vert \frac{1}{\bar N}\xi_{\lceil u \bar N \rceil}- \frac{N}{\bar N}M_{\gs}\left( \frac{\ell}{2\bar N}+\frac{u \bar N}{N}  \right)\right\vert >\frac{\gep  N}{\bar N} \right).
\end{equation}
Choosing $\gep'$ small we can ensure that 
\begin{equation}
\sup_{u \in [0, 2]} \left|  \frac{N}{\bar N}M_{\gs}\left( \frac{\ell}{\bar N}+\frac{u \bar N}{N}  \right)- M_{\bar \gs}(u) \right| \le \gep/2
\end{equation}
and we obtain that the term in the $\max$ in the r.h.s of \eqref{areadomshape3} is smaller than
\begin{equation}
 \mu_{\bar N}^{0,\bar \gs} \left( \sup_{u \in [0, 2]} \left| \frac{1}{\bar N}\xi_{\lceil u \bar N \rceil}-  M_{\bar \gs}(u) \right| > \gep/2 \right)
\end{equation}
which is exponentially small from Proposition \ref{th:zerocase} (recall that $\bar N\ge N/2$).

\qed

\section{Bottleneck identification and lower bound on the relaxation time}\label{sec:lowbrel}

\subsection{Heuristics}\label{heuristicz}

In order to understand Theorem \ref{th:relax}, let us explain heuristically what makes the systems mixing slowly when $E(\gl,\sigma)>0$. 
For this we have to describe the most likely pattern that the system uses to relax to equilibrium.

\medskip 

In the case where $F(\gl)\ge G(\sigma)$ which might be the more illustrative.
Since at equilibrium the interface is pinned, the configuration which is the further away from the $x$-axis (that is $\xi^{\max}_x=x \wedge (2N-x)$) should be the furthest away from equilibrium.
In order to reach equilibrium, $\xi$ needs to pin itself entirely on the wall, and the most likely way to do so is to shrink the unpinned region, ``continuously'' (that is, in a way that appears continuous in the large $N$ limit) moving the extremities of the unpinned region inwards. When $G(\sigma)>F(\gl)$ the pattern should be simply the opposite: we start from the bottommost configuration and try to grow an unpinned bubble from the bulk of the interface until it reaches one of the extremities.

\medskip

Following this strategy,  for any $\beta\in (0,1)$ the dynamics must  display at some point  
an unpinned region of length $2\beta N(1+o(1))$ and a pinned region of length $2(1-\beta)N(1+o(1))$.
From Proposition \ref{th:zerocase}, we can heuristically infer that the contribution to the partition function of configurations with an unpinned proportion $\beta$ is, on the exponential scale, of order
$$\exp\left(2N [ \beta G(\beta \sigma)+ (1-\beta)F(\gl)] \right).$$
Hence in order to understand relaxation to equilibrium, we need to study the function 
$$\beta \mapsto -\beta G(\beta \sigma)-(1-\beta)F(\gl)$$ 
corresponding to the effective energy for a system constrained on having a large unpinned region of relative size $2\beta N$.
This function admits a local maximum inside the interval $[0,1]$ if and only if the equation
$ G(\beta \sigma)+ \beta \sigma G'(\beta \sigma)= F(\gl)$
admits a solution in $(0,1)$ which in turn occurs if and only if $G( \sigma)+ \sigma G'(\sigma)> F(\gl)$.

\medskip

When $G( \sigma)+ \sigma G'(\sigma)\le  F(\gl)$, when diminishing $\beta$ from $1$ to $0$,
 the effective energy $-\beta G(\beta \sigma)- (1-\beta)F(\gl)$ only decreases  (see Figure  \ref{fig:vshape}) indicating that the system should mix rapidly.
 
 \medskip
 
When $G( \sigma)+ \sigma G'(\sigma)> F(\gl)$, on the contrary in order to from $\beta$ to go from $1$ to $0$ (if $F(\gl) \geq G(\sigma)$) or $0$ to $1$ (if $F(\gl)<G(\sigma)$), it needs to overcome an energy barrier.
The height of the energy barrier to overcome is exactly $2 NE(\gl,\sigma)$ (see Figure  \ref{fig:vshape}) which yields a heuristic justification for having a mixing time of order $e^{2N E(\gl,\sigma)}$.

\medskip

Transforming this heuristic into a rigourous lower-bound on the mixing time is the easier part of the argument. Indeed the value $\beta^*$ which maximizes the effective energy should correspond to a bottleneck in the system in the sense given in \cite[Section 7.2]{LPWMCMT}.
Getting a lower bound on the mixing time from the bottleneck ratio is then a very standard and direct computation (cf. \cite[Theorem 7.4]{LPWMCMT}). 

\medskip

The upper-bound is more delicate. The strategy above assumes that only one unpinned region is formed and that the size of that unpinned region is the only relevant parameter for the estimate of the relaxation time. 
In order to obtain an upper bound, without proving these claim directly, we will use a set of techniques (induction, chain reduction, path-method...) which allows to circumvent these issues.

\subsection{Lower bound on the relaxation time.} The goal of this subsection is to prove  the following result.
\begin{proposition}\label{th:lowbdRel}
Let us assume that $\sigma>0$. Then
if $E(\gl,\sigma)>0$, then  for all $N \geq 1$, we have
\begin{equation}\label{lowbound:reltime}
T_{\Rel}^N(\gl, \gs)\geq \frac{c(\gl,\gs)}{N^2} \exp(2N E(\gl,\gs)),
\end{equation}
where $E(\gl,\gs)$ is defined in \eqref{def:actienergy}.
Moreover, if $E(\gl,\gs)=0$ , then
\begin{equation}\label{nobottlenecklb:reltime}
T_{\Rel}^N(\gl, \gs) \geq c(\gl,\gs) N.
\end{equation}

\end{proposition}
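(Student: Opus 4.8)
The plan is to treat the two assertions separately, with the exponential lower bound (the case $E(\gl,\sigma)>0$) being the substantive one. For that case the strategy is a standard bottleneck/conductance argument: we produce a set $A\subset\gO_N$ whose equilibrium probability is of order one (in fact bounded away from $0$ and $1$ on the exponential scale) but whose boundary has exponentially small equilibrium weight, and then invoke the bottleneck-ratio lower bound on the relaxation time (cf.\ \cite[Theorem 7.4, Section 7.2]{LPWMCMT}), namely $T^N_{\Rel}(\gl,\gs)\ge \frac{1}{2}\Phi_\star^{-1}$ where $\Phi_\star$ is the bottleneck ratio. The natural choice dictated by the heuristic in Section \ref{heuristicz} is to cut the configuration space at the value $\beta^\ast$ maximizing the effective energy $V(\beta)=-\beta G(\beta\sigma)-(1-\beta)F(\gl)$; concretely one takes $A=\{\xi : L_{\max}(\xi)\le \beta^\ast N\}=\mathcal{E}_N^1$ (or its complement, whichever contains the thermodynamically favored phase — this is exactly the dichotomy encoded in \eqref{def:cHN}).

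The key steps, in order, are: (i) show $\mu_N(A)$ and $\mu_N(\gO_N\setminus A)$ are each bounded below, on the exponential scale, by $e^{-o(N)}$ — this follows by comparing with $Z_N(\gl,0)$ and $Z_N(0,\sigma)$ via \eqref{lowerbound} and the partition-function estimates of Proposition \ref{th:asymppf}, together with the shape/excursion statements (Proposition \ref{th:upbpinningstr}, Proposition \ref{th:upasymptareadom}, Theorem \ref{th:shapeandcontact1}) which guarantee both phases carry non-negligible mass; (ii) estimate the numerator $\sum_{\xi\in A,\ \xi'\notin A}\mu_N(\xi)r_N(\xi,\xi')$: every transition crossing the boundary changes $L_{\max}$ by at most $1$, so the boundary consists of configurations $\xi$ with $L_{\max}(\xi)=\lceil\beta^\ast N\rceil$ together with a single corner flip; such configurations have an unpinned excursion of relative length $\approx\beta^\ast$, hence their total $\mu_N$-weight is controlled on the exponential scale by $\exp(2N[\beta^\ast G(\beta^\ast\sigma)+(1-\beta^\ast)F(\gl)])/Z_N(\gl,\sigma)$, which is $e^{-2NE(\gl,\sigma)(1+o(1))}$ — here one uses the excursion decomposition \eqref{excupath}, the uniform estimate \eqref{sigmazero} on $Z_n(0,\cdot)$, and the pinned-part estimate \eqref{asymppinning}; (iii) combine (i) and (ii) to get $\Phi_\star\le C N^{?}e^{-2NE(\gl,\sigma)}$, and conclude via \cite[Theorem 7.4]{LPWMCMT}. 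Getting the polynomial prefactor $N^{-2}$ exactly (rather than just subexponential) will require being careful in step (ii): one should sum the weights of boundary configurations by first fixing the location and the two endpoints of the critical excursion (this costs a factor $N^2$) and then applying the sharp $N^{-1/2}$-type bounds, so the polynomial loss is genuinely of order $N^2$ and no worse.

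For the second assertion ($E(\gl,\sigma)=0$, lower bound $cN$), the argument is the usual diameter-type lower bound: the relaxation time is bounded below by $\Var_{\mu_N}(f)/\mathcal{E}(f)$ for a cleverly chosen test function, and the natural choice is $f(\xi)=A(\xi)/N$ or $f(\xi)=\max_x \xi_x$ (or $f=L_{\max}$), whose variance under $\mu_N$ is of order $N^2$ while its Dirichlet form $\mathcal{E}(f)=\sum_x\mu_N((Q_xf-f)^2)$ is of order $N$ because a single corner flip changes these observables by $O(1)$ and there are $2N-1$ coordinates; the ratio is then $\gtrsim N$. One has to check the variance is genuinely of order $N^2$ and not smaller, which for $A(\xi)/N$ follows from the scaling-limit/CLT-type fluctuations of the area (and in the pinned regime from the $\log N$-height estimates, which still give variance $\gg N$ for an appropriate linear statistic — if the height is only logarithmic one instead uses $f=H(\xi)$, the number of contacts, whose variance is of order $N$ and Dirichlet form of order $1$, again giving ratio $\gtrsim N$).

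The main obstacle I anticipate is step (ii) of the exponential case: turning the heuristic "the boundary has weight $e^{-2NE}$" into a rigorous inequality with the correct polynomial prefactor. The difficulty is that a configuration on the boundary is not required to have \emph{only one} large excursion — in principle it could have its mass concentrated on configurations with several excursions whose largest has length exactly $\beta^\ast N$ — so one must either show via \eqref{excupath} and convexity of $G$ that multi-excursion configurations are exponentially suppressed relative to the single-excursion ones (so that the dominant contribution to the boundary weight is indeed the single-bubble configurations), or, more simply, bound the boundary weight directly by summing over the location of \emph{a} maximal excursion of length $\lceil\beta^\ast N\rceil$, absorbing the remaining pinned mass into a factor $\le e^{2(N-\beta^\ast N)F(\gl)}$ and the remaining unpinned excursion into $Z_{\lceil\beta^\ast N\rceil}(0,\beta^\ast\sigma)$, which is exactly the estimate that produces the $e^{-2NE}$ with an $N^2$ loss. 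Making the denominator lower bound $Z_N(\gl,\sigma)\ge c\,e^{2N(F(\gl)\vee G(\sigma))}/\sqrt N$ interact cleanly with this numerator bound is the last bookkeeping step, and is handled by Proposition \ref{th:asymppf}.
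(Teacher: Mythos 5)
Your treatment of the case $E(\gl,\gs)>0$ is essentially the paper's argument: the paper plugs $f=\ind_{\cE_N^1}$ into the variational formula \eqref{defRelTime}, which is the same as your conductance bound, and the whole content is the three sharp estimates of Proposition \ref{estimates} ($\bZ(\cE_N^1)\asymp e^{2NF(\gl)}$, $\bZ(\cE_N^2)\asymp N^{-1/2}e^{2NG(\gs)}$, $\bZ(\partial\cE_N^1)\asymp \sqrt{N}\,e^{2N[\beta^*G(\beta^*\gs)+(1-\beta^*)F(\gl)]}$), obtained exactly as you indicate from the excursion decomposition and the $\gl=0$ estimates. Two imprecisions in this part are worth flagging. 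First, it is false that both $\mu_N(\cE_N^1)$ and $\mu_N(\cE_N^2)$ are $e^{-o(N)}$: the metastable well has probability $\approx e^{-2N|F(\gl)-G(\gs)|}$ up to polynomial factors, and this exponentially small quantity must be tracked sharply since it enters the final bound multiplicatively (your comparison with $Z_N(\gl,0)$ and $Z_N(0,\gs)$ does give the sharp value, so the method is right, but step (i) as literally stated combined with step (iii) only yields $e^{2NE-o(N)}$). Second, a corner flip at a contact $(1,0,1)\to(1,2,1)$ merges two excursions and can increase $L_{\max}$ by much more than $1$; hence $\partial\cE_N^1$ consists of configurations having two adjacent excursions, each of length $\le\beta^*N$, whose lengths sum to more than $\beta^*N$, and bounding its weight requires an additional sum over the splitting point, controlled via convexity of $u\mapsto uG(\gs u)$ (this is \eqref{boundarycorner} in the paper). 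These are repairable details.

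The genuine gap is in the case $E(\gl,\gs)=0$. Since $E(\gl,\gs)=0$ forces $G(\gs)+\gs G'(\gs)\le F(\gl)$ and $\gs>0$, one has $G(\gs)<F(\gl)$: this regime is always localized, with heights of order $\log N$ and exponentially decaying correlations. There, none of your proposed test functions achieves $\Var_{\mu_N}(f)/\mathcal{E}(f)\gtrsim N$: for $f=A(\xi)/N$ one has $\Var_{\mu_N}(f)\asymp N^{-1}$ and $\mathcal{E}(f)\asymp N^{-1}$ (there are $\Theta(N)$ flippable corners, each contributing $\asymp N^{-2}$), so the ratio is $O(1)$; for $f=H(\xi)$ the Dirichlet form is not $O(1)$ but $\Theta(N)$, because there is a positive density of sites with $\xi_{x-1}=\xi_{x+1}=1$, each contributing $\Theta(1)$, while $\Var(H)\asymp N$, again giving $O(1)$; and $\max_x\xi_x$ and $L_{\max}$ have at most polylogarithmic fluctuations. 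The slow mode when $E=0$ is not a bulk linear statistic but the nucleation of a macroscopic bubble, which linear observables cannot see because such configurations carry exponentially small probability. The paper instead takes $f_a(\xi)=\exp(\tfrac{a}{N}A(\xi))$ with $a$ chosen so that $G(\gs+a)\le F(\gl)<G(\gs+2a)$: then $\mathcal{E}(f_a)\le CN^{-1}\mu_N(f_a^2)$ while $\mu_N(f_a)^2/\mu_N(f_a^2)=Z_N(\gl,\gs+a)^2/\bigl(Z_N(\gl,\gs)Z_N(\gl,\gs+2a)\bigr)$ is exponentially small by Proposition \ref{th:asymppf}, so $\Var_{\mu_N}(f_a)\ge\tfrac{1}{2}\mu_N(f_a^2)$ and the ratio is $\gtrsim N$. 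This exponential tilt — which makes the second moment dominated by delocalized configurations while the first moment remains dominated by localized ones — is the idea missing from your plan.
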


To obtain \eqref{lowbound:reltime},
we simply evaluate the minimized quantity  \eqref{defRelTime} for a function $f$ 
which is the indicator of our bottleneck event $f\colonequals \ind_{\mathcal{E}_N^1}$
  where  $\mathcal{E}_N^1$ is defined in \eqref{partitionintwo}.
To estimate the Dirichlet form of this function we need to introduce the internal boundary of $\cE^1_N$ defined by
\begin{equation}\label{boundaryset}
 \partial \mathcal{E}_N^1 \colonequals \Big\{\xi \in \mathcal{E}_N^1: \exists x\in \lint 1, 2N-1 \rint, \xi^x \not\in \mathcal{E}_N^1 \Big\},
\end{equation}
 and set for any event $B\subset \gO_N$
\begin{equation}
\mathbf{Z}(B) = \mathbf{Z}_{\gl,\sigma}(B) \colonequals \mu_N(B) Z_N(\gl,\sigma)=\sum_{\xi \in B} 2^{-2N}\lambda^{H(\xi)}\exp\left(\tfrac{\sigma}{N}A(\xi)\right).
\end{equation}
The more important computation in this section is the estimate of the 
relative weight of each of the $\cE^i_N$ and of the boundary separating them.
\begin{proposition}\label{estimates}
If $E(\gl,\sigma)>0$, then there exists a constant $C=C(\gl,\sigma)$ such that for every $N\ge 1$
\begin{equation}\begin{split}
 C^{-1} &\le \mathbf{Z}\left(\mathcal{E}_N^1\right) e^{-2N F(\gl)}  \le C,\\
  C^{-1}\le &  N^{1/2}\mathbf{Z}\left(\mathcal{E}_N^2\right)  e^{-2N G(\sigma)} \le  C.
\end{split}
\end{equation}
Furthermore, we have
\begin{equation}\label{partfunbdset}
\frac{1}{C } \le \frac{ \bZ \left(\partial \mathcal{E}_N^1 \right)}{ \sqrt{N}e^{ 2\gb^*N G(\gb^* \gs)+2N(1-\gb^*) F(\gl)}} \leq C .
\end{equation}

\end{proposition}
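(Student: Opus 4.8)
\emph{Overall strategy.} All three estimates reduce to the renewal decomposition \eqref{excupath}--\eqref{renewexpress} together with the sharp single-excursion asymptotics of Proposition~\ref{th:zerocase}. The first step is a combinatorial description of the three sets. Writing a path as a concatenation of excursions of half-lengths $n_1,\dots,n_k$ as in \eqref{excupath}, we have $L_{\max}(\xi)=\max_i n_i$, so $\mathcal{E}_N^1=\{\max_i n_i\le\beta^* N\}$ and $\mathcal{E}_N^2$ is its complement. Moreover, inspecting \eqref{flippingcorner}--\eqref{jumprate}, the only corner flips with positive rate that change the zero set of $\xi$ are those at a contact $x$ with $\xi_{x-1}=\xi_{x+1}=1$, where flipping $\xi_x$ from $0$ to $2$ merges the two excursions adjacent to $x$ into one, of half-length equal to the sum of theirs. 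Hence $\xi\in\partial\mathcal{E}_N^1$ precisely when all excursions of $\xi$ have half-length $\le\beta^* N$ but some pair of \emph{consecutive} excursions has combined half-length $>\beta^* N$. The analytic fact underpinning everything is that, by definition of $\beta^*$ (see \eqref{defbetastar}) and because $\sigma>0$ forces $G'(\beta^*\sigma)>0$, one has the \emph{strict} inequality $G(\beta^*\sigma)<F(\lambda)$ together with the identity $\psi'(\beta^*)=F(\lambda)$, where $\psi(\beta):=\beta G(\beta\sigma)$ is strictly convex. Combined with \eqref{unifupbd:tildeKn} this gives $\tilde K(n)\le C(\lambda,\sigma)e^{2n(G(\beta^*\sigma)-F(\lambda))}$ for $n\le\beta^* N$, and with \eqref{zecut} it yields $\sum_{n\le\beta^* N}\tilde K(n)=1+O(1/N)$. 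A routine renewal computation then shows that, for the weight $\mathbf{Z}^{(a)}$ of a path of length $2a$ with the area tilt kept at scale $N$,
\[ \mathbf{Z}^{(a)}\big(\text{all excursions}\le\beta^* N\big)\asymp e^{2aF(\lambda)}\qquad(1\le a\le N), \]
with constants depending only on $\lambda,\sigma$: the upper bound follows from $\lambda e^{-2aF(\lambda)}\mathbf{Z}^{(a)}(\cdots)=\sum_k S^k\hat K^{*k}(a)$ with $S=1+O(1/N)$, $\sum_k\hat K^{*k}(a)\le 1$ and $S^k\le S^N\le e^{O(1)}$ for $k\le a$; the lower bound by comparison with the honest ($\sigma=0$) renewal law $q(n)=\lambda K(n)e^{-2nF(\lambda)}$ (which sums to $1$ by \eqref{sumstoone}), the renewal theorem, and $\sum_{n>\beta^* N}q(n)=O(e^{-cN})$. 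Taking $a=N$ gives the first line of Proposition~\ref{estimates}.

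For $\mathbf{Z}(\mathcal{E}_N^2)$ we split into cases. If $G(\sigma)>F(\lambda)$, then $\mathbf{Z}(\mathcal{E}_N^1)=O(e^{2NF(\lambda)})=o(N^{-1/2}e^{2NG(\sigma)})$, so $\mathbf{Z}(\mathcal{E}_N^2)=Z_N(\lambda,\sigma)-\mathbf{Z}(\mathcal{E}_N^1)=Z_N(\lambda,\sigma)(1+o(1))$ and Proposition~\ref{th:asymppf}(1) finishes. If $G(\sigma)\le F(\lambda)$, we decompose a configuration of $\mathcal{E}_N^2$ around its leftmost excursion of half-length $\ell>\beta^* N$, bounding the two flanks by $Ce^{2aF(\lambda)}$, $Ce^{2bF(\lambda)}$ (Proposition~\ref{th:upbpinningstr}, uniform in $\sigma$) and the big excursion by $C N^{-1/2}e^{2\ell G(\sigma\ell/N)}$ (Proposition~\ref{th:zerocase}); since $g(\beta):=(1-\beta)F(\lambda)+\beta G(\beta\sigma)$ is strictly convex with $g'(\beta^*)=0$, it is increasing on $[\beta^*,1]$ with maximum $g(1)=G(\sigma)$, and the secant bound $g(\beta)\le G(\sigma)-c(1-\beta)$ makes the sum over $\ell$ geometric, producing the matching upper bound. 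The lower bound is obtained from the explicit family consisting of one excursion of half-length $N-1$ and one pinned bump of length $2$, using the lower bound in \eqref{sigmazero}.

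The heart of the matter is \eqref{partfunbdset}. Decompose $\xi\in\partial\mathcal{E}_N^1$ according to a contact $x$ witnessing the defect: this splits $\xi$ into a left piece of length $2a$ with all excursions $\le\beta^* N$, an excursion of half-length $\ell_1\le\beta^* N$, an excursion of half-length $\ell_2\le\beta^* N$ with $\ell_1+\ell_2>\beta^* N$, and a right piece of length $2b$ with all excursions $\le\beta^* N$, with $a+\ell_1+\ell_2+b=N$; only $C(\beta^*)$ contacts can witness the defect, so summing over all of them overcounts by a bounded factor. Bounding the flanks by $Ce^{2aF(\lambda)},Ce^{2bF(\lambda)}$ (paragraph above) and the excursions by $C\ell_i^{-1/2}e^{2\ell_i G(\sigma\ell_i/N)}$ (from \eqref{sigmazero}), one is reduced to summing $e^{2(a+b)F(\lambda)}(\ell_1\ell_2)^{-1/2}e^{2\ell_1 G(\sigma\ell_1/N)+2\ell_2 G(\sigma\ell_2/N)}$. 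For fixed $L=\ell_1+\ell_2$, strict convexity of $\psi$ forces $\psi(\ell_1/N)+\psi(\ell_2/N)$ to be maximal at the corner $\ell_1=\beta^* N$, $\ell_2=L-\beta^* N$, while $\sum_{\ell_1+\ell_2=L}(\ell_1\ell_2)^{-1/2}=O(1)$; writing $L=\beta^* N+j$ and summing over $a+b=N-L$ (a factor $\lesssim N$) the exponent becomes $2N[(1-\beta^*)F(\lambda)+\psi(\beta^*)]+2Nh(j/N)$, where $h(s):=\psi(s)-sF(\lambda)$ satisfies $h(0)=0$, $h'(0)=-F(\lambda)<0$ and — using crucially the constraint $\ell_2\le\beta^* N$, i.e.\ $j\le\beta^* N$ — $h\le 0$ on $(0,\beta^*]$. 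Hence $\sum_{j\ge1}(1+j)^{-1/2}e^{2Nh(j/N)}=O(1)$, and the factor $\lesssim N$ combined with the $N^{-1/2}$ from $\ell_1\asymp N$ gives exactly the announced $\sqrt N$. The matching lower bound comes from the explicit family: a pinned left flank of length $\approx2(1-\beta^*)N$, split into two pieces of length $\asymp N$ in $\asymp N$ ways, followed by one excursion of half-length $\lfloor\beta^* N\rfloor$ and one excursion of half-length $1$; summing the weights and using the lower bound in \eqref{sigmazero} for the macroscopic excursion reproduces $c\sqrt N\,e^{2\beta^* N G(\beta^*\sigma)+2N(1-\beta^*)F(\lambda)}$.

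The main obstacle is the bookkeeping in the upper bound for $\mathbf{Z}(\partial\mathcal{E}_N^1)$: one must verify that the sum over the position of the flanks (order $N$), the entropic $N^{-1/2}$ of the macroscopic excursion, and the sum over the ``small'' excursion length conspire to give \emph{exactly} $\sqrt N$, not $N^{3/2}$ or $N$. This rests on the strict convexity of $\psi$, on the identity $\psi'(\beta^*)=F(\lambda)$ (which makes the competition between the macroscopic excursion and the pinned flanks critical at the saddle), and on the constraint that \emph{every} excursion — including the small one partnering the macroscopic one — has half-length $\le\beta^* N$, which prevents the small excursion from itself becoming area-favourable. Everything else is a lengthy but routine application of the renewal estimates of Section~\ref{sec:partitionfunction}.
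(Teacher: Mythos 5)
Your proposal is correct and follows essentially the same route as the paper: the renewal decomposition with the key input $\sum_{n\le\beta^* N}\tilde K(n)=1+O(1/N)$ (coming from $G(\beta^*\sigma)<F(\lambda)$) for $\mathbf{Z}(\mathcal{E}_N^1)$, the convexity/secant bound on $\beta\mapsto(1-\beta)F(\lambda)+\beta G(\beta\sigma)$ for $\mathbf{Z}(\mathcal{E}_N^2)$, and for the boundary the same $(a,\ell_1,\ell_2,b)$ splitting around the witnessing contact together with the corner-localization by convexity of $\psi$ and the matching explicit lower-bound family. One point to tighten: in the final $j$-sum for \eqref{partfunbdset}, the bound $h\le 0$ on $(0,\beta^*]$ alone only gives $\sum_{j}(1+j)^{-1/2}e^{2Nh(j/N)}=O(\sqrt N)$ (which would spoil the exponent of $N$); you need the linear decay $h(s)\le -\bigl(F(\lambda)-G(\beta^*\sigma)\bigr)s$, which does follow from the convexity of $h$ and $h(\beta^*)<0$ that you already have — and relatedly, you should use the corner value $N^{-1/2}(1+j)^{-1/2}$ of the entropic prefactor rather than the cruder ``$\sum_{\ell_1+\ell_2=L}(\ell_1\ell_2)^{-1/2}=O(1)$ times the maximal exponential'', since the latter accounting only yields $O(N)$.
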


\begin{proof}[Proof of Proposition \ref{th:lowbdRel}]
We first deal with the case $E(\gl,\gs)>0$.
 By definition, we know that
$\Var_{\mu_N}(f)=\mu_N \left( \mathcal{E}_N^1 \right) \mu_N \left(\mathcal{E}_N^2\right)$ and
$\mathcal{E}(f)
\leq 2N\mu_N \left(\partial\mathcal{E}_N^1\right)$,
where the last inequality uses the fact that
$\sum_{\xi'\in \gO_N} r_N(\xi, \xi')\leq 2N$ for all $\xi\in \Omega_N$. 
Thus, we have
\begin{equation}\label{lowbdRel1step}
T_{\Rel}^N(\lambda, \sigma)\geq \frac{\mu_N\left(\mathcal{E}_N^1\right)\mu_N\left(\mathcal{E}_N^2\right)}{2N\mu_N\left(\partial \mathcal{E}_N^1\right) }= \frac{\bZ\left(\mathcal{E}_N^1\right)\bZ\left(\mathcal{E}_N^2\right)}{2N \bZ\left(\partial \mathcal{E}_N^1\right) Z_N(\gl,\sigma)}. 
 \end{equation}
 Therefore, by Proposition \ref{estimates} and Proposition \ref{th:asymppf} we have
\begin{equation}
T_{\Rel}^N(\lambda, \sigma)\geq \frac{1}{C N^2} e^{2N E(\gl,\gs)}.
\end{equation}
We move to the case $E(\gl, \gs)=0$ and adopt the strategy of \cite[Proposition 5.1]{caputo2008approach}. We  plug the test function $f_a(\xi)= \exp(\frac{a}{N}\sum_{x=1}^{2N} \xi_x)$ with $a>0$ in \eqref{defRelTime} and estimate the Dirichlet form for $f_a$.
 Since $\vert Q_x(f_a)-f_a \vert \le \frac{C}{N} f_a$ for all $x \in \lint 1, 2N \rint$, we have
 $$ \cE(f_a) \le \frac{2C^2}{N} \mu_N(f_a^2),$$
 and then
 \begin{equation}\label{lowbd:relaxstable}
 T_{\Rel}^N(\gl, \gs) \ge \frac{\mu_N(f_a^2)-\mu_N(f_a)^2}{\frac{2C^2}{N} \mu_N(f_a^2)}= \frac{N}{2C^2} \left(1-\frac{Z_N(\gl, \gs+a)^2}{Z_N(\gl, \gs) Z_N(\gl, \gs+2a)}\right).
 \end{equation}
 By Proposition \ref{th:asymppf}, we choose the constant $a$  such that $G(\sigma+a)\leq F(\lambda)<G(\sigma+2a)$,  and then the r.h.s. of \eqref{lowbd:relaxstable} is larger than or equal to
 $$
  \frac{N}{2C^2}\left(1- \exp(-c N)\right),$$ 
 which allows us to conclude.

\end{proof}

\begin{proof}[Proof of Proposition \ref{estimates}]
 Recalling that $\beta^*$ is the unique solution of \eqref{defbetastar}, we have $G(\sigma \beta^*)<F(\gl)$.
 Using this observation, using the definition \eqref{defKn} we have from the proof of Lemma \ref{th:renewestimate} that for every $N\ge \sigma$
 \begin{equation}\label{cruxial}
  \sum_{n=1}^{\beta^* N} \tilde K(n)\le 1+\frac{ \sigma C(\gl)}{N}.
 \end{equation}
 Indeed \eqref{zecut} yields the right-bound  for the summation over $1 \le n \le \sqrt{N/\gs}$, 
 it is then sufficient to replace $N$ by $\beta^* N$ in 
 \eqref{pindomupbound:bigjumps} and use the first inequality in \eqref{unifupbd:tildeKn} to obtain 
 \begin{equation}
  \sum_{n= \sqrt{N/\sigma}+1}^{\beta^* N} \widetilde{K}(n)\le \sum_{n= \sqrt{N/\sigma}+1}^{\beta^* N}   \gl e^{2n\left[(G(\beta^* \sigma)- F(\gl))+ \frac{\sigma^2 }{N^2}\right]}\le C'(\gl)e^{-c(\gl)\sqrt{ N/\sigma}}.
 \end{equation}
 For the last inequality above, we simply have observed that $\sigma \beta^*$ depends only on $\gl$.
 Now we start with a decomposition in  \eqref{excupath} and proceed as in the proof of Proposition \ref{th:upbpinningstr} to obtain 
\begin{equation}\label{partfunE1}
 \bZ(\cE^1_N)=\sum_{k\ge 1}\sumthree{n_1,\dots,n_k}{\sum_{i=1}^{k} n_i=N}{ n_i\le \beta^* N} \gl^{k-1} \prod_{i=1}^k Z_{n_i}\left(0,\frac{\sigma n_i}{N}\right) \le \gl^{-1} e^{2N F(\gl)} \left(1+\frac{C}{N}\right)^N \hat \bP[ N\in \hat \tau] \le C' e^{2N F(\gl)},
 \end{equation}
 where $\hat \tau$ is a renewal with interarrival law 
 \begin{equation}\label{onemorehat}
 \hat K(n)= \tilde K(n)\ind_{\{n\le \beta^*N\}}/ \left(\sum_{m=1}^{\beta^* N} \tilde K(m)\right).
 \end{equation}
 For the lower bound, observe that by monotonicity for any $\gep>0$ (hence in particular for $\gep=\beta^*(\gl,\sigma)$)
 $$\mathbf{Z}_{\gl,\sigma}(L_{\max}\le \gep N)\ge \mathbf{Z}_{\gl,0}(L_{\max}\le \gep N)=
 \mu^{\gl,0}_N(L_{\max}\le \gep N) Z_N(\gl,0),$$
 and we can then use \eqref{asymppinning} and \eqref{pindomshape} (in the easier case $\sigma=0$) to conclude.

\medskip

\noindent
 For  $\bZ(\cE^2_N)$
we first notice that
 by Proposition \ref{th:zerocase}, we have
\begin{equation}
\bZ(\cE_N^2) \ge Z_N(0,\gs) \ge \frac{1}{ C_{\gs}\sqrt{N}} e^{2N G(\gs)}.
\end{equation}
and thus we can focus on the proof of the upper bound.

\medskip

 We proceed as for \eqref{renewexpress3}, but with a threshold at size $\beta^*N$ for big jumps. We have
 \begin{equation}
 \bZ(\cE^2_N)\le  \left(1+ \frac{C}{N}\right)^N 
 \sum_{k=1}^{\infty} \gl^{k- 1} \sum_{ ({\bf l}, {\bf m})\in \cA^{(\beta^*)}_{N,k}} \prod_{i=0}^k e^{2m_i  F(\gl) } \hat \bP( m_i \in \tau)  \prod_{j=1}^k Z_{l_j}\left(0,\frac{\sigma l_j}{N}\right)
 \end{equation}
 with  $\cA^{(\beta^*)}_{N,k}$ defined in \eqref{defank}.
 Let us first control the contribution to the sum of the $k=1$ term. Using \eqref{compar} it is bounded above by  
 \begin{equation}
 C_{\sigma}  (N  \gb^*)^{-1/2} \sumtwo{m_0,m_1}{m_0+m_1\le N(1- \beta^* )} e^{2(N-m_0-m_1) G\left(\sigma \left(1-\frac{m_0  + m_1}{N}\right)\right)  +2(m_0+m_1)F(\gl) }\le  C(\gl,\gs) N^{-1/2}e^{2NG(\sigma)}
 \end{equation}
where the last inequality is a consequence of the fact that when $m_0+m_1\le N(1- \beta^* )$ then
\begin{multline}\label{convexitrick}
(N-m_0-m_1) G\left(\sigma \left(1-\frac{m_0  + m_1}{N}\right)\right)  +(m_0+m_1)F(\gl)
\\ \le N G(\sigma)-(m_0+m_1) \frac{ G(\sigma)- \beta^* G(\sigma\beta^*)-(1-\beta^*)F(\gl)}{1-\beta^*},
\end{multline}
which itself derives from convexity (in $\bbR_+$) of $u \mapsto u G(\sigma u) + (1-u)F(\gl)$. 
For any $k\ge 2$ (and smaller than $(\beta^*)^{-1}$) a similar computation gives us that the $k$-th term in the inequality is smaller than  
$$ N^{2k} e^{2N \bar G(\sigma, k)} \quad \text{ with } \quad \bar G(\sigma, k):= \suptwo{\beta_1,\dots, \beta_k \in (\beta^*,1)}{\sum \beta_i\le 1}\left( \sum_{i=1}^k \beta_i G\left(\sigma\beta_i\right)+ (1-\sum_{i=1}^k \beta_i) F(\gl)\right). $$
The result then follows from the fact that $\bar G(\sigma, k)<G(\sigma)$.

 \medskip

Now let us move to the case of  $\bZ(\partial \cE^1_N)$. If $\xi \in \partial \cE^1_N$, then it means that there is $x\in\lint 0, N\rint$ such that $\xi_{2x}=0$ and  $\xi^{2x} \in  \cE^2_N$. 
Hence if $a$ and $b$ are such that $a<x<b$ and, $\xi_{2a}=\xi_{2b}=0$ and $\xi_{2y}>0$ for $y\in \lint a,b\rint 
\setminus\{x\}$ then one must have
\begin{equation}\label{abx}
\max(b-x,x-a)\le N \beta^* \quad \text{ and }  \quad b-a> N\beta^*.
\end{equation}
Decomposing over all possible values for $a$, $b$ and $x$ we find 
\begin{multline}\label{zfrontier}
 \bZ(\partial \cE^1_N) \le \gl^3 \sumtwo{a,b\in \lint 0,N\rint} {\beta^* N<b-a \le 2\beta^*N} \sum_{x=b-\beta* N}^{a+\beta^*N}  \\ \times 
 \overline Z^{(N)}_a(\gl,\sigma)Z_{x-a}\left(0,\frac{(x-a)\sigma}{N}\right)Z_{b-x}\left(0,\frac{(b-x)\sigma}{N}\right) \overline Z^{(N)}_{N-b}(\gl,\sigma),
\end{multline}
where   $\overline Z^{(N)}_m(\sigma,\gl)$ corresponds to a partition function with a constraint of having no large jumps:
\begin{equation}
  \overline Z^{(N)}_m(\gl,\sigma) \colonequals \sum_{k\ge 1}\sumthree{n_1,\dots,n_k}{\sum_{i=1}^k n_k=m}{ n_i\le \beta^* N} \gl^{k-1} \prod_{i=1}^k Z_n\left(0,\frac{\sigma n_i}{N}\right).
\end{equation}
From the upper bound  on   $\bZ(\cE^1_N)$, we have   $\overline Z^{(N)}_m(\sigma,\gl)\le C e^{2m F(\gl)}$.
Using the upper bound in  \eqref{compar} and observing that at least one of the two length $(x-a)$ or $(b-x)$ is of order $N$ we obtain that 
\begin{multline}\label{boundarycorner}
 \sum_{x=b-\beta^* N}^{a+\beta^*N} Z_{x-a}\left(0,\frac{(x-a)\sigma}{N}\right)Z_{b-x}\left(0,\frac{(b-x)\sigma}{N}\right)\\
 \le C N^{-1/2} \sum_{y=0}^{2\beta^*N-b+a} e^{2(\beta^*N-y)G\left(\sigma \left(\beta^*-\frac{y}{N}\right)\right)+  2(b-a-\beta^* N+y)G\left(\sigma \left(\frac{(b-a+y)}{N}-\beta^*\right)\right)} \\ \le 2C N^{-1/2} 
 e^{2\beta^* N G\left(\sigma \beta^* \right)+  2(b-a-N \beta^*) G\left( \sigma \big(\frac{b-a}{N}-\beta^*\big)\right)}\\
 \sum_{y=0}^{(2\beta^*N-b+a)/2}
 e^{\frac{4y}{(2\beta^*N-b+a)}\left[(b-a)G\left(\frac{\sigma(b-a)}{2N}\right)-\beta^* N  G\left(\sigma \beta^* \right)-  (b-a-N \beta^*) G\left( \sigma \big(\frac{b-a}{N}-\beta^*\big)\right)\right]}
\end{multline}
where in the last inequality we used the fact that second half of the sum is equal to the first half and the convexity of the function $$u \mapsto (\beta^*- u) G( \sigma(\beta^*- u)+\left(\frac{b-a}{N}-\beta^*+u\right)G\left(\sigma\left(\frac{b-a}{N}-\beta^*+u\right)\right) $$ on $[0,(2\beta^*N-b+a)/2N]$.
Now if  $(b-a)\le 3\beta^*N/2$, the sum in the last line of \eqref{boundarycorner} 
is bounded above by a constant (since we are summing something smaller than $e^{-c(\gl,\sigma)y}$). If $(b-a)> 3\beta^*N/2$, we bound the sum above by $N$.
Going back to \eqref{zfrontier}, we obtain altogether that
\begin{multline}
  \frac{ \bZ(\partial \cE^1_N)}{e^{ 2\gb^*N G(\gb^* \gs)+2N(1-\gb^*) F(\gl)}}\\
  \le C N^{-1/2} \sumtwo{a,b\in \lint 0,N\rint} {\beta^* N<b-a \le 2\beta^*N}
   e^{2N\left[ \left( \frac{b-a}{N}-\beta^*\right) \left( G\left( \left(\frac{(b-a)}{N}-\gb^*\right) \gs\right)-F(\gl)\right) \right]+ (\log N) \ind_{\{ (b-a)> 3\beta^*N/2\}}}\\
   \le C \sqrt{N}\sum_{k=1}^{\beta^*N} e^{2 k \left( G\left(\frac{k\sigma}{N}\right)-F(\gl)\right) + (\log N) \ind_{\{ k> 3\beta^*N/2\}}}
   \le C' \sqrt{N},
   \end{multline}
where the last inequality follows from the fact that $G\left(\beta^* \gs\right)-F(\gl)<0$.
To obtain the convert bound, we just need to consider the contribution to the sum of $a,b,x$ such that 
$x=a+\beta^*N$ and $b=x+1$, and to avoid double counting,  we impose the constraint that there is no jump of size larger than $N\beta^*/2$ outside of $(a,b)$.
 Therefore, let $a' \colonequals (1-\gb^*)N-a-1 $ and we have
\begin{multline}
\bZ(\partial \cE_N^1) \ge Z_{\gb^*N}(0, \gs \gb^*) \sum_{a =0}^{ (1-\gb^*)N-1} Z_a(\gl, 0) \mu_a^{\gl, 0}\left(L_{\max} \le \tfrac{ \gb^*N}{2} \right) 
   Z_{a'}(\gl, 0) \mu_{a'}^{\gl, 0}\left(L_{\max} \le \tfrac{\gb^*N}{2} \right)\\
 \ge \frac{1}{C} \sqrt{N} e^{2N\left(\gb^*  G(\gs \gb^*)+ (1-\gb^*)F(\gl)\right)},
\end{multline}
where the last inequality follows from Proposition \ref{th:asymppf} and \eqref{pindomshape}.

\end{proof}

\section{Upper bounds on the relaxation time} \label{sec:upbrel}

\subsection{Stating the results}

Let us state here the two main statements that we are going to prove in this section and which,
together with Proposition \ref{th:lowbdRel}, provides a complete proof of Theorem \ref{th:relax}.
The proof of these propositions will also provide most of the ingredients required to prove the metastable behavior of the system when $E(\gl,\sigma)>0$, that is Theorem \ref{th:metastablemacro}.

\medskip

\noindent We first prove that the system mixes in polynomial time when the activation energy is zero.

\begin{proposition}\label{th:upreltimestable}
Given $\gl>2$ there exists a constants $C(\gl)$ and $\tilde C(\gl)$ such that for all $\sigma$ satisfying 
$E(\gl,\sigma)=0$, for all $N\ge 1$ we have
\begin{equation}
T_{\Rel}^N(\gl, \gs) \le C(\gl) N^{\tilde C(\gl)}.
\end{equation}
\end{proposition}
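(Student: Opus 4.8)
The plan is to establish the polynomial upper bound on the relaxation time in the regime $E(\gl,\sigma)=0$ by combining a comparison argument with the well-understood pure pinning dynamics ($\sigma=0$) and a careful control of the effect of the area tilt. The starting observation is that when $E(\gl,\sigma)=0$ the effective potential $V(\beta)=-\beta G(\beta\sigma)-(1-\beta)F(\gl)$ is monotone (Figure \ref{fig:vshape}(a)), so there is no bottleneck and one expects the same mechanism as in the pure pinning case to govern relaxation. Concretely, I would first recall that for $\sigma=0$ the relaxation time of the corner-flip dynamics on $\Omega_N$ is polynomial in $N$ (this is the content of \cite{caputo2008approach, yang2019cutoff} and can be quoted directly), say $T_{\Rel}^N(\gl,0)\le C(\gl)N^{c(\gl)}$.

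The key step is a Dirichlet-form comparison between $\mu_N^{\gl,\sigma}$ and $\mu_N^{\gl,0}$. Since the two measures differ only by the factor $\exp(\frac{\sigma}{N}A(\xi))$ and $0\le A(\xi)\le N^2$ on $\Omega_N$ with $\sup_{x}\xi_x$ typically of order $\log N$ on the relevant part of the space, the Radon--Nikodym derivative $\dd\mu_N^{\gl,\sigma}/\dd\mu_N^{\gl,0}$ is controlled: on configurations with $L_{\max}(\xi)\le C\log N$ one has a bounded ratio, while the complementary event has exponentially small probability under both measures (by \eqref{pindom:excurdecay} when $G(\sigma)<F(\gl)$, and by a truncation at scale $(1-\gep)N$ together with \eqref{equalcase} when $G(\sigma)=F(\gl)$, which is the only way $E(\gl,\sigma)=0$ is compatible with $G(\sigma)=F(\gl)$). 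One then uses the standard two-sided comparison: $\Var_{\mu_N^{\gl,\sigma}}(f)\le A\,\Var_{\mu_N^{\gl,0}}(f)$ and $\mathcal{E}^{\gl,0}(f)\le B\,\mathcal{E}^{\gl,\sigma}(f)$ for constants $A,B$ that are at most polynomial in $N$, because the single-site conditional measures $Q_x$ differ only through the local tilt $e^{2\sigma/N}$, which is $1+O(1/N)$. This yields $T_{\Rel}^N(\gl,\sigma)\le AB\cdot T_{\Rel}^N(\gl,0)$, hence polynomial.

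The main obstacle will be that the naive global bound on the Radon--Nikodym derivative is $e^{\sigma N}$, which is far too large, so the comparison must be localized to the high-probability set $\{L_{\max}(\xi)\le C\log N\}$ and the contribution of the exceptional set must be absorbed. This requires a restricted-space argument: one shows the relaxation time of the chain conditioned to (or reflected inside) this set is polynomial, and then controls the return time / escape probability from the exceptional set using the equilibrium estimates of Section \ref{sec:partitionfunction}. A clean way to organize this is via a censoring or block-dynamics decomposition analogous to \cite{yang2019cutoff}, treating the unpinned excursions as effectively confined to logarithmic height and applying the comparison only there; alternatively one can invoke the path/flow method directly, routing flows from an arbitrary $\xi$ to a reference configuration through the pinned state, paying only a polynomial congestion cost because each excursion is short. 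I expect the delicate bookkeeping to lie in the case $G(\sigma)=F(\gl)$ with $E(\gl,\sigma)=0$, where excursions can occasionally reach macroscopic size with probability $\sim N^{-1/2}$; here one needs the precise partition-function asymptotics \eqref{equalcase} to show that such events, while not exponentially rare, are rare enough and ``short-lived'' enough not to affect the polynomial order of the relaxation time.
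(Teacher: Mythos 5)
Your proposal correctly identifies the heuristic (monotone effective potential, hence no bottleneck) and the main obstruction to a naive comparison with the $\sigma=0$ chain (the Radon--Nikodym derivative $e^{\sigma A(\xi)/N}$ is of order $e^{\sigma N}$ in the worst case). However, the fix you propose does not close the argument, and the remaining gap is of essentially the same difficulty as the original problem. Restricting the comparison to $B_N=\{L_{\max}(\xi)\le C\log N\}$ and invoking a decomposition theorem still requires a polynomial lower bound on the spectral gap of the chain \emph{restricted to} $B_N^{\complement}$ (the decomposition bound of Proposition \ref{th:generaljerrum} involves $\min_i \Gap_i$ over \emph{all} blocks, and the exponentially small equilibrium weight of $B_N^{\complement}$ does not exempt it). That restricted chain lives on configurations with excursions of all sizes between $C\log N$ and $N$, precisely the configurations on which the area tilt is not perturbative; bounding its gap is not a consequence of the pure-pinning result and is not addressed by "return time / escape probability" estimates, which control hitting times rather than the restricted spectral gap. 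The alternative routes you sketch (censoring, or canonical paths on all of $\Omega_N$ routed through the pinned state) are plausible in spirit — monotonicity of $\beta\mapsto \beta G(\beta\sigma)+(1-\beta)F(\gl)$ when $E(\gl,\sigma)=0$ does mean the deflation of an excursion climbs the weight monotonically — but the congestion bookkeeping for paths between \emph{arbitrary} pairs of configurations on the full space is a substantial construction that you do not carry out, and it is not the route the paper takes.

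A secondary point: your worry about the case $G(\sigma)=F(\gl)$ with $E(\gl,\sigma)=0$ is vacuous. Since $G'(\sigma)>0$ for $\sigma>0$, the condition $E(\gl,\sigma)=0$, i.e.\ $G(\sigma)+\sigma G'(\sigma)\le F(\gl)$, forces $G(\sigma)<F(\gl)$ strictly; the tie on the static critical line always lies in the slow-mixing phase. For reference, the paper's proof has a different architecture: it runs an induction on $N$ via the decomposition of $\Omega_N$ according to $(L(\xi),R(\xi))$ (with transitions of $L,R$ censored to nearest-neighbour moves so that the exit rate $\bar\gamma$ stays bounded). The restricted chains factorize into a central WASEP block, whose gap is of order $N^{-2}$ by a Wilson-type eigenfunction argument, and two smaller copies of the same chain handled by the induction hypothesis; the reduced chain on $\Upsilon_N$ has a uniform drift toward the pinned corner precisely because $E(\gl,\sigma)=0$, yielding a Cheeger-constant bound. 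This is what produces the $N^{\tilde C(\gl)}$ bound without ever comparing to the $\sigma=0$ measure globally.
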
 

 The second result of this section shows that when the activation energy of the system $E(\gl,\sigma)$ is positive the lower bound proved
in the previous section (that is, Proposition \ref{th:lowbdRel}) is sharp up to polynomial correction.

\begin{proposition}\label{th:upreltimebottleneck}
If $E(\gl,\gs)>0$, for all $N \ge 1$ we have
\begin{equation}
T_{\Rel}^N(\gl, \gs) \le  C(\gl, \gs) N^{\tilde C(\gl, \gs)} \exp(2N E(\gl, \gs)).
\end{equation}
\end{proposition}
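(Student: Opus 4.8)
The plan is to establish the matching upper bound by combining a decomposition of the state space along the value of $L_{\max}$ with the standard variational machinery for bounding relaxation times in the presence of a single bottleneck. Recall the partition $\gO_N=\cE_N^1\sqcup\cE_N^2$ from \eqref{partitionintwo}, cut at the critical unpinned fraction $\beta^*$. The key conceptual point, already visible in Proposition \ref{estimates}, is that this is the \emph{only} bottleneck: within each $\cE_N^i$ the effective energy $\beta\mapsto-\beta G(\beta\sigma)-(1-\beta)F(\gl)$ is monotone, so the system should equilibrate polynomially fast inside each well. To make this rigorous I would proceed in three stages. First, prove a ``restricted fast mixing'' statement: the chain conditioned to stay in $\cE_N^i$ (i.e.\ with Dirichlet form restricted to moves not crossing the $L_{\max}=\beta^*N$ threshold) has relaxation time bounded by $C(\gl,\sigma)N^{\tilde C}$. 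Second, control the bottleneck crossing via a canonical-paths or flow argument, showing the cost of the crossing is $e^{2NE(\gl,\sigma)}$ up to polynomial factors. Third, glue these together with a decomposition-of-variance / block argument (as in \cite[Theorem 7.4]{LPWMCMT} or the two-set decomposition lemmas) to conclude
\[
T_{\Rel}^N(\gl,\gs)\le C(\gl,\gs)N^{\tilde C(\gl,\gs)}e^{2NE(\gl,\gs)}.
\]

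For the restricted fast-mixing step, the natural route is induction on $N$ together with chain reduction: the heat-bath dynamics on $\gO_N$ can be compared, via the standard comparison-of-Dirichlet-forms technique, to a dynamics where one resamples whole excursions or whole pinned/unpinned blocks. When $G(\sigma)\le F(\gl)$ and we restrict to $\cE_N^1$ (the well of configurations with all excursions $\le\beta^*N$), the conditional law is essentially that of the pinned random-walk pinning model, whose dynamics is known to mix in polynomial time \cite{caputo2008approach,yang2019cutoff}; the area tilt is a bounded multiplicative perturbation ($e^{O(1)}$ on the relevant scales) and is absorbed by a comparison argument. When $G(\sigma)>F(\gl)$ and we restrict to $\cE_N^2$, the conditional law concentrates on configurations with one macroscopic unpinned bubble of size $\ge(1-\gep)N$ (Proposition \ref{th:upasymptareadom}); conditioning on the endpoints $L(\xi),R(\xi)$ of that bubble reduces the bulk to the WASEP-type measure $\mu_{\bar N}^{0,\bar\sigma}$ whose dynamics mixes polynomially \cite{LabbeWABridge,labbe2016cutoff,labbe2018mixing}, while the two short pinned segments at the ends are again handled by the pinning-dynamics estimate; the fluctuation of the bubble endpoints is a one-dimensional birth-death chain which contributes only a polynomial factor. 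This is exactly the content one expects from Proposition \ref{th:upreltimestable}, which should be proved first and then reused here for the two restricted chains.

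For the bottleneck-crossing step, I would use the representation in terms of the renewal structure \eqref{excupath}–\eqref{renewexpress} already exploited in Section \ref{sec:partitionfunction}. The idea is to build a unit flow (in the sense of the canonical-path bound for the spectral gap) from $\cE_N^1$ to $\cE_N^2$ that routes mass through configurations with largest excursion of length exactly $\lceil\beta^*N\rceil$, i.e.\ through $\partial\cE_N^1$. The congestion of any edge is controlled by $\bZ(\partial\cE_N^1)^{-1}$ times polynomial combinatorial factors, and Proposition \ref{estimates} gives
\[
\frac{Z_N(\gl,\sigma)}{\bZ(\partial\cE_N^1)}\ \asymp\ \sqrt{N}\,e^{2N\left(G(\sigma)\wedge F(\gl)-\beta^*G(\beta^*\sigma)-(1-\beta^*)F(\gl)\right)}=\sqrt{N}\,e^{2NE(\gl,\sigma)},
\]
which is precisely the exponential rate we want; a standard interpolation/restriction lemma (decomposing a test function $f$ as $f=\mu_N(f\mid\cE_N^1)\ind_{\cE_N^1}+\mu_N(f\mid\cE_N^2)\ind_{\cE_N^2}+(\text{fluctuation})$ and bounding the three pieces by the two restricted relaxation times and the bottleneck cost respectively) then yields the claimed bound.

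The main obstacle is the first step — proving that the \emph{restricted} chains mix in polynomial time, uniformly in $\sigma$ in the relevant parameter window. The difficulty is that the restriction $\{L_{\max}\le\beta^*N\}$ (resp.\ $\ge\beta^*N$) is not a simple product or monotone event, so one cannot directly cite off-the-shelf results for the pinning dynamics or WASEP; one must show that the restriction does not create additional, smaller-scale bottlenecks. I expect this to require a delicate induction on $N$ combined with a careful choice of auxiliary ``block'' dynamics and a chain of comparison inequalities, checking at each scale that the area tilt, being of order $e^{O(1)}$ over any sub-window of sub-macroscopic size, never competes with the pinning on that scale. Keeping the polynomial exponent $\tilde C(\gl,\gs)$ finite (rather than accumulating an $N^{\log N}$ through the induction) is the technical crux, and is presumably why the paper flags Section \ref{sec:upbrel} as ``the most technical part.''
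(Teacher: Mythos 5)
Your overall architecture coincides with the paper's: the two-well partition $\gO_N=\cE_N^1\sqcup\cE_N^2$, polynomial bounds on the two restricted spectral gaps, a bottleneck-crossing rate of order $e^{-2NE(\gl,\gs)}$ extracted from the estimates on $\bZ(\partial\cE_N^1)$, $\bZ(\cE_N^1)$, $\bZ(\cE_N^2)$ in Proposition \ref{estimates}, and a gluing step via the Jerrum--Son--Tetali--Vigoda-type decomposition (Proposition \ref{th:generaljerrum}; note that \cite[Theorem 7.4]{LPWMCMT} is the bottleneck \emph{lower} bound and is not what is used here). The crossing step is even simpler than your canonical-flow proposal: since the reduced chain has only two states, its gap is exactly $\bar r(1,2)/\bar\pi(2)$, which is bounded below directly by $\mu_N(\partial\cE_N^1)/\bigl(\mu_N(\cE_N^1)\mu_N(\cE_N^2)\bigr)$ times the minimal flip rate, i.e.\ by the same quantity already computed for the lower bound.

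The genuine gap is the step you yourself flag as ``the main obstacle'': the polynomial relaxation of the two \emph{restricted} chains is asserted, not proved, and your sketch for $\cE_N^2$ would not close. Conditioning on the endpoints of a single macroscopic bubble does not control the restricted gap, because $\cE_N^2=\{L_{\max}>\beta^*N\}$ also contains configurations with several excursions of length in $(\beta^*N,(1-\gep)N)$; these have exponentially small equilibrium weight, but small-measure regions can still throttle the \emph{restricted} chain, whose gap sees all of $\cE_N^2$. The paper resolves this by partitioning $\cE_N^2$ according to the positions of \emph{all} excursions longer than $\beta^*N$ (the index set $\Psi_N$, of which there are at most $(\beta^*)^{-1}$ per configuration), showing each cell is a product of WASEP blocks and pinned blocks, and then bounding the gap of the reduced chain on $\Psi_N$ by a canonical-path argument whose congestion is controlled by the monotonicity of $\beta\mapsto\beta G(\beta\sigma)-\beta F(\gl)$ on $[\beta^*,1]$. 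For $\cE_N^1$ the paper runs an induction on dyadic ranges $\sigma\le 2^k\sigma_0$, reusing the $L(\xi),R(\xi)$ reduction of Proposition \ref{th:maininductionstep} with the crucial scale-invariance $N\beta^*(\sigma)=x\beta^*(\sigma x/N)$ so that the constraint $L_{\max}\le\beta^*N$ passes to the sub-blocks; this is what keeps the exponent $\tilde C$ finite. Without these two constructions the proposal remains a correct plan rather than a proof.
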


 \subsection{The chain decomposition strategy} \label{sec:decompstrat}
 In order to  obtain upper bounds on the relaxation times $T_{\Rel}^N(\gl,\gs)$, 
 we are going to rely repeatedly on a decomposition technique developed in \cite{jerrum2004elementary}.
 Let us state here this decomposition in a general framework. We consider a generic continuous-time 
 reversible and irreducible Markov chain on a finite state space $S$, with generator $\cL$ given by
 \begin{equation}
( \cL \varphi)(x) \colonequals \sum_{y \in \gO}  r(x,y) \left( \varphi (y)-\varphi (x) \right),
\end{equation}
where $r$ are the transition rates.
 We let $\pi$ and $\Gap$ denote respectively the equilibrium measure and the spectral gap associated with this Markov chain.
 
 \medskip
 
 We consider also $(S_i)_{i\in I}$ a partition of $S$ indexed by an arbitrary index set $I$ and let $\cL_i$ to be the generator of the \textit{restricted chain} with state space  $S_i$ (it corresponds to the original chain conditioned to remain in $S_i$ at all time). It is  defined by
  \begin{equation}
( \cL_i f) (x) \colonequals \sum_{y \in S_i} r(x,y) \left(f(y)-f(x) \right).
 \end{equation}
 for $f: S_i\to \bbR$ and $x\in S_i$.
 We let $\Gap_i$ denote the spectral gap associated with $\cL_i$. Note that the probability measure $\pi_i$ defined by $\pi_i(A)= \pi(A)/\pi(S_i)$ for $A\subset S_i$ is reversible for $\cL_i$. We let $\Gap_i$ denote the spectral gap of $\cL_i$.
Finally we  define  the \textsl{reduced chain} on $I$ with generator $\bar \cL$ given by (for $\phi: I \to \bbR $)
\begin{equation}
(\bar \cL \phi)(i) \colonequals \sum_{j \in I} \bar r(i,j) \left( \varphi (j)-\varphi (i) \right),  \ \text{ where } \  \bar r (i,j) \colonequals \sum_{x \in S_i, y \in S_j} \pi_i(x ) r(x,y), \mbox{ } i,j \in I.
\end{equation}
 The  probability  $\bar \pi (i) = \pi(S_i)$ for all $i \in I$ is reversible for $\bar \cL$. We let $\overline \Gap$ denote its spectral gap. Note that the reduced chain does not correspond to the projection of the original chain on $I$ (which is in general a non-Markovian process) but to the projection of a modified process that would be resampled using the probability $\pi_i$  between any two consecutive steps.
 Finally we let 
\begin{equation}\label{defgamma}
 \bar \gamma \colonequals \max_{i \in I} \max_{x \in S_i} \sum_{y \in S\setminus S_i}r(x,y)
 \end{equation}
denote the maximal exit rate from one of the $S_i$s.
The following proposition is the continuous time  adaptation of \cite[Theorem 1]{jerrum2004elementary}.
How it allows to control the spectral gap of $\cL$ is one can control that of the reduced chain and those of the restricted chains.

 \begin{proposition}{\cite[Proposition 2.1]{caputo2012polymer}} 
 \label{th:generaljerrum} 
 With the notation introduced above 
 we have
 \begin{equation}
 \Gap \ge \min \left( \frac{\overline \Gap}{3}, \frac{\overline \Gap \min_{i \in I} \Gap_i}{\overline \Gap+ 3  \bar \gamma} \right) .
 \end{equation}

 \end{proposition}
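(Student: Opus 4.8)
The plan is to run the continuous-time version of the ``decomposition of variance'' argument of \cite{jerrum2004elementary}. By the variational characterization of the spectral gap it suffices to establish, for every $f\colon S\to\bbR$, a Poincar\'e inequality $\Var_\pi(f)\le \Gap_\star^{-1}\,\cE(f)$, where $\Gap_\star$ denotes the right-hand side of the claimed inequality, $\cE(f)\colonequals\tfrac12\sum_{x,y\in S}\pi(x)r(x,y)(f(x)-f(y))^2$ is the Dirichlet form of $\cL$, and $\cE_i$, $\bar\cE$ denote those of $\cL_i$ and $\bar\cL$. First I would set $\bar f(i)\colonequals\pi_i(f)$ and apply the law of total variance to split
\begin{equation*}
\Var_\pi(f)=W+B,\qquad W\colonequals\sum_{i\in I}\pi(S_i)\Var_{\pi_i}(f),\quad B\colonequals\Var_{\bar\pi}(\bar f).
\end{equation*}
The crucial bookkeeping remark is that the global Dirichlet form splits as $\cE(f)=\cE^{\mathrm{in}}(f)+\cE^{\mathrm{cr}}(f)$, where $\cE^{\mathrm{in}}(f)=\sum_{i\in I}\pi(S_i)\cE_i(f)$ collects the transitions internal to a block and $\cE^{\mathrm{cr}}(f)$ collects the transitions between distinct blocks; these are \emph{disjoint} parts of $\cE(f)$, which is what will eventually make the constants come out sharp.

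Next I would estimate $W$ and $B$ separately. For $W$, applying the Poincar\'e inequality for each restricted chain $\cL_i$ and summing over $i$ gives $W\le (\min_{i\in I}\Gap_i)^{-1}\cE^{\mathrm{in}}(f)$. For $B$, the Poincar\'e inequality for the reduced chain yields $B\le\overline\Gap^{-1}\bar\cE(\bar f)$, so the real work is to control $\bar\cE(\bar f)$ in terms of the global data. Here I would use $\bar\pi(i)\bar r(i,j)=\sum_{x\in S_i,\,y\in S_j}\pi(x)r(x,y)$ together with the elementary bound $(\bar f(i)-\bar f(j))^2\le 3\big[(\bar f(i)-f(x))^2+(f(x)-f(y))^2+(f(y)-\bar f(j))^2\big]$ inside every pair of blocks: the middle term reproduces $3\,\cE^{\mathrm{cr}}(f)$, while each of the two ``boundary'' terms, after symmetrizing one of them via reversibility $\pi(x)r(x,y)=\pi(y)r(y,x)$ and invoking the definition \eqref{defgamma} of $\bar\gamma$ (which bounds $\sum_{y\notin S_i}r(x,y)$), is at most $\tfrac32\bar\gamma\,W$. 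Altogether this gives $\bar\cE(\bar f)\le 3\,\cE^{\mathrm{cr}}(f)+3\bar\gamma\,W$.

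Finally I would combine, using $W\le(\min_{i\in I}\Gap_i)^{-1}\cE^{\mathrm{in}}(f)$ once more, to get
\begin{equation*}
\Var_\pi(f)=W+B\le\Big(\tfrac{1}{\min_{i}\Gap_i}+\tfrac{3\bar\gamma}{\overline\Gap\min_{i}\Gap_i}\Big)\cE^{\mathrm{in}}(f)+\tfrac{3}{\overline\Gap}\,\cE^{\mathrm{cr}}(f)\le\Gap_\star^{-1}\big(\cE^{\mathrm{in}}(f)+\cE^{\mathrm{cr}}(f)\big)=\Gap_\star^{-1}\cE(f),
\end{equation*}
because $\Gap_\star^{-1}$ is, by definition, the larger of the two coefficients $\tfrac{\overline\Gap+3\bar\gamma}{\overline\Gap\min_i\Gap_i}$ and $\tfrac{3}{\overline\Gap}$. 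The step I expect to be the main obstacle --- and the one where the exact constants get decided --- is the estimate of $\bar\cE(\bar f)$: it is tempting there to bound both $\cE^{\mathrm{in}}$ and $\cE^{\mathrm{cr}}$ crudely by $\cE$, which loses a spurious factor $2$, so one must keep the intra-block and inter-block parts of the Dirichlet form separate throughout, and apply reversibility correctly to the mixed boundary term. The hypotheses of reversibility and irreducibility of $\cL$ (which pass to every $\cL_i$ and to $\bar\cL$) are exactly what make the three Poincar\'e inequalities used above available.
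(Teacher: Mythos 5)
Your proof is correct. The paper does not prove this proposition itself --- it is quoted from \cite{caputo2012polymer} (a continuous-time adaptation of \cite{jerrum2004elementary}) --- and your decomposition-of-variance argument, with the careful separation of $\cE^{\mathrm{in}}$ and $\cE^{\mathrm{cr}}$, the three-term Cauchy--Schwarz bound, and the symmetrization of the boundary term via reversibility to invoke $\bar\gamma$, is exactly the standard argument behind the cited result; the constants come out matching the statement.
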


\subsection{The induction strategy}

 The main idea of the proof here is to use a decomposition strategy,
where the partition of the states is done according to the position of $L(\xi)$ and $R(\xi)$
 whose definition (given in \eqref{leftrightcontacts}) we recall 
 \begin{equation}
\begin{gathered}
L(\xi) \colonequals \sup \big\{k\leq N: \xi_k=0  \big\},\\
R(\xi) \colonequals \inf \big\{k\geq N: \xi_k=0 \big\}.
\end{gathered}
\end{equation}
We want to apply Proposition \ref{th:generaljerrum} with the partition of $\gO_N$ given by
 $\gO_N= \sqcup_{(x,y)\in \Upsilon_N} \gO_{(x,y)}$
\begin{equation}\label{inductspace}
\begin{split}
\Upsilon_N&:=\{ (x,y) \ : x,y\in \lint 0,N\rint, 2x\le N \le 2y \},\\
\gO_{(x,y)}&:=\{ \xi\in \gO_N \ :  \ L(\xi)=2x \text{ and } R(\xi)=2y \}.
\end{split}
\end{equation}
We need to estimate the spectral gap for the  reduced chain on $\Upsilon_N$ and for each of the restricted chain on $\gO_{(x,y)}$. Roughly speaking, the idea is that when $G(\sigma)+\sigma G'(\sigma)<F(\gl)$, both $L(\xi)$ and $R(\xi)$ display a uniform drift towards the center and this makes the spectral gap bounded away from below (like for a random walk with drift). The very sharp equilibrium estimates proved in Section \ref{sec:partitionfunction} allows us to make this rigorous in Proposition \ref{th:gapreducedstable}.
Now the chain restricted to $\gO_{(x,y)}$ is in fact a product chain since the  respective restrictions of $\eta_t$ to the intervals $\lint 0, 2x \rint$, $\lint 2x, 2y\rint$ and $\lint 2y, 2N \rint$ are independent Markov chains. The spectral gap $\Gap_{(x,y)}$ of the restricted chain is thus given by the minimum of these three chains.

\medskip

The restriction the interval $\lint 2x, 2y \rint$ is a variant of the weakly asymmetric exclusion process whose mixing properties have been studied in details in \cite{labbe2018mixing}. Its spectral gap is well understood and scales like $(y-x+1)^{-2}$ (see Proposition \ref{th:gapWSEPconstraint} below).
The restrictions to $\lint 0, 2x \rint$ and $\lint 2y, 2N \rint$ on the other hand are simply the same as the original chain but on a smaller interval. This forces us to proceed by induction.
Our main task is going to be the proof of the following statement.
We let $\sigma_0(\gl)$ be such that  
\begin{equation}\label{defsig0}
G(\sigma_0)+\sigma_0 G'(\sigma_0)=F(\gl).
\end{equation}

\begin{proposition}\label{th:maininductionstep}
For any $\sigma_1<\sigma_0$ there exists a constant $c(\gl,\sigma_1)$ such that for any $\sigma\le \sigma_1$ and any $N\ge 2$  we have 

\begin{equation}\label{lestima}
\Gap_{N}(\gl, \sigma)\ge c (\gl,\sigma_1) \min_{n\le N/2}  \left( \Gap_{n}\left(\gl, \frac{n\sigma}{N}\right), (N/2)^{-2}\right).
\end{equation}
We also have for all $\sigma\le \sigma_0$
\begin{equation}\label{lestima2}
\Gap_{N}(\gl, \sigma)\ge c(\gl) N^{-4} \min_{n\le N/2}  \left( \Gap_{n}\left(\gl,\frac{n\sigma}{N}\right), (N/2)^{-2}\right).
\end{equation}

\end{proposition}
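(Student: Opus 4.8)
The statement is the ``one step'' of the induction announced in the text, so its proof should be a single application of the chain-decomposition estimate of Proposition~\ref{th:generaljerrum} to the partition $\gO_N=\sqcup_{(x,y)\in\Upsilon_N}\gO_{(x,y)}$ from \eqref{inductspace}, in which a configuration is classified by its last zero before $N$ and its first zero after $N$. To run the decomposition we need three inputs: a lower bound on the spectral gap $\Gap_{(x,y)}$ of each restricted chain, an upper bound on the maximal exit rate $\bar\gamma$, and — the crux — a lower bound on the spectral gap $\overline\Gap$ of the reduced chain on $\Upsilon_N$.

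The restricted chains are easy: conditioned on $\{L=2x,\ R=2y\}$ the path splits into three pieces $(\xi_k)_{0\le k\le 2x}$, $(\xi_k)_{2x\le k\le 2y}$, $(\xi_k)_{2y\le k\le 2N}$ that evolve as independent Markov chains, so $\Gap_{(x,y)}$ is the minimum of their three gaps. After relabelling, the left piece is exactly the original corner-flip dynamics on $2x$ steps with rescaled area parameter, i.e.\ has the law of $\mu^{\gl,x\sigma/N}_x$, and the right one that of $\mu^{\gl,(N-y)\sigma/N}_{N-y}$; since $2x\le N\le 2y$ both indices are $\le N/2$, so these two gaps are $\ge\min_{n\le N/2}\Gap_n(\gl,n\sigma/N)$. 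The middle piece is the weakly asymmetric corner-flip chain on an interval of length $2(y-x)\le 2N$ conditioned to stay strictly positive in its interior; its gap is $\ge c(\gl)(y-x+1)^{-2}\ge c'(\gl)(N/2)^{-2}$, which is a separate lemma (Proposition~\ref{th:gapWSEPconstraint}) obtained by comparison with \cite{labbe2018mixing}. Altogether $\min_{(x,y)\in\Upsilon_N}\Gap_{(x,y)}\ge c(\gl)\min_{n\le N/2}\big(\Gap_n(\gl,n\sigma/N),(N/2)^{-2}\big)$. As for $\bar\gamma$: a move leaving $\gO_{(x,y)}$ either destroys the contact at $2x$ or at $2y$ (two specific corner flips of rate $\le1$) or creates a contact strictly between $2x$ and $2y$ (one flip per site, rate $\le1$), so $\bar\gamma\le 2N$, and for the analysis of $(L,R)$ it is the first kind of move that matters.

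The heart of the argument is the bound on $\overline\Gap$. Its transition rates are, by definition, conditional averages of the flip rates, hence are expressed through ratios $Z_{x\pm1}(0,\cdot)/Z_{x}(0,\cdot)$ and their analogues near $2y$, together with the factor carried by the single middle excursion. Using the \emph{uniform} (in the rescaled area parameter) partition-function estimates of Proposition~\ref{th:zerocase}, the renewal computation behind Lemma~\ref{th:renewestimate}, and the convexity of $u\mapsto uG(\sigma u)+(1-u)F(\gl)$, one checks that as long as $G(\sigma)+\sigma G'(\sigma)<F(\gl)$ — equivalently $\sigma<\sigma_0$, see \eqref{defsig0} — moving a contact one step inward multiplies the equilibrium weight by a factor bounded away from $1$ by a constant depending only on $(\gl,\sigma_1)$ when $\sigma\le\sigma_1$. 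Comparing the resulting drift-towards-centre chain on $\Upsilon_N$ with an explicitly solvable birth-and-death chain — and controlling the mild two-dimensional coupling between $L$ and $R$ coming from the middle excursion by a canonical-path/flow argument — gives $\overline\Gap\ge c(\gl,\sigma_1)>0$; plugging this, $\min_{(x,y)}\Gap_{(x,y)}$, and $\bar\gamma$ into Proposition~\ref{th:generaljerrum} yields \eqref{lestima}. At the critical value $\sigma=\sigma_0$ the inward drift degenerates to zero near the centre, so only $\overline\Gap\ge c(\gl)N^{-4}$ survives (the extra power coming from the vanishing drift together with the $\bar\gamma\le2N$ bound), and by monotonicity in $\sigma$ this persists for all $\sigma\le\sigma_0$, giving \eqref{lestima2}.

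\textbf{Expected main obstacle.} The delicate step is the lower bound on $\overline\Gap$: quantifying the effective inward drift of $(L,R)$ uniformly over all of $\Upsilon_N$, and controlling the residual dependence between the two contact points introduced by the middle excursion's partition function. This is exactly where the sharp non-asymptotic bounds of Section~\ref{sec:partitionfunction} — stated, as in Proposition~\ref{th:zerocase}, in a form uniform in the area parameter — are indispensable, and it is also where the dichotomy between the constant gap of \eqref{lestima} (strictly subcritical) and the polynomially small gap of \eqref{lestima2} (critical) originates.
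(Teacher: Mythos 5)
Your overall architecture matches the paper's: partition $\gO_N$ by $(L(\xi),R(\xi))$, exploit the product structure of the restricted chains (two pinned pieces of length $\le N/2$ plus one strictly positive middle excursion handled by Proposition~\ref{th:gapWSEPconstraint}), and bound the reduced chain via the inward drift of the contact points, which is uniform for $\sigma\le\sigma_1<\sigma_0$ and degenerates at $\sigma_0$.

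However, there is one genuine gap that breaks the stated conclusion. You apply Proposition~\ref{th:generaljerrum} directly to the original chain with $\bar\gamma\le 2N$. Plugging $\overline\Gap\ge c(\gl,\sigma_1)$ and $\bar\gamma\le 2N$ into
\begin{equation*}
\Gap \ \ge\ \min\Bigl(\tfrac{\overline\Gap}{3},\ \tfrac{\overline\Gap\,\min_i\Gap_i}{\overline\Gap+3\bar\gamma}\Bigr)
\end{equation*}
only gives $\Gap_N\ge \frac{c}{N}\min_{(x,y)}\Gap_{(x,y)}$, i.e.\ \eqref{lestima} with an extra factor $N^{-1}$ that is \emph{not} in the statement. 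This is not a cosmetic loss: since the estimate is iterated over $\sim\log_2 N$ halvings of the system size in the proof of Proposition~\ref{th:upreltimestable}, the accumulated factor is $N^{-\log_2 N}=\exp(-c(\log N)^2)$, which destroys the polynomial bound the induction is designed to produce. The paper's fix is to first replace the dynamics by the modified chain $\cL^*_N$ with rates $r^*_N(\xi,\xi')=r_N(\xi,\xi')\ind_{\{|L(\xi)-L(\xi')|\le 2,\ |R(\xi)-R(\xi')|\le 2\}}$: this chain is still irreducible and reversible for $\mu_N^{\gl,\sigma}$ and has a \emph{smaller} spectral gap, so any lower bound for it transfers to the original chain, and for it at most four transitions leave a block $\gO_{(x,y)}$, whence $\bar\gamma\le 4$ and the $N$-independent constant in \eqref{lestima}. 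Your remark that ``it is the first kind of move that matters'' gestures at this but does not implement it; as written, your $\bar\gamma\le 2N$ bound is what enters Jerrum's inequality and the claimed conclusion does not follow. (Secondarily: the paper controls the reduced chain via a Cheeger-constant argument using the inclusion partial order on $\Upsilon_N$ and the geometric decay of the approximate weights $\bar p(x,y)$, rather than your birth--death comparison plus canonical paths; the nearest-neighbor restriction on $(L,R)$ is also what makes the reduced chain a genuinely local walk on $\Upsilon_N$, which that argument uses through \eqref{minrate}.)
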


\begin{proof}[Proof of Proposition \ref{th:upreltimestable} using Proposition \ref{th:maininductionstep}]

We start by setting (using the constant $c(\gl, \gs_0/2)$ given by Proposition \ref{th:maininductionstep})
 \begin{equation}\label{leconstanz}
 \tilde C(\gl):= 2 \vee \log_2\left( \frac 1 {c(\gl, \gs_0/2)}\right)+4.
 \end{equation}
We are going to prove by induction that for every $N\ge 2$ the property $\cU_N$  defined as 
\begin{equation}
\forall \gs \in [0, \gs_0/2], \quad \Gap_N(\gl, \gs) \ge N^{-\tilde C(\gl)+4}
\end{equation}
is satisfied.
When $N=2$, we can see that $\# \Omega_2=2$, and  $\Gap_2(\lambda, \sigma)=1$ for all $\sigma \in [0, \gs_1]$ using \eqref{defRelTime}.
Now given $N\ge 3$ and assuming that $\cU_n$  is valid for all $n\le N-1$, we want  
to prove  $\cU_N$.
     Therefore, by  \eqref{lestima} and the induction hypothesis,  we have
\begin{equation}\label{gapinduct}
\Gap_N(\gl, \gs) \ge c(\gl, \gs_0/2)\left(\frac{N}{2}\right)^{-\tilde C(\gl)+4} \ge N^{-\tilde C(\gl)+4},
\end{equation}
which concludes the induction proof.
 Now when $\sigma\in (\sigma_0/2,\sigma_0]$ we  apply \eqref{lestima2} to obtain 
 \begin{equation}\label{afterinduct}
  \Gap_N(\gl, \gs)\ge c (\gl) (N/2)^{-\tilde C(\gl)}
 \end{equation}
and this concludes our proof.

\end{proof}

\subsection{Proof of proposition \ref{th:maininductionstep}}

As discussed above the key point here is to apply Proposition \ref{th:generaljerrum}.
However, if we apply it directly the factor \eqref{defgamma} corresponding to the partition  $\gO_N= \sqcup_{(x,y)\in \Upsilon_N} \gO_{(x,y)}$ is much too large. More specifically it is of order $N$, and applying Proposition \ref{th:generaljerrum} directly would make us lose a factor $N$ in \eqref{lestima} which, after the induction, would turn into a factor $\exp((\log N)^2)$ in Proposition \ref{th:upreltimestable}. Hence we perform a small modification to the chain which is crucial to obtain a polynomial bound on the relaxation time.

\medskip

Our modification simply constrains $L(\xi)$ and $R(\xi)$ to make only nearest neighbor move.
 Recalling the definition of $r_N$ in \eqref{jumprate}, this corresponds to consider the Markov chain with generator 
 $$\cL^{*}_N(f)(\xi):=  \sum_{\xi'\in \gO_N}   r^*_N(\xi,\xi') (f(\xi')-f(\xi))$$
 where
 \begin{equation}\label{defstar}
  r^*_N(\xi,\xi'):=   r_N(\xi,\xi')\ind_{\{|L(\xi)-L(\xi')|\le 2 \text{ and } |R(\xi)-R(\xi')|\le 2\}}.
 \end{equation}
Note that $\cL^*_N$ is irreducible and  reversible with respect to the same measure $\mu^{\gl,\sigma}_N$
and thus for this reason has a smaller spectral gap than the original chain. 
Letting $\Gap^*_N$ be the spectral gap associated with this chain, we are going to prove that for $\sigma\le \sigma_1$
\begin{equation}
\Gap^*_{N}(\gl,\sigma)\ge c(\gl,\sigma_1) \min_{n\le N/2}  \left( \Gap_{n}\left(\gl,\frac{n\sigma}{N}\right), N^{-2}\right).
\end{equation}
and similarly for \eqref{lestima2}.

\medskip

We apply Proposition \ref{th:generaljerrum} for $\cL^*_N$ with the partition  $\gO_N= \sqcup_{(x,y)\in \Upsilon_N} \gO_{(x,y)}$. We let $\Gap_{(x,y)}(\gl,\sigma)$ and $\overline \Gap_N(\gl,\sigma)$ be the spectral gaps of the corresponding  restricted and reduced chains.
Now note that for our modified chain there are (at most) $4$ transitions that change the value of $L(\xi)$ or $R(\xi)$ and thus we have 
\begin{equation}
 \max_{(x,y)\in \Upsilon_N} \max_{\xi\in \gO_{(x,y)}} \sum_{\xi'\in \gO_N\setminus \gO_{(x,y)}} r^*_N(\xi,\xi')\le 4.
\end{equation}
As a consequence we have
\begin{equation}
 \Gap^*_N(\gl,\sigma)\ge \min \left( \frac{\overline \Gap_N(\gl,\sigma)}{3} , \frac{\overline \Gap_N(\gl,\sigma) \min_{\Upsilon_N}\Gap_{(x,y)}(\gl,\sigma)}{\overline \Gap_N(\gl,\sigma)+ 12} \right) .
\end{equation}
Now from the discussion of the previous section we have
\begin{equation}\label{gap:threeindep}
 \Gap_{(x,y)}(\gl,\sigma)= \Gap_x\left(\gl,\frac{x\sigma}{N}\right) \wedge \Gap_{N-y}\left(\gl,\frac{\sigma(N-y)}{N}\right)\wedge  \Gap_{y-x}\left(0,\frac{(y-x)\sigma}{N}\right),
\end{equation}
and as a consequence 
\begin{equation}\label{gap:threeseg}
 \min_{\Upsilon_N}\Gap_{(x,y)}(\gl,\sigma)\ge 
 \left( \min_{n\le N}  \Gap_{n}\left(0,\frac{n\sigma}{N}\right) \right) \wedge
\left(\min_{n\le N/2} \Gap_n\left(\gl,\frac{n\sigma}{N}\right)\right) .
\end{equation}
To conclude the proof we need to rely on two estimates.
The first one concerns the spectral gap of the unpinned dynamics, and can be obtained via a simple comparision with the unconstrained ASEP (see \cite[Theorem 1]{labbe2016cutoff} for the identification of the spectral gap in this case). The proof is included in Appendix \ref{appdix:WSEP} for completeness.

\begin{proposition}\label{th:gapWSEPconstraint}
For any $n \le N$ and for any $\sigma>0$ we have 
\begin{equation}
\Gap_{n}(0, \sigma)\geq  2\sin \left(\frac{\pi}{4N}\right)^2.
\end{equation}
\end{proposition}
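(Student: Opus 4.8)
The plan is to prove Proposition \ref{th:gapWSEPconstraint} by comparing the Dirichlet form of the constrained dynamics on $\widetilde\Omega_n$ (the ``unpinned'' WASEP-type chain appearing as the restriction of $\eta_t$ to a bulk interval of length $2n$, allowing $\xi_x<0$) with that of the unconstrained exclusion process on a segment, for which the spectral gap is known exactly. First I would recall that via the standard height-function/particle dictionary, the chain with generator associated to $Z_n(0,\cdot)$ is the weakly asymmetric simple exclusion process on $\lint 1,2n\rint$ with $n$ particles, with jump rates biased by the factor $e^{2\sigma/N}$ as in \eqref{jumprate}; the corner-flip at site $x$ corresponds to a particle/hole swap across the edge $\{x,x+1\}$. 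The key structural observation is that here $\sigma/N$ plays the role of the asymmetry parameter and it is \emph{nonnegative}, so the bias only helps: one can dominate the relaxation of the asymmetric chain by that of the symmetric one.

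The concrete step is the following. For the symmetric simple exclusion process (SSEP) on a segment of $L$ sites, the spectral gap equals $1-\cos(\pi/L)=2\sin^2(\pi/(2L))$ by the classical result of Diaconis--Shahshahani (or its graph-theoretic reformulation), and crucially this gap does not depend on the number of particles. Since our segment has $2n\le 2N$ sites, the SSEP gap there is at least $2\sin^2(\pi/(4N))$. Then I would invoke a comparison of Dirichlet forms: writing $\mathcal E^{\mathrm{asym}}$ and $\mathcal E^{\mathrm{sym}}$ for the Dirichlet forms of the $\sigma$-tilted and the symmetric chains (both on $\widetilde\Omega_n$, with equilibrium measures $\mu_n^{0,n\sigma/N}$ and uniform respectively), one checks that for each single corner-flip move the tilted rate $\frac{e^{2\sigma/N}}{1+e^{2\sigma/N}}$ or $\frac{1}{1+e^{2\sigma/N}}$ is bounded below by $\tfrac12 e^{-2\sigma/N}\cdot$ (the symmetric rate $\tfrac12$)-type comparison, while the Radon--Nikodym derivative between the two equilibrium measures is controlled since the area tilt is bounded on configurations of bounded size. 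A cleaner route, which I would actually follow and which is the content of the appendix reference, is to use the canonical-paths / congestion comparison between the constrained interface chain and the genuinely unconstrained ASEP on $\bbZ$ of the same length: the unconstrained ASEP has an even simpler spectral-gap lower bound, and the restriction to $\widetilde\Omega_n$ (fixing the endpoints) only increases the gap by the standard restriction principle when the restricted set is ``large'' — here one uses that $\eta$ restricted to the bulk interval, conditioned on its endpoints, is exactly this object.

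In more detail I would: (i) set up the particle representation and identify the generator; (ii) quote the exact SSEP spectral gap $2\sin^2(\pi/(2L))$ on $L$ sites, independent of particle number; (iii) use the elementary inequality, valid because $\sigma\ge 0$, that the asymmetric rates are each at least the corresponding symmetric rate $\tfrac12$ up to a multiplicative constant arbitrarily close to $1$ as $\sigma/N\to0$, combined with the fact that on configurations in $\widetilde\Omega_n$ with $n\le N$ the area satisfies $|A(\xi)|\le 2n^2\le 2N^2$ so $e^{(\sigma/N)A(\xi)}$ and hence the equilibrium-measure ratio is pinched in $[e^{-2\sigma N},e^{2\sigma N}]$ — but since the comparison constant would then depend badly on $N$, I would instead (iv) argue monotonically: show directly, by a coupling or by the Peres--Winkler-type monotone comparison for the height function, that increasing $\sigma$ from $0$ does not decrease the gap, so $\Gap_n(0,\sigma)\ge \Gap_n(0,0)$, and the latter is the SSEP gap $2\sin^2(\pi/(2\cdot 2n))\ge 2\sin^2(\pi/(4N))$ since $n\le N$. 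The main obstacle I anticipate is making the ``increasing $\sigma$ does not hurt the gap'' step fully rigorous: one cannot in general claim monotonicity of the spectral gap in a perturbation parameter, so one really has to go through either an explicit comparison of Dirichlet forms with a constant that is genuinely $1$ (not $N$-dependent) — which works here because the asymmetry is a rate bias, not a measure tilt on the relevant per-edge terms — or through the known exact formula for the WASEP gap on the segment from \cite{labbe2016cutoff}, which identifies $\Gap_n(0,\sigma)$ and from which the bound $2\sin^2(\pi/(4N))$ follows by elementary trigonometric estimates once one observes $n\le N$. I would present the proof via that last route, deferring the routine trigonometric verification to the appendix as the paper indicates.
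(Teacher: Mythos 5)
Your proposal misses the actual difficulty of this proposition, which is the wall. The chain whose gap is $\Gap_n(0,\sigma)$ is \emph{not} the free WASEP on $\widetilde\Omega_n$: with $\lambda=0$ the measure is supported on strictly positive excursions, and (as the paper notes at the start of Appendix A) $\Gap_n(0,\sigma)=\Gap_{n-1}(1,\sigma\tfrac{n-1}{n})$, i.e.\ the object to control is the corner-flip dynamics on \emph{nonnegative} paths, where the down-flip at level $1$ is forbidden. Each of the three routes you sketch fails to handle this constraint. (i) The exact gap formula of \cite{labbe2016cutoff} and the SSEP gap $2\sin^2(\pi/(2L))$ are for the unconstrained chain; there is no ``standard restriction principle'' asserting that censoring/conditioning a reversible chain to a sublattice of its state space cannot decrease the gap, and the constrained chain is not obtained from the free one by a Dirichlet-form restriction that would give such a comparison for free. (ii) Monotonicity of the gap in $\sigma$ is, as you yourself concede, not something you can claim without proof, and it is not what the paper uses. (iii) The direct Dirichlet-form comparison is killed by the $e^{\pm c\sigma N}$ ratio of equilibrium weights, again as you note.

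The paper's proof instead runs Wilson's method adapted to the wall: with $f_\xi(x)=(q/p)^{\xi_x/2}$ and the test function $h_n(\xi)=-\sum_x f_\xi(x)\sin(\pi x/2n)$, the generator computation \eqref{generator:WSEPconstraint} produces, besides the discrete-Laplacian term, an extra term $\Psi(\xi)$ supported on configurations with $\xi_{x-1}=\xi_{x+1}=0$ — this is exactly the contribution of the wall. The key observation is that $\Psi$ is \emph{antitone} in $\xi$ for the natural partial order, so for $\xi\le\xi'$ one gets $(\cL h_n)(\xi')-(\cL h_n)(\xi)\le -\kappa\,(h_n(\xi')-h_n(\xi))$ with $\kappa=4\sqrt{pq}\sin^2(\pi/4n)+(\sqrt p-\sqrt q)^2\ge 2\sin^2(\pi/4n)$; combining with the monotone coupling and \cite[Proposition 3]{wilson2004mixing} yields the gap bound. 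If you want to salvage your approach, you would need to supply precisely this kind of argument showing that the wall term has a definite sign compatible with monotonicity; without it, the proposal does not prove the statement.
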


The second one concerns the reduced chain. This chain informally can be thought as describing the evolution of a large unpinned zone present in the middle of the system.
As remarked in Section \ref{heuristicz}, when $E(\gl,\sigma)=0$, the corresponding effective potential does not display several local minima, and thus avoids any bottlenecking. Combining this fact with the relatively simple geometry of $\Upsilon_N$ we obtain the following estimates.

\begin{proposition}\label{th:gapreducedstable}
 We recall the definition of $\gs_0$ in \eqref{defsig0}. For $\gs_1< \gs_0$,
There exists a constant $C(\gl,\sigma_1)$ such that for every $N$, every $\sigma\in[0,\sigma_1]$
 \begin{equation}
 \overline\Gap_N(\gl,\sigma) \ge C(\gl,\sigma_1).
 \end{equation}
  Also there exists an constant $C(\gl)$ such that for all $\sigma\le \sigma_0$
  \begin{equation}\label{notopti}
    \overline\Gap_N(\gl,\sigma) \ge C(\gl) N^{-4}.
  \end{equation}
  
\end{proposition}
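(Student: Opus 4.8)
The plan is to apply a canonical-path (or, more simply, a one-dimensional birth-and-death comparison) argument to the reduced chain on the index set $\Upsilon_N$. Recall that the reduced chain lives on $\Upsilon_N=\{(x,y): 2x\le N\le 2y\}$ and its reversible measure is $\bar\pi(x,y)=\mu_N(\gO_{(x,y)})$, while its rates $\bar r((x,y),(x',y'))$ are the $\pi_{(x,y)}$-averaged exit rates. Because we have replaced $\cL_N$ by $\cL^*_N$, only nearest-neighbour moves in $(x,y)$ are allowed, so the reduced chain is a nearest-neighbour walk on the two-dimensional lattice region $\Upsilon_N$ (with the diagonal boundaries $2x=N$ and $2y=N$). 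The first step is to record the key monotonicity input: for $\sigma<\sigma_0$ the effective potential $V(\beta)=-\beta G(\beta\sigma)-(1-\beta)F(\gl)$ is \emph{strictly decreasing} in $\beta$ on $[0,1]$ (equivalently $G(u\sigma)+u\sigma G'(u\sigma)<F(\gl)$ for all $u\in[0,1]$, since $G(u\sigma)+u\sigma G'(u\sigma)$ is increasing in $u$ and equals $F(\gl)$ at $u=\sigma/\sigma_0\cdot\sigma_0$ only in the limiting case). This translates, via the partition-function estimates of Section \ref{sec:partitionfunction} (in particular Proposition \ref{th:zerocase} applied to each of the three independent blocks $\lint 0,2x\rint$, $\lint 2x,2y\rint$, $\lint 2y,2N\rint$, together with \eqref{asymppinning}), into the statement that there is a constant $\rho=\rho(\gl,\sigma_1)<1$ and $C=C(\gl,\sigma_1)$ such that, whenever $(x',y')$ is a neighbour of $(x,y)$ in $\Upsilon_N$ obtained by moving one of the two coordinates \emph{away} from the centre (i.e.\ enlarging the unpinned block), one has $\bar\pi(x',y')/\bar\pi(x,y)\le C\rho$, and in the reverse direction the ratio is bounded above by $C$. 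In other words $\bar\pi$ has uniform geometric decay away from the "fully pinned" corner region $\{x,y\approx N/2\}$, with the exponential rate controlled uniformly in $N$ and in $\sigma\le\sigma_1$.

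The second step is to turn this geometric decay into a spectral-gap lower bound. I would do this by the standard Poincaré / canonical-path method (\cite[Section 13.4]{LPWMCMT} or \cite{jerrum2004elementary}): for each pair $(u,v)$ of states in $\Upsilon_N$ fix a monotone lattice path $\gamma_{u,v}$ from $u$ to $v$ that first moves both coordinates inwards to the corner region and then outwards to $v$ (an "L-shaped" path through the minimum of the potential), and bound the congestion $\max_{e}\frac{1}{\bar\pi(e)\bar r(e)}\sum_{\gamma_{u,v}\ni e}\bar\pi(u)\bar\pi(v)|\gamma_{u,v}|$. The point is that the edge weight $\bar\pi(e)\bar r(e)$ appearing in the denominator is, up to a constant, $\min(\bar\pi(e^-),\bar\pi(e^+))$ times a rate which is bounded below by a constant (the averaged corner-flip rates are $\ge c(\gl,\sigma_1)>0$ since they are ratios of partition functions of comparable size — this is where one invokes the lower bounds in Proposition \ref{th:zerocase} and \eqref{asymppinning}), while the numerator is dominated by $\bar\pi(u)\bar\pi(v)$; using the geometric decay, the sum of $\bar\pi(v)$ over all $v$ whose canonical path crosses a fixed edge $e$ is bounded by $C\bar\pi(e^-)$ (a geometric series), and similarly for $u$. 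This gives congestion bounded by a constant depending only on $(\gl,\sigma_1)$, hence $\overline\Gap_N(\gl,\sigma)\ge C(\gl,\sigma_1)>0$, uniformly in $N$.

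For the second, weaker claim \eqref{notopti} with $\sigma\le\sigma_0$, the potential $V$ is still non-increasing but may become \emph{flat} near $\beta=1$ (the degenerate case $G(\sigma_0)+\sigma_0 G'(\sigma_0)=F(\gl)$ allows $V'(1)=0$), so the geometric-decay input degrades to a polynomial one: the ratio $\bar\pi(x',y')/\bar\pi(x,y)$ can be as large as $1-c/N$ rather than bounded away from $1$, and the $\sqrt{N}$-type prefactors from Proposition \ref{th:zerocase} and \eqref{asymppinning} also enter. Redoing the canonical-path estimate with these polynomial ingredients, the congestion along an $O(N)$-length path through a region of size $O(N)$ where the measure varies by only polynomial factors produces a bound of order $N^{4}$ (one factor $N$ for path length, one for the size of the crossing set, and two from the prefactor/rate losses), giving $\overline\Gap_N(\gl,\sigma)\ge C(\gl)N^{-4}$. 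I expect the \textbf{main obstacle} to be the careful bookkeeping in the degenerate $\sigma\le\sigma_0$ case: one must check that near the flat part of $V$ the measure $\bar\pi$ really does only vary polynomially (using the uniform-in-$\sigma$ estimate \eqref{sigmazero} with its $(\sigma\vee N^{-1/2})^2$ correction, applied to blocks whose rescaled tilt $\tfrac{n\sigma}{N}$ can be small), and that the averaged reduced rates $\bar r$ do not themselves degenerate faster than polynomially — both of which rely essentially on the \emph{uniformity in $\sigma$} built into Proposition \ref{th:zerocase}, which is exactly why that proposition was proved in that strengthened form.
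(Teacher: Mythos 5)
Your identification of the key input is right: the monotonicity $G(u\sigma)+u\sigma G'(u\sigma)<F(\gl)$ for $\sigma\le\sigma_1<\sigma_0$, translated through the block partition-function estimates into a uniform geometric decay of $\bar\pi$ away from the fully pinned corner, together with the uniform lower bound on the reduced rates, is exactly what the paper uses (its \eqref{defbarp}--\eqref{borne} and \eqref{minrate}). (Minor slip: with your $V(\beta)=-\beta G(\beta\sigma)-(1-\beta)F(\gl)$ the potential is \emph{increasing} on $[0,1]$ when $\sigma<\sigma_0$, not decreasing; your operational conclusion about the direction of decay of $\bar\pi$ is nevertheless correct.)

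The genuine gap is in the second step. The unweighted canonical-path (congestion) bound you invoke cannot produce an $N$-independent constant here: whether the path length sits inside the congestion sum or multiplies it afterwards (as in \cite[Corollary 13.21]{LPWMCMT}, which is the form used elsewhere in this paper), the diameter of $\Upsilon_N$ is of order $N$ and the method loses that factor. Concretely, for an edge $e$ at distance $d$ from the corner one gets $B(e)\le C(d+1)$ even under perfect geometric decay (the pairs $(u,v)$ crossing $e$ have $|\gamma_{u,v}|\ge d$, and the geometric weight of $u$ only compensates the tail, not the length), so $\max_e B(e)=O(N)$ and the method yields $\overline\Gap_N\ge c/N$ --- the same phenomenon as for the biased walk on a segment, whose gap is constant but whose unweighted path bound is $c/n$. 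This is not a cosmetic loss: the constant lower bound is precisely what allows the induction behind Proposition \ref{th:maininductionstep} and Proposition \ref{th:upreltimestable} to close without losing a factor of $N$ at each of the $\log_2 N$ doubling steps (a $c/N$ bound would accumulate to an $e^{c(\log N)^2}$ correction, which the paper explicitly sets out to avoid). To repair the argument you would need either the weighted-path (Diaconis--Stroock) variant with weights adapted to the geometric profile, or the route the paper actually takes: bound the Cheeger constant $\chi$ of the reduced chain from below by a constant --- by showing, via the partial order $(x',y')\succcurlyeq(x,y)$ and the summable geometric tails \eqref{sumbound}, that for any admissible $A$ the measure $\bar p(A)$ is comparable to that of its inner boundary $A'$ --- and then apply $\overline\Gap_N\ge\chi^2/2$. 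Your treatment of the degenerate case $\sigma\le\sigma_0$ is unaffected, since there a polynomial loss is tolerated and even the $c/N$ bound would imply \eqref{notopti}.
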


\begin{rem}
The  exponent $4$ appearing in \eqref{notopti} is not optimal and a closer analysis would show that the spectral gap is of order $N^{-1}$ in that case. We have choosen to aim for a simpler proof since we do not aim for an explicit exponent in Proposition \ref{th:upreltimestable}.
\end{rem}
\begin{proof}[Proof of Proposition \ref{th:gapreducedstable}]
Consider the order on $\Upsilon_N$
which is induced by the inclusion order for the interval $[x,y]$ that is
$$ (x',y') \succcurlyeq (x,y) \quad \text{ if } x'\le x \text{ and } y'\ge y.$$ 
We are in fact going to prove a lower bound on the Cheeger constant associated with the dynamics, which is defined by
\begin{equation}
 \chi:= \min_{A\subset \Upsilon_N \ : \ \bar \pi(A)\le 1/2} \frac{
 \sum_{(x,y)\in A, (x',y')\in A^{\complement}} \bar \pi(x',y')\bar r_N[ (x',y'),(x,y)]}{\bar \pi(A)}.
\end{equation}
In fact we are going to prove a lower bound on 
\begin{equation}
 \chi':= \min_{A\subset \Upsilon_N \ : \ (x_0,y_0)\notin A} \frac{
 \sum_{(x,y)\in A, (x',y')\in A^{\complement}} \bar \pi(x',y')\bar r_N[ (x',y'),(x,y)]}{\bar \pi(A)}.
\end{equation}
where $(x_0,y_0)$ is the minimal element with positive propability 
in $\Upsilon_N$ (which is either $(N/2,N/2)$ or $((N-1)/2,(N+1)/2)$) for the order considered above. It is easy to check that $\chi\ge \chi'$ since the numerator of the minimized quantity is unchanged when $A$ is replaced by $A^{\complement}$.
Now from the above observation and \cite[Theorem 13.10]{LPWMCMT}
we have
\begin{equation}\label{cheegergap}
 \overline\Gap_N(\gl, \sigma)\ge (\chi')^2/2.
 \end{equation}
We are going to use an approximation for $\bar \pi$. 
We set 
\begin{equation}\label{defbarp}
\bar p(x,y):=e^{-2(y-x)F(\gl) + 2(y-x)G\left(\frac{\sigma(y-x)}{N}\right)} (y-x+1)^{-3/2} \left(\frac{\sigma^2 (y-x+1){3}}{N^2}\vee 1 \right).
\end{equation}
We have by Propositions \ref{th:zerocase} and \ref{th:upbpinningstr} that for some constant $C_1(\gl)$
\begin{equation}
 C_1(\gl)^{-1} \le \frac{\bar \pi((x,y))}{ \bar p((x,y))} \le C_1(\gl).
\end{equation}
Since we also have
 \begin{equation}\label{minrate}
\inf_{x,y} \bar{r}_N\left((x,y), (x \pm 1,y \pm 1)\right)\ge r^*(\gl,\sigma_1)>0,
 \end{equation}
this implies that 
\begin{equation}\label{lowphi}
 \chi'\ge r^* C_1^{-2}\min_{A\subset \Upsilon_N \ : \ (x_0,y_0)\notin A} \frac{
 \sum_{(x,y)\in A, (x',y')\in A^{\complement}} \bar p(x',y')\ind_{\{|x-x'|+|y-y'|=1\}}}{\bar p(A)}.
\end{equation}
Now for every $x$ and $y$
\begin{equation}\label{borne}
 \!\! \min \left[\log \left(\frac{\bar p(x+1,y)}{\bar p(x,y)}\right),  \log \left(\frac{\bar p(x,y-1)}{\bar p(x,y)}\right)\right]\ge 2\left[ \!  F(\gl)-\sigma_1 G'(\sigma_1)-G(\sigma_1) \! \right]=:\gamma(\gl,\sigma_1).
\end{equation}
Hence we have
\begin{equation}\label{sumbound}
 \sum_{(x',y') \succcurlyeq (x,y) } \bar p(x',y') \le  (1- e^{-\gamma})^{-2} \bar p(x,y).
\end{equation}
Now given $A$ such that $(x_0, y_0)\notin A$. We let $A'$ denote the set of points which are immediate inferior neighbor of a point in $A$,
\begin{equation}
  A':= \{ (x,y)\in A^{\complement} : \{(x-1,y),(x,y+1)\} \cap A   \neq \emptyset \}.
 \end{equation}
Since (by immediate induction) for $(x,y)\in A$ there is $(x',y')\in A'$ such that $(x',y') \preccurlyeq (x,y)$, then \eqref{sumbound} implies that 
\begin{equation}
 \bar p(A)\le (1- e^{-\gamma})^{-2} \bar p(A').
\end{equation}
On the other hand we have
\begin{equation}
 \sum_{(x,y)\in A, (x',y')\in A^{\complement}} \bar p(x',y')\ind_{\{|x-x'|+|y-y'|=1\}}\ge \bar p(A').
\end{equation}
In view of \eqref{lowphi} and \eqref{cheegergap} this implies that 
$$  \overline\Gap_N(\gl,\sigma) \ge (C_1[1- e^{-\gamma}])^{-4} (r^*)^2 /2.$$
In the case  where 
$G(\sigma)+\sigma G'(\sigma)= F(\lambda)$, then we simply need to replace 
$ (1- e^{\gamma})^{-2}$ by $N^2$ in \eqref{sumbound} and we obtain that 
$$  \overline\Gap_N(\gl,\sigma)\ge (C_1 N)^{-4} (r^*)^2 /2.$$
 \end{proof}

\subsection{Proof of Proposition \ref{th:upreltimebottleneck}} 

Let us now prove that the lower bound proved in Proposition \ref{th:lowbdRel}
using a simple bottleneck argument is sharp up to polynomial correction.
Our starting point is to apply Proposition \ref{th:generaljerrum} 
considering this time the partition in two 
$\gO_N= \cE_N^1\sqcup\cE_N^2$.
We let $\Gap_{N,i}$ be the spectral gap of the Markov chain restricted  to $\cE_N^i$ for $i=1,2$ and
and let $\overline \Gap_{1,2}$ denote the  spectral gap of the reduced chain on $\{1,2 \}.$
Using the fact that for every $\xi \in \gO_N$, 
\begin{equation}
 \sum_{\xi'\in \gO_N} r_N(\xi,\xi')\le 2N,
\end{equation}
we have
 \begin{equation}\label{anotherjerrum}
 \Gap_N(\gl, \gs) \ge \min \left ( \frac{1}{3} \overline \Gap_{1,2}, \frac{\overline \Gap_{1,2} \min_{i \in\{ 1,2\}}\Gap_{N,i}}{\overline \Gap_{1,2} +6N} \right).
 \end{equation}
The quantity $\overline \Gap_{1,2}$ corresponds exactly to 
$\cE(f)/\Var_{\mu_N}(f)$ with $f=\ind_{\cE^1_N}$, which was estimated in Equation \ref{lowbdRel1step}. 
The main task in our proof is thus to show that $\Gap_{N,i}$  decays only like a power of $N$, or in other words, that the chains restricted to each of the potential wells mix rapidly. This corresponds to the following two propositions:

\begin{proposition}\label{fast1}
 There exists $c(\gl)>0$ and $C(\gl,\sigma)$ such that for all $N \ge 2$, we have
 \begin{equation}
 \Gap_{N,1} \ge c(\gl) N^{-C(\gl,\sigma)}.
 \end{equation}
Moreover $C(\gl,\sigma)$ can be chosen to be increasing in $\sigma$.
\end{proposition}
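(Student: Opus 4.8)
The plan is to bound the spectral gap of the chain restricted to $\cE_N^1 = \{\xi : L_{\max}(\xi) \le \beta^* N\}$ by the same chain-decomposition machinery of Proposition \ref{th:generaljerrum} that was used for $\Gap_N(\gl,\sigma)$ itself. First I would re-run the construction of Section \ref{sec:decompstrat}: partition $\cE_N^1$ according to the positions $(L(\xi),R(\xi))=(2x,2y)$ of the last contact before $N$ and the first contact after $N$, giving $\cE_N^1 = \sqcup_{(x,y)\in\Upsilon_N'} \gO_{(x,y)}^1$, where now the index set $\Upsilon_N'$ only contains pairs with $y-x \le \beta^* N/2$ (so that the middle excursion is not itself too long) — but one must also keep track of large excursions on the two \emph{flanks} $\lint 0,2x\rint$ and $\lint 2y,2N\rint$, since the constraint $L_{\max}(\xi)\le\beta^*N$ couples all three pieces. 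The cleanest way around this coupling is to observe that the restricted chain on $\gO_{(x,y)}^1$ is \emph{still} a product of three independent chains: the middle piece on $\lint 2x,2y\rint$ is an unconstrained unpinned (WASEP-type) chain whose gap is $\gtrsim N^{-2}$ by Proposition \ref{th:gapWSEPconstraint}, while each flank piece is the original pinned chain on a smaller segment but now restricted to have no excursion longer than $\beta^*N$; these flank gaps we will handle by a separate induction or comparison.

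Second, I would estimate the reduced chain on $\Upsilon_N'$. Here the key point is that inside $\cE_N^1$, conditioning on $L_{\max}\le\beta^* N$ \emph{removes the metastable well}: the effective potential $V(\beta)=-\beta G(\beta\sigma)-(1-\beta)F(\gl)$ restricted to $\beta\in[0,\beta^*]$ is monotone (it decreases as $\beta$ increases up to its interior local max at $\beta^*$), exactly the situation of Proposition \ref{th:gapreducedstable}, so a Cheeger-type argument as in that proof gives $\overline\Gap \gtrsim N^{-C(\gl)}$ with a polynomial loss coming from the (unoptimized) geometric-series bounds. The maximal exit rate $\bar\gamma$ from the pieces $\gO_{(x,y)}^1$ is $O(N)$ (as in \eqref{anotherjerrum}), so Proposition \ref{th:generaljerrum} then yields $\Gap_{N,1}\ge c(\gl) N^{-C(\gl,\sigma)}/(C(\gl,\sigma)+N)$, i.e.\ a polynomial lower bound; the monotonicity of $C(\gl,\sigma)$ in $\sigma$ is inherited from the monotonicity built into Proposition \ref{th:gapreducedstable} and the flank estimates (larger $\sigma$ only makes the equilibrium weights more tilted and the constants in the renewal estimates of Lemma \ref{th:renewestimate} worse, never better).

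Third, for the flank pieces one needs the spectral gap of the pinned chain on $\lint 1,2m\rint$ with $m\le N$, conditioned to have no excursion exceeding $\beta^* N$. Since $\beta^* N\ge \beta^* m$ always, this constrained chain dominates (in the sense of having a larger gap, after a comparison of Dirichlet forms) the chain on $\lint 1,2m\rint$ conditioned to have no excursion exceeding $\beta^* m$ — which is structurally the same problem one size down. So I would set this up as an induction on $N$ exactly parallel to Proposition \ref{th:maininductionstep}: assume $\Gap_{n,1}\ge c(\gl) n^{-C(\gl,\sigma)}$ for all $n<N$, plug it into the restricted-chain bound \eqref{gap:threeindep}-style decomposition, and close the induction. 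The base case is trivial since $\#\gO_2=2$.

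The main obstacle I anticipate is the bookkeeping around the constraint $L_{\max}\le\beta^* N$ being a \emph{global} constraint that does not factorize over the three pieces of the product decomposition: a configuration can fail to be in $\cE_N^1$ because of a long excursion entirely inside one flank, even when the middle excursion is short. The honest fix is either (a) to further refine the partition so that each block additionally records whether each flank already contains its own near-maximal excursion — at the cost of a messier $\Upsilon_N'$ but with the product structure preserved within each block — or (b) to note that on the block $\gO_{(x,y)}^1$ the constraint on the middle excursion ($y-x\le\beta^*N/2$) is automatic from the definition of $(x,y)$, and the constraints on the two flanks are \emph{independent} events (they involve disjoint coordinates), so $\gO_{(x,y)}^1$ genuinely is a product of a middle WASEP chain and two independently-constrained pinned chains, and the equilibrium measure factorizes accordingly. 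Option (b) is what I would pursue, since it keeps the induction clean; verifying that the restricted dynamics on $\gO_{(x,y)}^1$ really is this product (no forbidden transitions sneak across the two contact points $2x$ and $2y$) is the one point that needs care, and it follows because any corner flip that would move $L(\xi)$ or $R(\xi)$ leaves the block, hence is suppressed in the restricted chain, so within a block the three segments evolve autonomously.
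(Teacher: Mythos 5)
Your overall architecture is the paper's: partition $\cE_N^1$ by $(L(\xi),R(\xi))$, exploit the product structure of each block, bound the reduced chain by the Cheeger argument of Proposition \ref{th:gapreducedstable} (which works precisely because the constraint $y-x\le\beta^*N$ keeps the effective potential monotone), and handle the flanks by induction. But two of your steps do not work as stated.

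First, the flank comparison. You propose to dominate the flank chain (length $m$, excursions capped at $\beta^*N\ge\beta^*m$) by the chain with excursions capped at $\beta^*m$, ``after a comparison of Dirichlet forms''. There is no such general monotonicity: relaxing the constraint changes both the state space and the equilibrium measure, and conditioning on a larger set can decrease the spectral gap. The paper needs no comparison at all, because of the scaling identity $N\beta^*(\sigma)=x\,\beta^*(\tfrac{x\sigma}{N})$ (the product $\sigma\beta^*(\sigma)$ depends only on $\gl$): the flank of length $x$ carries the tilt $\tfrac{x\sigma}{N}$, and under that tilt the cap $\beta^*N$ is \emph{exactly} the cap $x\,\beta^*(\tfrac{x\sigma}{N})$ appropriate to the smaller system. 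So the flank gap is literally $\Gap_{x,1}(\gl,\tfrac{x\sigma}{N})$ — the same object one size down with a halved-or-smaller $\sigma$. This identity is the missing idea.

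Second, the induction bookkeeping. You induct on $N$ while accepting an exit rate $\bar\gamma=O(N)$ and a reduced-chain gap of order $N^{-C}$ at every level. Proposition \ref{th:generaljerrum} then costs a factor $N^{-(C+1)}$ per halving of $N$, and over the $\log_2 N$ levels of such an induction this compounds to $N^{-(C+1)\log_2 N}$, which is superpolynomial. The paper avoids this in two ways: it modifies the chain so that $L$ and $R$ move only to nearest neighbours (as in \eqref{defstar}), making $\bar\gamma\le 4$; and, crucially, it inducts not on $N$ but on the range of $\sigma$ (the statement $\cU_k$ covers $\sigma\le 2^k\sigma_0$), using that the flank tilt $\tfrac{x\sigma}{N}\le\sigma/2$ drops one level of $k$ at each step. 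The $N^{-4}$ reduced-chain loss is therefore incurred only $k\sim\log_2(\sigma/\sigma_0)$ times — a number independent of $N$ — and the induction bottoms out at $\sigma\le\sigma_0$, where $E(\gl,\sigma)=0$, $\cE_N^1=\gO_N$, and Proposition \ref{th:upreltimestable} already gives the polynomial bound. This structure is also what makes the exponent $C(\gl,\sigma)=\tilde C(\gl)+4k$ increasing in $\sigma$; your appeal to monotonicity of the constants in Lemma \ref{th:renewestimate} does not by itself produce it.
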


\begin{proposition}\label{fast2}
 There exists $c(\gl,\gs)>0$ such that for all $N \ge 2$, we have
\begin{equation}
\Gap_{N,2} \ge c(\gl,\gs) N^{-C(\gl,\sigma)}.
\end{equation}
\end{proposition}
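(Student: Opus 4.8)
The plan is to mimic, inside $\cE_N^2$, the induction-and-decomposition machinery developed in Section \ref{sec:upbrel} for the global chain, adapted to the constraint $L_{\max}(\xi)>\beta^*N$. Recall that a configuration in $\cE_N^2$ has a distinguished excursion of half-length $>\beta^*N$; the idea is that this excursion, once it is long enough, stays long (it cannot shrink below $\beta^*N$ without leaving the well), and it is strongly pushed by the area tilt towards the ``delocalized'' shape, so the chain restricted to $\cE_N^2$ has no internal bottleneck. Concretely, I would first localize the big excursion: let $L(\xi)$ and $R(\xi)$ be as in \eqref{leftrightcontacts}, and note that on $\cE_N^2$ the pair $(L(\xi),R(\xi))$ (suitably interpreted — it is the pair of endpoints of the unique long excursion, which for $\beta^*>1/2$ contains the midpoint $N$, and for $\beta^*\le 1/2$ one must instead track the endpoints of the longest excursion wherever it sits) carries the relevant slow coordinate. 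Then apply the chain-decomposition Proposition \ref{th:generaljerrum} with the partition of $\cE_N^2$ according to the position of these endpoints, exactly as in \eqref{inductspace}.

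The three pieces of the product structure \eqref{gap:threeindep} are handled as before: the two ``outside'' segments, of total length $2(1-\beta^*)N(1+o(1))$ at most, are pinned sub-systems on intervals of length $\le N$ to which the \emph{global} induction result (Proposition \ref{th:maininductionstep}, or simply the already-established polynomial bound of Proposition \ref{th:upreltimestable} applied on the smaller box — note $\sigma\cdot(\text{length})/N$ only decreases, so we stay in the rapidly-mixing regime for those pieces) gives $\Gap \ge c(\gl)(\text{length})^{-C(\gl,\sigma)}$; the middle (unpinned) segment is a constrained WASEP on an interval of length $\le 2N$, with spectral gap $\ge N^{-2}$ up to constants by Proposition \ref{th:gapWSEPconstraint}. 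For the reduced chain — the motion of the excursion endpoints — the key input is that $E(\gl,\sigma)>0$ but we are \emph{inside one well}: restricted to $L_{\max}>\beta^*N$ the effective potential $V(\beta)=-\beta G(\beta\sigma)-(1-\beta)F(\gl)$ is monotone on $[\beta^*,1]$ (it has its local max at $\beta^*$ or at an endpoint, and between $\beta^*$ and the relevant minimum it is strictly monotone), so the same Cheeger-constant argument used in the proof of Proposition \ref{th:gapreducedstable} — comparing $\bar\pi$ with the explicit product-of-$Z_n$ weight, then exploiting the uniform geometric drift of the endpoint coordinates towards the bottom of the well — yields $\overline\Gap \ge c(\gl,\sigma) N^{-C}$ for the reduced chain (a polynomial bound suffices; one need not optimize the exponent). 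Feeding these three bounds into Proposition \ref{th:generaljerrum} gives $\Gap_{N,2}\ge c(\gl,\sigma)N^{-C(\gl,\sigma)}$, as the exit rate $\bar\gamma$ is again $O(1)$ after the same nearest-neighbor modification of the endpoint dynamics as in \eqref{defstar}.

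The main obstacle, and the place where real care is needed, is the bookkeeping of the ``big excursion'' as the slow coordinate when $\beta^*\le 1/2$: then the long excursion need not straddle the midpoint, its location is itself a degree of freedom, and the partition of $\cE_N^2$ must track both the \emph{position} and the \emph{length} of the longest excursion, with the complication that there can be competing long-ish excursions. One resolves this by first showing (using the sharp partition-function estimates of Proposition \ref{th:upasymptareadom}, which force $L_{\max}(\xi)\ge(1-\gep)N$ with overwhelming equilibrium probability) that the chain restricted to $\cE_N^2$ spends almost all its time in the sub-region where there is a \emph{unique} excursion of length $\ge(1-\gep)N$; one then runs the decomposition on that dominant sub-region and controls the (exponentially small equilibrium weight) complement by a crude comparison / restricted-chain argument, so that the extra piece costs at most a further polynomial factor. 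The remaining estimates — the reduced-chain Cheeger bound and the identification of the product structure — are then routine adaptations of the arguments already carried out in Sections \ref{sec:partitionfunction} and \ref{sec:upbrel}.
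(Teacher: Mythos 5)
Your overall architecture (decompose $\cE_N^2$ by the location of the long excursion(s), use Proposition \ref{th:generaljerrum}, exploit the product structure of the restricted chains, and bound the reduced chain polynomially) is the right one, but two of your key steps would fail as written. First, the outside (pinned) segments: you claim they are handled by Proposition \ref{th:upreltimestable} or the global induction because ``$\sigma\cdot(\text{length})/N$ only decreases, so we stay in the rapidly-mixing regime.'' This is false unless $\beta^*\ge 1/2$: an outside segment of length $m$ carries effective tilt $\sigma m/N$, and $E(\gl,\sigma m/N)=0$ is guaranteed only when $m/N\le\beta^*$, whereas $m/N$ can be as large as $1-\beta^*>\beta^*$. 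If such a segment were unconstrained it could itself nucleate a long excursion and be exponentially slow. The correct treatment (and the one the paper uses) is that the outside segments are \emph{constrained} to have no excursion longer than $\beta^*N$, i.e.\ they are $\cE^1$-type restricted chains, and one invokes Proposition \ref{fast1} for them — which is available at this point and is exactly why the paper proves Proposition \ref{fast1} first.

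Second, your resolution of the multi-excursion bookkeeping is not a proof. Discarding the region where there are several competing long excursions on the grounds that it has exponentially small equilibrium weight does not yield a lower bound on the spectral gap: small-measure sets can still be dynamical traps, and the decomposition method requires \emph{every} restricted chain and the reduced chain to mix fast, with no exemption for low-probability blocks. The paper resolves this by enlarging the index set to $\Psi_N$, which records the number and positions of \emph{all} excursions longer than $\beta^*N$ (there are at most $1/\beta^*$ of them), and then proves the reduced chain on $\Psi_N$ has polynomial gap via the path/congestion method (\cite[Corollary 13.21]{LPWMCMT}), with explicit canonical paths that grow the first long excursion leftward, then rightward, merging excursions as they meet, and a congestion estimate driven by the monotonicity of $\beta\mapsto \beta G(\beta\sigma)-\beta F(\gl)$ on $[\beta^*,1]$. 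Your proposed Cheeger/drift argument modeled on Proposition \ref{th:gapreducedstable} relies on the simple partially ordered two-coordinate geometry of $\Upsilon_N$ and does not transfer to the multi-excursion state space without substantial new work; the switch to the path method is precisely what makes this part go through.
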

To prove these results, our strategy will be to use again the chain reduction to simplify the geometry of the state space.

\begin{proof}[Proof of Proposition \ref{th:upreltimebottleneck} from Proposition \ref{fast1} and \ref{fast2}]
 Let $\bar r$ and $\bar \pi$ denote the rates associated to the reduced chain. 
 By the variational formula \eqref{defRelTime} , we have
\begin{equation*}
\overline \Gap_{1,2}=\frac{\bar r(1,2)}{\bar \pi(2)}=\frac{\sum_{\xi\in\mathcal{E}_N^1,\xi'\in\mathcal{E}_N^2} \mu_N(\xi)r_N(\xi,\xi')}{ \mu_N(\cE_N^1)\mu_N \left(\mathcal{E}_N^2\right)}\ge 
\frac{\exp(\frac{2\sigma}{N})}{\lambda+\exp(\frac{2\sigma}{N})} 
 \frac{\mu_N\left(\partial \mathcal{E}_N^1 \right)}{\mu_N \left(\mathcal{E}_N^2\right) \mu_N(\cE_N^1)}.
\end{equation*}
The last inequality comes from the fact that for every $\xi\in \partial \mathcal{E}_N^1$ there is at least one transition to $\mathcal{E}_N^2$, and has rate $\frac{\exp(\frac{2\sigma}{N})}{\lambda+\exp(\frac{2\sigma}{N})}$. Hence from Proposition \ref{estimates} we have 
\begin{equation}\label{linfer}
 \overline \Gap_{1,2}\ge  c(\gl, \gs) \sqrt{N} \exp (-2N E(\gl, \gs)).
\end{equation}

\noindent To conclude, we use \eqref{linfer} together with the results of Propositions 
 \ref{fast1} and \ref{fast2} in \eqref{anotherjerrum}.
 
 \end{proof}

\subsection{Proof of Proposition \ref{fast1}} 

Let us assume by convention that if $E(\gl,\sigma)=0$ then $\cE^1_N=\gO_N$ and 
$\Gap_{N,1}(\gl,\sigma)=\Gap_(\gl,\sigma)$. Since our proof proceeds by an iterative structure similar to that of  Proposition \ref{th:upreltimestable}, we are going to proceed by by induction. Recall the definition \eqref{defsig0}, we are going to prove the following statement (for the constant $\tilde C(\gl)$ given in Proposition \ref{th:upreltimestable}) (which we refer to as $\cU_k$) is valid for all $k\ge 0$ (for a sequence $C_k(\gl)$ that will be specified in the course of the proof)
\begin{equation}\label{inductE1}
 \forall N\ge 1, \quad \forall \sigma \le 2^k \sigma_0, \quad  \Gap_{N,1}(\gl,\sigma)\ge C_k(\gl) N^{-\tilde C(\gl)-4k}.
\end{equation}
The statement for $k=0$ is exactly Proposition \ref{th:upreltimestable}, so there is nothing to prove to start the induction.
Now assuming $\cU_k$ let us  prove   $\cU_{k+1}$.

\medskip

\noindent
Again we replace the rate by restricting the transitions of $L$ and $R$ to neareast neighbor as in \eqref{defstar}. We apply Proposition \ref{th:generaljerrum} to this modified chain  with the partition of $\cE_N^1$ given by
 $\cE_N^1= \sqcup_{(x,y)\in \Upsilon'_N} \gO_{(x,y)}'$ where
\begin{equation}
\begin{gathered}
\gO_{(x,y)}' \colonequals \{ \xi\in \cE_N^1 \ :  \ L(\xi)=2x \text{ and } R(\xi)=2y \},\\
\Upsilon'_N \colonequals \{ (x,y) \ : x,y\in \lint 0,N\rint, 2x\le N \le 2y \text{ and } y-x \le \gb^*N \}.\\
\end{gathered}
\end{equation}
We let $\Gap_{(x,y)}'$ be the spectral gap associated with the Markov chain restricted  to $\gO_{(x,y)}'$
and let $\overline \Gap_{N,1}$ denote the  spectral gap associated with the reduced chain on $\Upsilon'_N$ (whose transition are only $(x,y\pm 1)$ and $(x\pm 1,y)$).
Applying Proposition \ref{th:generaljerrum} we obtain that
\begin{equation}\label{jeeer}
\Gap_{N,1} \ge  \min \left( \frac{\overline \Gap_{N,1}}{3}, \frac{\overline \Gap_{N,1} \min_{\Upsilon_N}\Gap'_{(x,y)}(\gl,\sigma)}{\overline \Gap_{N,1}+12} \right).
\end{equation}
To provide a lower bound on $\overline \Gap_{N, 1}$, we can repeat the proof 
of \eqref{notopti} in Proposition \ref{th:gapreducedstable}.
The important point here is that the probability distribution for the reduced chain is given by
$$\bar \pi_1(x,y):= \frac{Z_x(\gl,\frac{\sigma x}{N})\mu^{\gl,\frac{\sigma x}{N}}_x(L_{\max}\le \beta^* N)  Z_{y-x}(0,\frac{\sigma(y-x)}{N})Z_{N-y}(\gl,\frac{\sigma (N-y)}{N})\mu^{\gl,\frac{\sigma N-y}{N}}_{N-y}(L_{\max}\le \beta^* N)}{\bZ(\cE^1_{N})}.$$
Now we have by a variant Proposition \ref{estimates} (the estimate for $\bZ(\cE_1)$)
we have 
\begin{equation}\label{block}
\frac{1}{C(\gl)} e^{2x F(\gl)}\le Z_x \left(\gl,\frac{\sigma x}{N}\right)\mu^{\gl,\frac{\sigma x}{N}}_x(L_{\max}\le \beta^* N)\le C(\gl,\sigma) 
e^{2x F(\gl)}.
\end{equation}
One needs to check within the proof of Proposition \ref{estimates} that the bounding constant $C$
does not depend on $x$.
The lower bound is easy and is obtained by replacing $\sigma$ by $0$.
For upper bound on the other hand, one only needs to apply the bound \eqref{cruxial} 
(which depends on $\sigma$ but not on $x$ since $N \beta^*(\sigma)= x \beta^*(\frac{\sigma x}{N})$).
Using a similar bound for $\mu^{\gl,\frac{\sigma N-y}{N}}_{N-y}(L_{\max}\le \beta^* N)$
we obtain that $\bar \pi_1(x,y)$ can be replaced by $\bar p(x,y)$ as in the proof of Proposition \ref{th:gapreducedstable} and proceed similarly (here the restriction $y-x\le \beta^* N$ plays a crucial role) to obtain 
\begin{equation}\label{gappreducedEE1}
\forall \sigma\le 2^{k+1} \sigma_0, \quad  \overline \Gap_{N, 1} \ge C'_{k}(\gl)  N^{-4}.
\end{equation}
(the constant depend on $\sigma$ but can be made uniform in the range $ \sigma\le 2^{k+1} \sigma_0$).
Let us now turn to $\Gap'_{(x,y)}$. As in the proof of Proposition \ref{th:maininductionstep}, the dynamic restricted to 
  $\gO_{(x,y)}$ consists in three independent part and thus we have
  \begin{equation}\label{treegap}
   \Gap'_{(x,y)}= \Gap_{x,1}\left(\gl,\frac{x\sigma}{N}\right)\wedge \Gap_{y-x}\left(0,\frac{(y-x)\sigma}{N}\right)
   \wedge  \Gap_{N-y,1}\left(\gl,\frac{(N-y)\sigma}{N}\right).
  \end{equation}
   where we recall that $\Gap_{x,1}\left(\gl,\frac{x\sigma}{N}\right)$ is the spectral gap of the chain restricted to $\{ \xi \in \gO_x: L_{\max}(\xi) \le \gb^* N\}$ (here it is important to notice that $N \beta^*(\sigma)= x \beta^*(\frac{\sigma x}{N})$).  
 Now $\frac{x\sigma}{N}, \frac{(N-y)\sigma}{N}\le 2^{k}\sigma_0$ so that one can apply the induction hypothesis to them. Combining this with Proposition \ref{th:gapWSEPconstraint} 
  we have for every $x,y\in \Upsilon'_{(x,y)}$
  \begin{equation}\label{reestrik}
    \Gap'_{(x,y)}\ge C_k(\gl) N^{-\tilde C(\gl)-4k}.
  \end{equation}
Finally we can conclude that $\cU_{k+1}$ holds combining \eqref{reestrik} and \eqref{gappreducedEE1} and \eqref{jeeer}.

\qed

\subsection{Proof of Proposition \ref{fast2}}
While still relying on the chain decomposition method, the proof of
this result requires a new partition of the state space.
This time we need to trace the location of all the the excursions of size larger than $\beta^* N$.
We define thus 
\begin{equation}
 \Psi_N:= \{ [k, (\ell_i,r_i)^k_{i=1}]  \ : \ k\ge 1   \ ; \forall i\in \lint 1, k\rint,  r_i-\ell_i>\beta^* N, \text{ and } \ell_{i+1}\ge r_i\}.
\end{equation}
Now given $\xi \in \cE^2_N$ we define $k(\xi)$ and $(\ell_i(\xi),r_i(\xi))^{k(\xi)}_{i=1}$ as the number and position of excursions of size larger than $\beta^* N$. Moreover, $\ell_i$ and $r_i$ are the unique increasing sequences that satisfy
\begin{equation}\label{chopping}
 \begin{cases}
 \forall i\in \lint 1, k\rint,\   r_i(\xi)-\ell_i(\xi)>\beta^* N,\\
 \forall i\in \lint 1, k\rint,\   \xi_{2\ell_i}=\xi_{2r_i}=0 \text{ and } \forall x \in \lint  2\ell_i+1,2r_i-1\rint, \  \xi_{x}>0,\\
 \forall x\in \lint 0, N-1 \rint \setminus \{ (\ell_i, r_i)\}^k_{i=1}, \exists y\in \lint x+1,(x+\beta^*N )\wedge N\rint, \  \xi_{2y}=0.
 \end{cases}
\end{equation}
We also define 
\begin{equation}
 \gO_{[k, (\ell_i,r_i)^k_{i=1}] }:= \left\{ \xi\in \cE_N^2 \ : \  [k(\xi),(\ell_i(\xi),r_i(\xi))^{k(\xi)}_{i=1}]= [k,(\ell_i,r_i)^{k}_{i=1}] \right\}.
\end{equation}
We use the letter $\psi$ to denote a generic element of $\Psi_N$. 
In addition, let $\Gap_{\psi}$ denote the spectral gap associated with the Markov chain restricted to $\gO_{\psi }$, and let $\overline \Gap_{N,2}$ denote the spectral gap associated with the reduced chain on $\Psi_N$.
Our result easily follows from the following estimates for the restricted and reduced chains respectively.

\begin{proposition}\label{th:anotherconstrained}  There exist constants $c(\gl)>0$ and $C(\gl, \gs)>0$ such that for all $N \ge 1$,
$$ \min_{\psi\in \Psi_N} \Gap_{\psi}\ge c(\gl) N^{-C(\gl, \gs)}.  $$
\end{proposition}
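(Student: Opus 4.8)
The plan is to exploit the product structure of the restricted chain obtained by cutting the path at the large excursions. Fix $\psi=[k,(\ell_i,r_i)_{i=1}^k]\in\Psi_N$ and set $r_0:=0$, $\ell_{k+1}:=N$. By construction, a configuration $\xi\in\gO_{\psi}$ is entirely determined by its restrictions to the $k$ intervals $\lint 2\ell_i,2r_i\rint$ --- on each of which $\xi$ is a positive excursion of half-length $r_i-\ell_i$ --- and to the (at most $k+1$) complementary intervals $\lint 2r_{i-1},2\ell_i\rint$ --- on each of which $\xi$ is a nearest-neighbor path from $0$ to $0$ all of whose excursions have half-length at most $\beta^*N$. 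Since a corner flip at a coordinate $x$ modifies only $\xi_x$ and its rate $r_N$ depends only on $(\xi_{x-1},\xi_x,\xi_{x+1})$, and since the coordinates $2\ell_i,2r_i$ are frozen equal to $0$ on $\gO_{\psi}$, one checks that the chain restricted to $\gO_{\psi}$ is precisely the Cartesian product of the following factor chains: on each block $\lint 2\ell_i,2r_i\rint$, the single-excursion (that is, $\gl=0$) dynamics on $\gO_{r_i-\ell_i}$ with tilt $\tfrac{(r_i-\ell_i)\sigma}{N}$; and on each nonempty block $\lint 2r_{i-1},2\ell_i\rint$, the pinned dynamics on $\gO_{\ell_i-r_{i-1}}$ with tilt $\tfrac{(\ell_i-r_{i-1})\sigma}{N}$ restricted to $\{L_{\max}\le\beta^*N\}$. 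In particular $\Gap_{\psi}$ equals the minimum of the spectral gaps of these finitely many factor chains.

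It then remains to bound each factor from below uniformly. The single-excursion factors are handled directly by Proposition \ref{th:gapWSEPconstraint}, which gives a gap at least $2\sin^2(\pi/4N)\ge c\,N^{-2}$. For a pinned factor living on a block of half-length $n\le N$ with tilt $n\sigma/N$, the relevant quantity is exactly the gap $\Gap_{n,1}(\gl,n\sigma/N)$ appearing in the proof of Proposition \ref{fast1} (the gap of the chain on $\gO_n$ restricted to $\{L_{\max}\le\beta^*N\}$): indeed, because $\beta^*(\gl,\sigma)\,N=\beta^*(\gl,n\sigma/N)\,n$, the constraint ``$L_{\max}\le\beta^*N$'' read from inside the block is precisely the one used there at tilt $n\sigma/N$. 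Proposition \ref{fast1}, together with the monotonicity of $\sigma\mapsto C(\gl,\sigma)$ asserted there, yields
\begin{equation*}
\Gap_{n,1}\!\left(\gl,\tfrac{n\sigma}{N}\right)\ \ge\ c(\gl)\,n^{-C(\gl,n\sigma/N)}\ \ge\ c(\gl)\,N^{-C(\gl,\sigma)}.
\end{equation*}
Taking the minimum over all factors and enlarging $C(\gl,\sigma)$ if necessary so that $C(\gl,\sigma)\ge 2$ gives $\Gap_{\psi}\ge c(\gl)N^{-C(\gl,\sigma)}$, as claimed; the finitely many small values of $N$ (for which $\beta^*N<1$, or the blocks degenerate, or $\cE^2_N=\emptyset$) are absorbed into the constant $c(\gl)$.

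The only genuinely delicate point, and hence the main obstacle, is the verification of the tensorization of the restricted generator: one must check that no corner flip allowed inside $\gO_{\psi}$ couples two distinct blocks, and that the moves \emph{forbidden} by the restriction to $\gO_{\psi}$ coincide exactly with those excluded in the factor chains. The cases to inspect are a flip at a coordinate adjacent to a frozen zero $2\ell_i$ or $2r_i$ (which either remains a legal move of the neighbouring factor chain or is forbidden by the excursion structure and then also has rate $0$ in that factor chain), and a flip inside a pinned block that would remove a contact and merge two excursions into one of half-length exceeding $\beta^*N$ (forbidden on both sides). Once this bookkeeping is completed, the product identity $\Gap_{\psi}=\min_{\text{blocks}}(\cdots)$ holds and the reduction to Propositions \ref{th:gapWSEPconstraint} and \ref{fast1} is immediate.
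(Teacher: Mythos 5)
Your proposal is correct and follows essentially the same route as the paper: decompose the restricted chain as a product over the excursion blocks $\lint 2\ell_i,2r_i\rint$ and the pinned blocks $\lint 2r_{i-1},2\ell_i\rint$, identify the factor gaps as $\Gap_{r_i-\ell_i}(0,\cdot)$ and $\Gap_{\ell_{i+1}-r_i,1}(\gl,\cdot)$, and conclude via Propositions \ref{th:gapWSEPconstraint} and \ref{fast1}. The points you single out (the identity $N\beta^*(\sigma)=n\beta^*(n\sigma/N)$ making the constraint read correctly inside each block, and the monotonicity of $C(\gl,\sigma)$ in $\sigma$ to uniformize the bound) are exactly the ones the paper relies on.
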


\begin{proposition}\label{th:anothereduced} For all $N \ge 1$, we have
$$ \overline \Gap_{N,2} \ge   
c(\gl,\gs)N^{-3}
. $$
\end{proposition}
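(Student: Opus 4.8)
The plan is to bound $\overline\Gap_{N,2}$ by a routing argument on the reduced chain, in the spirit of the proof of \eqref{notopti} in Proposition \ref{th:gapreducedstable}, the difference being the richer geometry of $\Psi_N$. First I would record the structure of the reduced chain on $\Psi_N$: its reversible measure is $\bar\pi_2(\psi)=\bZ(\gO_\psi)/\bZ(\cE^2_N)$, and the transitions produced by one corner flip are the local moves on $\Psi_N$ --- shifting an endpoint $\ell_i$ or $r_i$ by one (growing or shrinking a macroscopic excursion, with rate bounded below by a positive constant uniformly in $N$, as in \eqref{minrate}), merging two macroscopic excursions that have become adjacent ($\ell_{i+1}=r_i$, again at a rate bounded below), the reverse splitting move, and --- at exponentially small rate --- creating or destroying a macroscopic excursion when the intervening pinned stretch crosses $\beta^*N$.

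Next I would produce a product approximation of $\bZ(\gO_\psi)$: factorizing over the macroscopic excursions and the complementary pinned part as in \eqref{excupath} and invoking Proposition \ref{th:zerocase} together with the bounds of the type \eqref{block} from the proof of Proposition \ref{estimates}, one gets a constant $C(\gl,\sigma)$ with $C^{-1}\le\bZ(\gO_\psi)/\bar p(\psi)\le C$, where
\[
 \bar p(\psi):= e^{2(N-\sum_{i=1}^{k}(r_i-\ell_i))F(\gl)}\prod_{i=1}^{k}\Big(e^{2(r_i-\ell_i)G(\sigma(r_i-\ell_i)/N)}(r_i-\ell_i+1)^{-1/2}\Big).
\]
Since $E(\gl,\sigma)>0$, the convex function $W(u):=uG(\sigma u)+(1-u)F(\gl)$ (convexity as used in \eqref{convexitrick}) has $W'(u)\ge 0$ for $u\ge\beta^*$, and $u\mapsto uG(\sigma u)$ is superadditive (being convex, with value $0$ at $0$, as $G,G'\ge 0$); hence $\bar p$ is non-decreasing along the ``grow'' and ``merge'' moves and is maximal at the single full excursion $\psi^*:=[1,(0,N)]$. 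Combining the displayed estimate with Propositions \ref{estimates} and \ref{th:zerocase} gives $\bar\pi_2(\psi^*)\asymp 1$.

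Finally I would run a canonical-path argument routed through $\psi^*$: for each pair $\psi,\psi'$ choose $\gamma_{\psi,\psi'}$ to be a ``downhill'' path $\psi\rightsquigarrow\psi^*$ (grow and merge moves only) followed by the reverse of $\psi'\rightsquigarrow\psi^*$; since each grow move raises the total unpinned length by one and there are at most $\beta^{*-1}$ excursions, $|\gamma_{\psi,\psi'}|=O(N)$. With $Q(e)=\bar\pi_2(\tau)\bar r(\tau,\tau')$ for $e=(\tau,\tau')$, the congestion bound $\overline\Gap_{N,2}\ge 1/\rho$, $\rho=\max_e Q(e)^{-1}\sum_{\gamma_{\psi,\psi'}\ni e}\bar\pi_2(\psi)\bar\pi_2(\psi')|\gamma_{\psi,\psi'}|$, then reduces to two estimates: that the grow/merge rates are bounded below (so $Q(e)\ge c(\gl,\sigma)\bar\pi_2(\tau)$), and that $\bar\pi_2(\{\psi:\psi\succcurlyeq\tau\text{ in the grow/merge order}\})\le C N^{2}\bar\pi_2(\tau)$ uniformly in $\tau$. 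The latter is the crux, and is where the degeneracy of the drift $W'(u)$ as $u\downarrow\beta^*$ enters: the configurations $\succcurlyeq\tau$ obtained by shrinking a macroscopic excursion that already has length close to $\beta^*N$ carry a weight $\bar p$ comparable to $\bar p(\tau)$, and one must count that there are only polynomially many of them (here it matters that every macroscopic excursion has length $>\beta^*N$, so their number stays bounded by $\beta^{*-1}$, and that shrinking keeps each excursion inside its original footprint). Putting these together gives $\rho=O(N^{3})$ and hence $\overline\Gap_{N,2}\ge c(\gl,\sigma)N^{-3}$; as in the remark following Proposition \ref{th:gapreducedstable} this exponent is crude and not optimal, the remaining work being the just-mentioned count and the verification that the exponentially suppressed creation moves never occur on a canonical path.
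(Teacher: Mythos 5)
Your proposal follows essentially the same route as the paper: canonical paths routed through the single-excursion state $[1,(0,N)]$ via grow-and-merge moves of length $O(N)$, with the congestion ratio controlled by lower bounds on the transition rates and by the monotonicity of the effective energy $u\mapsto uG(\sigma u)+(1-u)F(\gl)$ on $[\beta^*,1]$ together with a polynomial count of the possible origins of an edge. The one step you defer as ``the crux'' --- bounding the mass of configurations whose path crosses a given state by $CN^2$ times the weight of that state --- is exactly the computation the paper performs in \eqref{elemratio}, using the description of $\Psi(e)$ as states whose macroscopic excursions lie inside the footprint of the first excursion of $\tau$, the bound \eqref{deformE2}, and the monotonicity of $H(\beta)=\beta G(\beta\sigma)-\beta F(\gl)$ on $[\beta^*,1]$.
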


\begin{proof}[Proof of Proposition \ref{fast2} using Propositions \ref{th:anotherconstrained} and \ref{th:anothereduced}.]
Applying Proposition \ref{th:generaljerrum} together with the fact that $\sum_{\xi'\in \gO_N} r_N(\xi, \xi')\le 2N$ for all $\xi \in \gO_N$,
we have
\begin{equation}
\Gap_{N,2} \ge \min \left( \frac{\overline \Gap_{N,2}}{3}, \frac{\overline \Gap_{N,2} \min_{\psi\in \Psi_N}\Gap_{\psi}}{\overline \Gap_{N,2}+6N} \right) \ge c'(\gl,\gs)N^{- C'(\gl, \gs)}.
\end{equation}
\end{proof}

\begin{proof}[Proof of Proposition \ref{th:anotherconstrained}]
Note that the chain restricted to  $\gO_{\psi}$ is indeed a product chain since the  respective restrictions of $\eta_t$ to the intervals $(\lint 2\ell_i, 2r_i \rint)_{i=1}^k$ and $(\lint 2r_i, 2\ell_{i+1}\rint)_{i=0}^k$
are independent Markov chains where $r_0 \colonequals 0$ and $\ell_{k+1} \colonequals N$. The spectral gap $\Gap_{[k, (\ell_i,r_i)^k_{i=1}]}$ associated with this restricted chain is thus given by the minimum of these chains.
Furthermore,  the spectral gap of the restricted chain in the interval $ \lint 2 \ell_i, 2r_i \rint$ is $\Gap_{r_i-\ell_i}(0,  \gs \frac{r_i-\ell_i}{N})$, and the spectral gap of the restricted chain in the interval $ \lint  2r_i, 2 \ell_{i+1} \rint$ is
$\Gap_{\ell_{i+1}-r_i,1}(\gl,  \gs \frac{\ell_{i+1}-r_i}{N})$.
Using Propositions \ref{th:gapWSEPconstraint} and \ref{fast1}, we obtain
\begin{equation}\label{gap:subsubE2}
\Gap_{\psi} \ge \min \left(c(\gl)N^{-C(\gl,\gs)}, N^{-2} \right) = c(\gl)N^{-C(\gl,\gs)}.
\end{equation}

\end{proof}

\begin{proof}[Proof of Proposition \ref{th:anothereduced}] In this proof we let $\bar r$ and $\bar \pi$ denote the rates and invariant measure associated to the reduced chain respectively. 
Additionally, define the edge set $\it E$ and the edge flows $Q$ respectively by
\begin{equation}
\begin{gathered}
\textit{E} \colonequals \left\{ \{\psi,  \psi'\} : \bar r (\psi, \psi')>0  \right\},\\
Q\left( \psi,\psi'  \right) \colonequals \bar \pi \left( 
\psi\right) \bar r \left( \psi, \psi'\right)= \bar \pi \left( 
\psi'\right) \bar r \left( \psi', \psi\right).
\end{gathered}
\end{equation} 
In order to get our bound for the spectral gap we are going to rely on the so called ``path method'' (see \cite[Chapter 13]{LPWMCMT} for an introduction to the method and bibliographical remarks).
For two distinct elements $\psi$ and $\psi'$ of $\Psi_N$ we construct a path from $\psi$ to $\psi'$  denoted by $\Gamma(\psi,\psi' )$.
Our paths (whose explicit algorithmic construction is given below) are sequences $(\psi_0,\psi_1,\dots, \psi_{|\gG|})$  elements such that $\psi_0=\psi$, $\psi_{|\gG|}=\psi'$ and any two consecutive elements forms an edge in  $\textit{E}$. We say that $e\in \gG$ if there exists $j\le |\gG|$ such that $\{\psi_{j-1},\psi_j\}=e$.
For $\it{e} \in \it{E}$, we define the congestion ratio over the edge $\it e$ as
\begin{equation}\label{def:congestion}
B(e) \colonequals \frac{1}{Q(\it{e})} \sumtwo{\psi, \psi'\in \Psi_N }{ e \in \Gamma(\psi,\psi')}
\bar \pi \left(\psi \right)  \bar \pi \left( \psi'\right).
\end{equation}

By  \cite[Corollary 13.21]{LPWMCMT}, 
 we have
\begin{equation}\label{congest}
\overline \Gap_{N,2} \ge \left(  \max_{e\in E}(B(\it e)) \max_{\psi,\psi'\in \Psi_N}| \gG(\psi,\psi') |\right)^{-1}.
\end{equation} 
Since we aim for a polynomial bound and the cardinal of $\Psi_N$ is a power of $N$, the length of the path will not be an issue. Our construction must thus aim  minimizing the congestion ratio.

\medskip
To construct a path from  $\psi$ to $\psi'$,
we construct in fact a path from $\psi$ to $[1,(0,N)]$ and from $\psi'$ to $[1,(0,N)]$ and concatenate these two paths (taking the second path in reverse order) to get our full path whose length is at most $2N$.

\medskip

To construct the finite sequence  $[k(j), (\ell_i(j),r_i(j))_{i=1}^{k(j)}]_{j=0}^{J}$ from $\psi$ to $[1,(0,N)]$ we proceed as follows:
\begin{itemize}
\item We set $[k(0), (\ell_i(0),r_i(0))^{k(0)}_{i=1}]= \psi$.
 \item If $\ell_1(j)>0$ then $\ell_1(j+1)= \ell_1(j)-1$ and the other coordinates are unchanged.
 \item If $\ell_1(j)=0$ and $r_1(j)<\ell_2(j)$ (or $r_1(j)<N$ if $k(j)=1$) then $r_1(j+1)=r_1(j)+1$ and the other coordinates are unchanged.
 \item If $\ell_1(j)=0$ and $\ell_2(j)=r_1(j)$ then $k(j+1)=k(j)-1$, $r_1(j+1)=r_2(j)$
 and $r_{i}(j+1)=r_{i+1}(j)$,  $\ell_{i}(j+1)=\ell_{i+1}(j)$ for $i \in \lint 2, k(j)-1\rint$.
 \item We stop the algorithm when one reaches $[1,(0,N)]$.
\end{itemize}
By construction the length of the path satisfies $| \gG(\psi,\psi') |\le 2N$ for any $\psi$ and $\psi'$.
Now we provide an upper bound on $\max_{\it e \in \it E} B(\it e)$ using the precise estimates in Section \ref{sec:partitionfunction} and Section \ref{sec:lowbrel}. By symmetry, given $e$ at the cost of  multiplicative factor $2$, we can only sum over paths for which $e$ belongs to the ``first-half'' of the paths (that linking $\psi$ to  $[1,(0,N)]$ let us call it $\gG_1(\psi)$).
Summing over all possible end points  $\psi'$  we obtain that 
\begin{equation}
 B(e)= \frac{2}{Q(e)} \bar \pi \left( \{\psi  \ :  \ e \in \Gamma_1(\psi)\} \right).
\end{equation}
To control the above quantity we need an explicit description of the set $\Psi(e):=\{\psi  \ :  \ e \in \Gamma_1(\psi)\}$.
Let us say that 
 $\it e= \{[m, (x_i,y_i)^m_{i=1}], [m', (x'_i,y'_i)^{m'}_{i=1}]\}$
 and that $[m, (x_i,y_i)^m_{i=1}]$ is the first state visited on the path to $\gG_1(\psi)$ (with our algorithm which state is visited first does not depend on $\psi$).
We are going to prove the two following inequalities
\begin{equation}\begin{split}\label{2stimz}
 Q(e) &\ge \frac{1}{C(\gl, \gs)} \frac{\bZ(\gO_{[m, (x_i,y_i)^m_{i=1}}])}{\bZ(\cE_N^2)} \\    
 \bar \pi \left( \Psi(e) \right)&
 \le  C(\gl, \gs) \frac{N^2\bZ(\gO_{[m, (x_i,y_i)^m_{i=1}}])}{\bZ(\cE_N^2)} ,
\end{split}\end{equation}
which are then sufficient to conclude using \eqref{congest} and the bound we have for the path length.
For the first one, we just have to check that the rate
$\bar r([m, (x_i,y_i)^m_{i=1}], [m', (x'_i,y'_i)^{m'}_{i=1}] )$ 
 is bounded away from zero (eventhough it is slightly improper since edges are not oriented, we use the shorthand notation $\bar r(e)$ for the rate).
 There are two cases to treat: either the transition $e$ merges two excursions or it enlarges the first one. In the first case we have $\bar r(e)= \frac{\exp(\frac{2\gs}{N})}{\gl+ \exp(\frac{2\gs}{N})}$.
 In the second case, let us assume that $x'_1=x_1-1$ (the case $y'_1=y_1+1$ being identical) we have 
 \begin{equation}
  \bar r(e)= \frac{\exp(\frac{2\gs}{N})}{\gl+ \exp(\frac{2\gs}{N})}
\mu_{N}(\xi_{2(x_1-1)}=0 \ | \ \xi\in \gO_{[m, (x_i,y_i)^m_{i=1}]} )
 \\
 =\frac{\exp(\frac{2\gs}{N})}{\gl+ \exp(\frac{2\gs}{N})} \frac{\gl  \cZ_{x_1-1}\cZ_1}{\cZ_{x_1}},
 \end{equation}
where
we have used the notation
\begin{equation}\label{onemoreZ}
\cZ_n \colonequals Z_n\left(\gl, \gs \frac{n}{N}\right) \mu_{n}^{\gl, \gs\frac{n}{N}}(L_{\max}\le \gb^*N)
\end{equation}
 for  $n \in \lint 1,  N \rint$ and $\cZ_0=1$.
Recalling \eqref{block} we have
 \begin{equation}\label{deformE1}
C(\gl)^{-1} \le   e^{-2n F(\gl)} \cZ_n\le C(\gl, \gs),
 \end{equation}
 and thus we have the desired uniform lower bound for $\bar r (e)$.
 Now let us prove the second estimate in \eqref{2stimz}.
 Note that 
 \begin{equation}
  \Psi(e) \subset\left\{ [n+m-1, (x''_j, y''_j)_{j=1}^n \cup (x_i,y_i)_{i=2}^m] \in \Psi_N: n\ge 1,  x''_1\ge  x_1 \text{ and }   y''_n\le y_1  \right \}.
 \end{equation}
Now we can partition $\Psi(e)$ according to the value of $x''_1$ and $y''_n$ (let us call them $\ell$ and $r$ respectively.
Now for any element of this set we have
 \begin{multline}\label{elemratio}
 \frac{ \bar \pi \left(\Psi(e)\right) \bZ(\cE_N^2)}{\bZ(\gO_{[m, (x_i,y_i)^m_{i=1}}])} =  \sum_{\psi\in \Psi(e)} \frac{\bZ(\gO_{\psi})}{\bZ(\gO_{[m, (x_i, y_i)_{i=1}^m]})}\\
 \le \sumtwo{\ell \ge x_1, r\le y_1}{r-\ell \ge \beta^* N}\frac{\cZ_{\ell} 
 Z_{r-\ell}\left(\gl, \frac{r-\ell}{N} \gs\right) \mu_{r-\ell}^{\gl, \frac{r-\ell}{N}\gs} \left( L_{\max} > \gb^*N \right) \cZ_{x_2-r} }{ \cZ_{x_1} 
 Z_{y_1-x_1}\left(0, \gs\frac{y_1-x_1}{N}\right) \cZ_{x_2-y_1}}.
 \end{multline}
We can apply  Proposition \ref{estimates} to obtain that for any $n\in [\beta^* N,N]$
\begin{equation}\label{deformE2}
Z_n\left(\gl, \gs\frac{n}{N}\right) \mu_{n}^{\gl, \gs\frac{n}{N}}\left(L_{\max}> \gb^*N \right) \le \frac{ C(\gl, \gs)} {\sqrt{N}} e^{2n G(\frac{n}{N} \gs)}.
\end{equation}
We can use \eqref{deformE1} and Proposition \ref{th:zerocase} to estimate the other terms. We obtain then (for a difference constant)
\begin{multline}
\frac{\cZ_{\ell} 
 Z_{r-\ell}\left(\gl, \frac{r-\ell}{N} \gs\right) \mu_{r-\ell}^{\gl, \frac{r-\ell}{N}\gs} \left( L_{\max} > \gb^*N \right) \cZ_{x_2-r} }{ \cZ_{x_1} 
 Z_{y_1-x_1}\left(0, \gs\frac{y_1-x_1}{N}\right) \cZ_{x_2-y_1}}
 \\ \le C(\gl,\sigma)e^{2(\ell-x_1+y_1-r)F(\gl)+2(r-\ell) G(\frac{r-\ell}{N} \gs)- 2(y_1-x_1) G(\frac{y_1-x_1}{N} \gs)}\le  C(\gl,\sigma),
\end{multline}
where in the last inequality is simply due to the monotonicity of the functional
$\beta \mapsto \beta G(\beta\sigma)- \beta F(\gl)=H(\beta)$ on the interval $\left[ \frac{r-\ell}{N},\frac{y_1-x_1}{N}\right]\subset [\beta^*, 1]$. 
Indeed the quantity in the exponent is equal to $2N[   H\left(\frac{r-\ell}{N}\right)-H\left(\frac{y_1-x_1}{N}\right)]$.
 Summing over $\ell$ and $r$ we obtain the desired bound.

\end{proof}

\section{Metastability proof of Theorem \ref{th:metastablemacro}}\label{sec:metastab}

For the proof of  Theorem \ref{th:metastablemacro}, we simply have to use the previously proved estimates and use a general result proved in  \cite{beltran2015martingale}.
We more specifically need a slightly modified version of the statement which we cite from
\cite{lacoin2015mathematical}.

\begin{theorem}[Theorem 5.1 in \cite{lacoin2015mathematical}]\label{th:metastab}
We  consider a sequence of  irreducible reversible Markov chains in the state space $\gO_N$, $\cH_N$ a subset of $\gO_N$ and set $\cH_N^{\complement} \colonequals \gO_N \setminus \cH_N$.
We let $\mu_N$ denote the reversible measure of the chain, $\Gap_N$ the spectral gap of the chain, and $\Gap_{N,\cH_N}$,  $\Gap_{N,\cH_N^{\complement}}$ the spectral gap of the corresponding restricted chains.
Let $\bbP_{\mu_N( \cdot \vert \cH_N)}$ denote the distribution of the Markov chain $(\eta_t)$ with initial distribution $\mu_N( \cdot \vert \cH_N)$.
Let us assume that
 \begin{itemize}
 \item[(1)] $\lim_{N\to \infty}\mu_N(\cH_N)=0$. 
 
 \item[(2)] $\lim_{N\to \infty}\frac{\Gap_N}{ \min( \Gap_{\cH_N},  \Gap_{\cH_N^{\complement}})}=0$.
 \end{itemize}
Then under $\bbP_{\mu_N( \cdot \vert \cH_N)}$ the finite dimensional distribution of the process $\ind_{\cH_N}(\eta_{t T_{\Rel}^N})$ converges to that of a Markov chain
   which starts at $1$ and jumps, at rate one,  to  $0$ where it is absorbed.

\end{theorem}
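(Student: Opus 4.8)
\textbf{Proof plan for Theorem \ref{th:metastab}.}
The strategy is to reduce the statement to the general martingale/metastability criterion of \cite{beltran2015martingale}, which asserts that a sequence of two-state ``trace'' processes converges to the absorbed Markov chain described in the conclusion provided one controls (a) the jump rate of the trace process from $\cH_N$ to $\cH_N^\complement$, normalized by $T_{\Rel}^N$, and (b) a uniform thermalization statement inside each of the two pieces $\cH_N$ and $\cH_N^\complement$. Concretely, I would first recall that the process $\ind_{\cH_N}(\eta_{tT_{\Rel}^N})$ is \emph{not} Markovian, but its ``trace'' on $\cH_N\cup\cH_N^\complement$ (obtained by watching only the visits to these sets) is; the abstract theorem of \cite{beltran2015martingale} gives conditions under which this trace, sped up by $T_{\Rel}^N$, converges to the claimed limiting chain. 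The bulk of the work is to translate hypotheses (1)--(2) into the hypotheses of that abstract result.

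The key steps, in order, would be the following. \emph{Step 1: identify the relevant time scales.} Since $\mu_N(\cH_N)\to 0$ and the chain is reversible, the relaxation time of the full chain satisfies $T_{\Rel}^N\asymp 1/(\text{escape rate from }\cH_N)$; more precisely one uses the standard two-state collapse (as in Proposition \ref{th:generaljerrum} with the partition $\gO_N=\cH_N\sqcup\cH_N^\complement$) to write
\begin{equation*}
\Gap_N \geq \min\left(\frac{\overline\Gap_{1,2}}{3},\ \frac{\overline\Gap_{1,2}\,\min(\Gap_{\cH_N},\Gap_{\cH_N^\complement})}{\overline\Gap_{1,2}+6N}\right),
\end{equation*}
where $\overline\Gap_{1,2}$ is the spectral gap of the reduced two-state chain. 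Hypothesis (2) forces $\Gap_N$ to be comparable to $\overline\Gap_{1,2}$, hence the reduced chain carries the slow mode and $T_{\Rel}^N\asymp \overline\Gap_{1,2}^{-1}$. \emph{Step 2: fast thermalization inside $\cH_N$.} Using (2) again, $\Gap_{\cH_N}\gg \Gap_N$, so the chain restricted to $\cH_N$ equilibrates on a time scale $o(T_{\Rel}^N)$; this is exactly the ``local thermalization'' hypothesis needed to replace the (non-Markovian) indicator process by its Markovian trace and to justify that, on the scale $T_{\Rel}^N$, the process started from $\mu_N(\cdot\mid\cH_N)$ stays close to a quasi-stationary profile while inside $\cH_N$. \emph{Step 3: negligibility of $\cH_N^\complement$ visits and absorption.} Hypothesis (1) says $\mu_N(\cH_N)\to 0$, hence once the trace process reaches $\cH_N^\complement$ it spends (in rescaled time) essentially all its time there; combined with Step 2 for $\Gap_{\cH_N^\complement}$, this yields that $\cH_N^\complement$ behaves as an absorbing state for the limiting dynamics, with no return to $1$. \emph{Step 4: invoke the abstract theorem.} With the escape rate normalized to $1$ by the choice of time scale $T_{\Rel}^N$ (Step 1) and the two thermalization estimates (Steps 2--3), the hypotheses of \cite[Theorem]{beltran2015martingale} are met, giving convergence of the finite-dimensional distributions of $\ind_{\cH_N}(\eta_{tT_{\Rel}^N})$ to the Markov chain that starts at $1$ and jumps to the absorbing state $0$ at rate one.

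The step I expect to be the main obstacle is \emph{Step 1 together with the precise matching of normalizations}: one must show not merely that $T_{\Rel}^N$ and $\overline\Gap_{1,2}^{-1}$ are of the same order, but that the escape rate from $\cH_N$, measured in units of $T_{\Rel}^N$, converges to exactly $1$ (not merely stays bounded), and that the error terms coming from the difference between the true indicator process and its Markovian trace vanish in the limit. This requires quantitative control on how quickly the chain inside $\cH_N$ reaches quasi-equilibrium relative to $1/\overline\Gap_{1,2}$, i.e.\ a genuine separation of scales $\Gap_{\cH_N}/\overline\Gap_{1,2}\to\infty$, which is precisely what hypothesis (2) encodes. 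The remaining steps are then soft consequences of reversibility and the general theory; in particular, since this theorem is quoted verbatim from \cite{lacoin2015mathematical}, the cleanest presentation is to observe that hypotheses (1)--(2) here are literally the hypotheses under which \cite[Theorem 5.1]{lacoin2015mathematical} was established, so the proof reduces to citing that reference (which in turn reduces to \cite{beltran2015martingale}); I would state this explicitly rather than reproving the abstract result, noting that the only model-specific input is the verification of (1)--(2), carried out elsewhere in the paper.
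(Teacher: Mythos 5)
Your proposal is correct and matches the paper's approach exactly: the paper does not prove this statement but imports it verbatim as Theorem 5.1 of \cite{lacoin2015mathematical} (which in turn rests on the martingale criterion of \cite{beltran2015martingale}), and your final recommendation --- cite that reference and only verify hypotheses (1)--(2) model-specifically --- is precisely what the paper does in Section \ref{sec:metastab}. Your sketch of the underlying trace-process argument is a reasonable account of the cited proof, but it is not needed here.
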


 The first condition in Theorem \ref{th:metastab} says that all the mass is concentrated in $\cH_N^{\complement}$, and the second condition says that the time for the dynamics restricted to $\cH_N$ (or $\cH_N^{\complement})$ to relax to local equilibrium is much shorter than that for the dynamics in $\gO_N$ to relax to global equilibrium.
Now we collect all for ingredients for verifying the assumptions in Theorem \ref{th:metastab} to prove Theorem \ref{th:metastablemacro}.

\begin{proof}[Proof of Theorem \ref{th:metastablemacro}] We recall the definition of $\cH_N$ in \eqref{def:cHN}. 
We first check the case $G(\gs) \le F(\gl)$ where $\cH_N= \cE_N^2$. By  \eqref{pindom:excurdecay} and \eqref{equalcase} respectively, we have
\begin{equation}
\begin{cases}
\mu_N(\cE_N^2) \le e^{-C N}, & \mbox{if }  G(\gs) < F(\gl);\\
\mu_N(\cE_N^2) \le  \frac{C}{\sqrt{N}}, & \mbox{if } G(\gs) = F(\gl).
\end{cases}
\end{equation}
Now we turn to the case $G(\gs)>F(\gl)$ where $\cH_N= \cE_N^1$. By \eqref{aredom:excurdecay}, we have
\begin{equation}
\mu_N(\cE_N^1) \le e^{-CN}.
\end{equation} 
We have thus checked the first assumption in Theorem \ref{th:metastab} in every case.
Now we turn to verify the second assumption. By Proposition \ref{fast1} and Proposition \ref{fast2}, we have
$$ \min \left ( \Gap_{\cH_N},  \Gap_{\cH_N^{\complement}} \right)= \min \left( \Gap_{N,1},  \Gap_{N,2} \right)\ge   c(\gl,\sigma) N^{-C(\gl, \gs)}.$$
Moreover, by Proposition \ref{th:lowbdRel} we have
\begin{equation}
\Gap_N \le  C(\gl, \gs) N^2 \exp \left(-2N E(\gl,\gs)\right),
\end{equation}
 which allows us to verify the second assumption in Theorem \ref{th:metastab}. We apply Theorem \ref{th:metastab} to conclude the proof.
\end{proof}

\appendix

\section{Proof of Proposition \ref{th:gapWSEPconstraint}}\label{appdix:WSEP}

Since $\Gap_n(0, \gs)=\Gap_{n-1}(1,\gs \frac{n-1}{n})$ and it is more convenient to deal with $\Gap_n(1, \gs)$,
we focus on the lower bound on $\Gap_{n}(1, \sigma)$ combining the ideas in \cite{labbe2016cutoff} and \cite{caputo2008approach} (since this is not a new argument, our proof while complete, keeps the level of details at  minimum, we refer the readers to \cite[section 3.3]{labbe2016cutoff} and \cite[Section 4]{caputo2008approach} for more details in the computation and intuition).
For $x \in \lint 1, 2n-1 \rint$ and $f: \lint 0, 2n \rint \to \bbR$, set $(\Delta f)(x) \colonequals f(x+1)+f(x-1)-2f(x)$ and
\begin{equation*}
 p \colonequals \frac{ \exp(\tfrac{2\sigma}{n})}{1+\exp(\tfrac{2\sigma}{n})}, \quad q \colonequals 1-p.
\end{equation*}   
For $\xi \in \Omega_n$ and $x \in \lint 1, 2n-1 \rint$ with $f_{\xi}(x)\colonequals ( \frac{q}{p})^{\frac{1}{2}\xi_x}$, a direct computation yields
\begin{equation}\label{generator:WSEPconstraint}
\begin{aligned}
(\cL f_{\cdot}(x))(\xi)
&=\sqrt{pq} (\Delta f_{\xi})(x)-(\sqrt{p}-\sqrt{q})^2 f_{\xi}(x)-(2p-1)\sqrt{\frac{q}{p}} \ind_{ \{\xi_{x-1}=\xi_{x+1}=0 \}}.
\end{aligned}
\end{equation}
In view of \eqref{generator:WSEPconstraint} and \cite[Subsection 3.3]{labbe2016cutoff}, for $\xi \in \gO_n$ we define
 \begin{equation}
 \begin{gathered}
 h_n(\xi) \colonequals -\sum_{x=1}^{2n-1} \left(\frac{q}{p} \right)^{\frac{1}{2}\xi_x} \sin\left(\frac{\pi x}{2n}\right) ,\\
 \Psi(\xi) \colonequals (2p-1)\sqrt{\frac{q}{p}}\sum_{x=1}^{2n-1} \sin \left( \frac{\pi x}{2n} \right) \ind_{\{ \xi_{x-1}=\xi_{x+1}=0 \}}.
\end{gathered} 
 \end{equation}
Moreover, we introduce a natural partial order on $\gO_n \times \gO_n$ as follows
\begin{equation*}
\left ( \xi \le \xi'\right) \quad \Leftrightarrow \quad \left(\forall x \in \lint 1, 2n \rint, \quad \xi_x \le \xi'_x \right),
\end{equation*}
and there is a maximal element and a minimal element in $\gO_n$.
 If $\xi \le \xi'$, then 
\begin{equation*}
h_n(\xi) \le h_n(\xi') \quad  \text{ and } \quad \Psi(\xi) \ge \Psi(\xi').
\end{equation*}
If $\xi \le \xi'$,  by \eqref{generator:WSEPconstraint} we have
 \begin{equation}
 \begin{aligned}
 (\cL h_n)(\xi')-(\cL h_n)(\xi) &=- \left[ 4 \sqrt{pq} \sin^2\left(\frac{\pi}{4n}\right)+ (\sqrt{p}-\sqrt{q})^2  \right] \left( h_n(\xi')-h_n(\xi) \right) 
   + \Psi(\xi')-\Psi(\xi) \\
  & \le -\left[ 4 \sqrt{pq} \sin^2\left(\frac{\pi}{4n}\right)+ (\sqrt{p}-\sqrt{q})^2  \right] \left( h_n(\xi')-h_n(\xi) \right),
 \end{aligned}
 \end{equation}
 where we have used summation by part in the equality. Let $(\eta_t^{\xi})_{t \ge 0}$ denote the dynamics starting from $\xi \in \gO_n$, and there exists a canonical coupling (c.f. \cite[Appendix A]{labbe2018mixing} with the positive constraint) such that
 \begin{equation*}
 \left( \xi \le \xi'\right) \Rightarrow \left( \forall t \ge 0, \eta_t^{\xi} \le \eta_t^{\xi'} \right).
 \end{equation*}
Therefore, by \cite[Proposition 3]{wilson2004mixing} and the fact that
\begin{equation}
\min_{\xi\le \xi', \xi \neq \xi'} h_n(\xi')-h_n(\xi)>0,
\end{equation}
 we have
\begin{equation}
\begin{aligned}
\Gap_n(1, \sigma) \ge 4 \sqrt{pq} \sin^2\left(\frac{\pi}{4n}\right)+ (\sqrt{p}-\sqrt{q})^2=1-2\sqrt{pq}\left[1-2\sin^2\left(\frac{\pi}{4n}\right)\right] \ge 
2\sin^2 \left(\frac{\pi}{4n}\right),
\end{aligned}
\end{equation}
where  we have used $2 \sqrt{pq} \le 1$ in the last inequality.
\qed

\bibliographystyle{alpha}
\bibliography{library.bib}

\newcommand{\etalchar}[1]{$^{#1}$}
\begin{thebibliography}{GCMC12}

\bibitem[BBHM05]{benjamini2005mixing}
Itai Benjamini, Noam Berger, Christopher Hoffman, and Elchanan Mossel.
\newblock Mixing times of the biased card shuffling and the asymmetric
  exclusion process.
\newblock {\em Transactions of the American Mathematical Society},
  357(8):3013--3029, 2005.

\bibitem[BDH16]{bovier2016metastability}
Anton Bovier and Frank Den~Hollander.
\newblock {\em Metastability: a potential-theoretic approach}, volume 351.
\newblock Springer, 2016.

\bibitem[BL15]{beltran2015martingale}
Johel Beltr{\'a}n and Claudio Landim.
\newblock A martingale approach to metastability.
\newblock {\em Probability Theory and Related Fields}, 161(1-2):267--307, 2015.

\bibitem[CLM{\etalchar{+}}12]{caputo2012polymer}
Pietro Caputo, Hubert Lacoin, Fabio Martinelli, Fran{\c{c}}ois Simenhaus, and
  Fabio~Lucio Toninelli.
\newblock Polymer dynamics in the depinned phase: metastability with
  logarithmic barriers.
\newblock {\em Probability Theory and Related Fields}, 153(3-4):587--641, 2012.

\bibitem[CMT08]{caputo2008approach}
Pietro Caputo, Fabio Martinelli, and Fabio~Lucio Toninelli.
\newblock On the approach to equilibrium for a polymer with adsorption and
  repulsion.
\newblock {\em Electron. J. Probab.}, 13:no. 10, 213--258, 2008.

\bibitem[DCDH11]{de2011metastable}
Jo{\"e}l De~Coninck, Francois Dunlop, and Thierry Huillet.
\newblock Metastable wetting.
\newblock {\em Journal of Statistical Mechanics: Theory and Experiment},
  2011(06):P06013, 2011.

\bibitem[DMPS89]{Demasi89}
Anna De~Masi, Errico Presutti, and E.~Scacciatelli.
\newblock The weakly asymmetric simple exclusion process.
\newblock {\em Annales de l'I.H.P. Probabilit\'es et statistiques},
  25(1):1--38, 1989.

\bibitem[Dur10]{Durrett}
Rick Durrett.
\newblock {\em Probability: theory and examples}, volume~31 of {\em Cambridge
  Series in Statistical and Probabilistic Mathematics}.
\newblock Cambridge University Press, Cambridge, fourth edition, 2010.

\bibitem[Fis84]{fisher1984walks}
Michael~E Fisher.
\newblock Walks, walls, wetting, and melting.
\newblock {\em Journal of Statistical Physics}, 34(5-6):667--729, 1984.

\bibitem[FS05]{ferrari2005}
Patrik~L. Ferrari and Herbert Spohn.
\newblock Constrained brownian motion: Fluctuations away from circular and
  parabolic barriers.
\newblock {\em Ann. Probab.}, 33(4):1302--1325, 07 2005.

\bibitem[GCMC12]{giacomello2012}
Alberto Giacomello, Mauro Chinappi, Simone Meloni, and Carlo~Massimo Casciola.
\newblock Metastable wetting on superhydrophobic surfaces: Continuum and
  atomistic views of the cassie-baxter--wenzel transition.
\newblock {\em Physical review letters}, 109(22):226102, 2012.

\bibitem[Gia07]{GiacominPolymerbk}
Giambattista Giacomin.
\newblock {\em Random polymer models}.
\newblock Imperial College Press, London, 2007.

\bibitem[Gia11]{GiacominPolymerLNM}
Giambattista Giacomin.
\newblock {\em Disorder and critical phenomena through basic probability
  models}, volume 2025 of {\em Lecture Notes in Mathematics}.
\newblock Springer, Heidelberg, 2011.
\newblock Lecture notes from the 40th Probability Summer School held in
  Saint-Flour, 2010, \'{E}cole d'\'{E}t\'{e} de Probabilit\'{e}s de
  Saint-Flour. [Saint-Flour Probability Summer School].

\bibitem[GNS20]{gns2020}
Nina Gantert, Evita Nestoridi, and Dominik Schmid.
\newblock Mixing times for the simple exclusion process with open boundaries,
  2020.

\bibitem[Gä87]{GART88}
Jürgen Gärtner.
\newblock Convergence towards burger's equation and propagation of chaos for
  weakly asymmetric exclusion processes.
\newblock {\em Stochastic Processes and their Applications}, 27:233 -- 260,
  1987.

\bibitem[JSTV04]{jerrum2004elementary}
Mark Jerrum, Jung-Bae Son, Prasad Tetali, and Eric Vigoda.
\newblock Elementary bounds on poincar{\'e} and log-sobolev constants for
  decomposable markov chains.
\newblock {\em The Annals of Applied Probability}, 14(4):1741--1765, 2004.

\bibitem[Lab18]{LabbeWABridge}
Cyril Labb\'{e}.
\newblock On the scaling limits of weakly asymmetric bridges.
\newblock {\em Probab. Surv.}, 15:156--242, 2018.

\bibitem[Lac16]{lacoin2016mixing}
Hubert Lacoin.
\newblock Mixing time and cutoff for the adjacent transposition shuffle and the
  simple exclusion.
\newblock {\em The Annals of Probability}, 44(2):1426--1487, 2016.

\bibitem[Lan19]{landim2019metastable}
Claudio Landim.
\newblock Metastable markov chains.
\newblock {\em Probability Surveys}, 16:143--227, 2019.

\bibitem[LL18]{labbe2018mixing}
Cyril Labb{\'e} and Hubert Lacoin.
\newblock Mixing time and cutoff for the weakly asymmetric simple exclusion
  process.
\newblock {\em arXiv preprint arXiv:1805.12213}, 2018.

\bibitem[LL19]{labbe2016cutoff}
Cyril Labb{\'e} and Hubert Lacoin.
\newblock Cutoff phenomenon for the asymmetric simple exclusion process and the
  biased card shuffling.
\newblock {\em The Annals of Probability}, 47(3):1541--1586, 2019.

\bibitem[LP16]{LevPer16}
David~A. Levin and Yuval Peres.
\newblock Mixing of the exclusion process with small bias.
\newblock {\em J. Stat. Phys.}, 165(6):1036--1050, 2016.

\bibitem[LP17]{LPWMCMT}
David~A. Levin and Yuval Peres.
\newblock {\em Markov chains and mixing times}.
\newblock American Mathematical Society, Providence, RI, 2017.
\newblock Second edition of [ MR2466937], With contributions by Elizabeth L.
  Wilmer, With a chapter on ``Coupling from the past'' by James G. Propp and
  David B. Wilson.

\bibitem[LT15]{lacoin2015mathematical}
Hubert Lacoin and Augusto Teixeira.
\newblock A mathematical perspective on metastable wetting.
\newblock {\em Electron. J. Probab.}, 20:no. 17, 23, 2015.

\bibitem[Pet19]{pete2019probability}
G{\'a}bor Pete.
\newblock Probability and geometry on groups.
\newblock {\em Lecture notes for a graduate course, present version is at
  http://www. math. bme. hu/\~{} gabor/PGG. pdf}, 2019.

\bibitem[Rez91]{rez91}
Fraydoun Rezakhanlou.
\newblock Hydrodynamic limit for attractive particle systems on ${\bf z}^d$.
\newblock {\em Comm. Math. Phys.}, 140(3):417--448, 1991.

\bibitem[Ros81]{Rost81}
Hermann Rost.
\newblock Non-equilibrium behaviour of a many particle process: Density profile
  and local equilibria.
\newblock {\em Zeitschrift f{\"u}r Wahrscheinlichkeitstheorie und Verwandte
  Gebiete}, 58(1):41--53, 1981.

\bibitem[Sch19]{schmid2019mixing}
Dominik Schmid.
\newblock Mixing times for the simple exclusion process in ballistic random
  environment.
\newblock {\em Electronic Journal of Probability}, 24, 2019.

\bibitem[Wil04]{wilson2004mixing}
David~Bruce Wilson.
\newblock Mixing times of lozenge tiling and card shuffling markov chains.
\newblock {\em The Annals of Applied Probability}, 14(1):274--325, 2004.

\bibitem[Yan19]{yang2019cutoff}
Shangjie Yang.
\newblock Cutoff for polymer pinning dynamics in the repulsive phase.
\newblock {\em arXiv preprint arXiv:1909.04635}, 2019.

\end{thebibliography}

\end{document}